\providecommand{\U}[1]{\protect\rule{.1in}{.1in}}
\newtheorem{theorem}{Theorem}[section]
\newtheorem*{acknowledgement*}{Acknowledgements}
\newtheorem{corollary}[theorem]{Corollary}
\newtheorem{definition}[theorem]{Definition}
\newtheorem{lemma}[theorem]{Lemma}
\newtheorem{problem}[theorem]{Problem}
\newtheorem{proposition}[theorem]{Proposition}
\newtheorem{remark}[theorem]{Remark}
\newcommand{\B}{\mathbb{B}}
\newcommand{\tg}{{\tilde g}}
\title[Density problems for second order Sobolev spaces]{Density problems for second order Sobolev spaces and cut-off functions on manifolds with unbounded geometry}
\author[Debora Impera]{Debora Impera}
\address[Debora Impera]{Dipartimento di Scienze Matematiche "Giuseppe Luigi Lagrange", Politecnico di Torino, Corso Duca degli Abruzzi, 24, Torino, Italy, I-10129}
\email{debora.impera@gmail.com}
\author[Michele Rimoldi]{Michele Rimoldi}
\address[Michele Rimoldi]{Dipartimento di Scienze Matematiche "Giuseppe Luigi Lagrange", Politecnico di Torino, Corso Duca degli Abruzzi, 24, Torino, Italy, I-10129}
\email{michele.rimoldi@polito.it}
\author {Giona Veronelli}
\address[Giona Veronelli]{Dipartimento di Matematica e Applicazioni, Universit\`a di Milano Bicocca, via R. Cozzi 53, I-20126 Milano, Italy}
\email{giona.veronelli@unimib.it}
\begin{document}
\begin{abstract}
We consider complete non-compact manifolds with either a sub-quadratic growth of the norm of the Riemann curvature, or a sub-quadratic growth of both the norm of the Ricci curvature and the squared inverse of the injectivity radius. We show the existence on such a manifold of a distance-like function with bounded gradient and mild growth of the Hessian. As a main application, we prove that smooth compactly supported functions are dense in $W^{2,p}$. The result is improved for $p=2$ avoiding both the upper bound on the Ricci tensor, and the injectivity radius assumption. As further applications we prove new disturbed Sobolev and Calder\'on-Zygmund inequalities on manifolds with possibly unbounded curvature and highlight consequences about the validity of the full Omori-Yau maximum principle for the Hessian.
\end{abstract}

\subjclass[2010]{46E35, 53C21}
\keywords{Density problems, Sobolev spaces on manifolds, cut-off functions, disturbed Sobolev inequalities}

\maketitle
\tableofcontents

\section{Introduction and main results}

Let $(M^m,g)$ be a smooth, complete, possibly non-compact, Riemannian manifold without boundary. For $p\in[1,\infty)$ and $k\geq 2$, denote by $W^{k,p}(M)$ the space of functions on $M$ whose (weak) derivatives of order $0$ to $k$ have a finite $L^p$ norm. Moreover, let $W_{0}^{k,p}(M)$ be the closure of $C_{c}^{\infty}(M)$ in $W^{k,p}(M)$.
\medskip

A classical result in geometric analysis states that for any complete Riemannian manifold, $W_{0}^{1,p}(M)=W^{1,p}(M)$ for any $p\in[1,\infty)$, \cite{aubin-bull}. In this paper, we are interested in the following 

\begin{problem}
 Under which (geometric) assumptions on $M$ does one have that $W_{0}^{2,p}(M)=W^{2,p}(M)$?
\end{problem}

Classical results on this topic can be found in \cite{Aubin}, \cite{HebeyCourant} and references therein. In the following proposition we collect the most up-to-date achievements: point (I) was shown by E. Hebey, \cite[Theorem 2.8]{Hebey}; point (II) was proved by B. G\"uneysu in \cite[Proposition III.18]{Guneysu-book}; point (III) is due to L. Bandara, \cite{Bandara} (for an alternative proof see also \cite[Proposition III.18]{Guneysu-book}).

\begin{proposition}\label{StateArtDensPbms}
Let $(M^{m}, g)$ be a complete Riemannian manifold.
\begin{itemize}
\item[(I)] If $|\mathrm{Ric}_{g}|\leq C$ for some constant $C\geq 0$ and $\mathrm{inj}_{g}(M)>0$, then for every $p\in \left[1,\infty\right)$ we have $W_{0}^{2,p}(M)=W^{2,p}(M)$.
\item[(II)] If $|\mathrm{Riem}_{g}|\leq C$ for some constant $C\geq 0$, then for every $p\in \left[1,\infty\right)$ we have $W_{0}^{2,p}(M)=W^{2,p}(M)$.
\item[(III)] If $\mathrm{Ric}_{g}\geq -C$ for some constant $C\geq 0$ (no assumptions on the injectivity radius!) then $W_{0}^{2,2}(M)=W^{2,2}(M)$.
\end{itemize}
\end{proposition}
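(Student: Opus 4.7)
My plan is to treat the three cases via two distinct strategies.

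For (I) and (II), the classical \emph{cut-off method} applies. Given $u \in W^{2,p}(M)$, I would construct a sequence of smooth compactly supported functions $\chi_n \in C_c^\infty(M)$ with $0\leq \chi_n \leq 1$, equal to $1$ on an exhaustion by compact sets, and satisfying
\[
\|\nabla \chi_n\|_{L^\infty(M)} + \|\mathrm{Hess}(\chi_n)\|_{L^\infty(M)} \longrightarrow 0 \quad \text{as } n\to\infty.
\]
The product rule applied to $\mathrm{Hess}(\chi_n u)$, combined with dominated convergence, then yields $\chi_n u \to u$ in $W^{2,p}(M)$, and a routine mollification inside the support produces the required $C_c^\infty$ approximations. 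The real work is the uniform $C^2$ control of $\chi_n$.

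For case (II), I would take $\chi_n = \varphi(r/n)$ with $r(x) = d_g(x, o)$ for a fixed $o \in M$ and $\varphi \in C_c^\infty([0,2))$ equal to $1$ on $[0,1]$. Since $|\mathrm{Riem}_g| \leq C$, Hessian comparison gives $|\mathrm{Hess}(r)| \leq C'(1 + 1/r)$ away from the cut locus, and a Greene--Wu smoothing produces a globally smooth distance-like function with the same bound. Hence $|\nabla \chi_n|$ and $|\mathrm{Hess}(\chi_n)|$ are both $O(1/n)$ as required. For case (I), bounded Ricci only controls $\Delta r$, not the full Hessian, so I would instead invoke Anderson's result: under $|\mathrm{Ric}_g|\leq C$ and $\mathrm{inj}_g(M) > 0$, there exists a uniform atlas of harmonic coordinate charts in which the metric lies in $C^{1,\alpha}$ with uniform estimates. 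Patching local smooth distance-like functions via a uniformly locally finite partition of unity should produce $\chi_n$ with the required $C^2$ bounds.

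Case (III) calls for a different strategy, since without any upper bound on $\mathrm{Ric}_g$ one cannot hope for $L^\infty$ Hessian bounds on cut-offs. My plan is to exploit the Bochner identity
\[
\int_M |\mathrm{Hess}(u)|^2 \, dv_g = \int_M (\Delta u)^2 \, dv_g - \int_M \mathrm{Ric}_g(\nabla u, \nabla u)\, dv_g \quad \text{for } u \in C_c^\infty(M),
\]
which, under $\mathrm{Ric}_g \geq -C$, gives a two-sided equivalence of $\|u\|_{W^{2,2}(M)}$ with $\|u\|_{L^2(M)} + \|\Delta u\|_{L^2(M)}$. This reduces the identification $W_0^{2,2}(M) = W^{2,2}(M)$ to the essential self-adjointness of $\Delta$ on $C_c^\infty(M)$, a classical result under Ricci bounded below (Strichartz), and the content of Bandara's argument. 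The main obstacle, in my view, is case (I): when only the Ricci tensor (not the full Riemann curvature) is bounded, one has no direct Hessian comparison for the distance function, and the harmonic-coordinate patching needed to recover uniform Hessian control on $\chi_n$ is considerably more delicate than in case (II).
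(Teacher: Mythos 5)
Your overall architecture matches the paper's: for (II) construct Hessian cut-off functions and multiply, for (III) exploit the Bochner identity to get an $L^2$ Calder\'on--Zygmund estimate and combine it with a density/self-adjointness input. The paper phrases (III) via Laplacian cut-off functions plus Proposition \ref{PropGP3.6}(a) rather than via essential self-adjointness, but on a complete manifold these two ingredients are interchangeable, and the Bochner computation you write is exactly the mechanism behind $\mathrm{CZ}(2)$ cited from \cite{GuneysuPigola}. One small correction on (III): essential self-adjointness of $\Delta$ on $C^\infty_c(M)$ needs only geodesic completeness (Gaffney, Strichartz); the lower Ricci bound is not needed for that step, only for the Bochner/$\mathrm{CZ}(2)$ inequality.

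Where you depart from the paper, and where your proposal is incomplete, is point (I). The paper does not patch local distance-like functions through a partition of unity; it points to Proposition \ref{Cutoffs}(iv), which comes from the heat-flow construction of \cite{RimoldiVeronelli} (evolving a Greene--Wu distance-like function by the heat equation and using parabolic Schauder estimates in Anderson harmonic charts to get a global Hessian bound), and then concludes by Proposition \ref{PropGP3.6}(b). Your patching idea is closer in spirit to Hebey's original covering argument, but as stated it has a real gap: even with a uniform Anderson atlas, to glue local distance-like functions you need a partition of unity subordinate to a uniformly locally finite cover whose \emph{second} derivatives are uniformly bounded, and you need the locally constructed pieces to agree to second order on overlaps up to controlled errors. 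Neither is automatic, and this is precisely the ``considerable delicacy'' you flag. So for (I) you would either have to flesh out the covering argument (this is essentially what Hebey does, and it is nontrivial) or invoke the heat-flow/harmonic-coordinate exhaustion function as the paper does. For (II), your plan via $\varphi(r/n)$ plus Hessian comparison plus Greene--Wu smoothing is in spirit the Cheeger--Gromov construction the paper cites; just note that producing a globally smooth exhaustion with a two-sided Hessian bound is itself the content of \cite[Lemma~5.3]{CheegerGromov}, not merely a routine smoothing of $r$.
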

\medskip

Often in the applications it is useful to relax the assumptions on the geometry of the manifold, allowing to the bounds on the curvature and on the injectivity radii to be more flexible. The main purpose of this paper is to investigate density problems for second order Sobolev spaces under not necessarily constant bounds on the curvature and (when it is the case) letting the injectivity radii suitably decay at infinity. In particular we obtain the following     

\begin{theorem}\label{th_main1}
Let $(M, g)$ be a complete Riemannian manifold and $o\in M$ a fixed reference point. Set $r(x)\doteq \mathrm{dist}_{g}(x,o)$. Suppose that one of the following set of assumptions holds
\begin{itemize}
\item[(a)] for some $i_0>0$ and $D>0$,
\[
\ |\mathrm{Ric}_{g}|(x)\leq D^2(1+r(x)^2),\quad\mathrm{inj}_{g}(x)\geq \frac{i_0}{D(1+r(x))}>0\quad\mathrm{on}\,\,M.
\]
\item[(b)] for some $D>0$, 
\[
\ |\mathrm{Sect}_{g}|(x)\leq D^2(1+r(x)^2).
\]
\end{itemize}
Then, for every $p\in\left[1, \infty\right)$, we have $W^{2,p}_{0}(M)=W^{2,p}(M)$
\end{theorem}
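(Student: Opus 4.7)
The whole argument rests on producing a smooth \emph{distance-like} function, that is, a smooth exhaustion $\tilde r\in C^\infty(M)$ which is comparable to $r$ and satisfies
\[
|\nabla \tilde r|\leq C_1,\qquad |\mathrm{Hess}\,\tilde r|\leq C_2(1+\tilde r)
\]
pointwise on $M$, for constants $C_1,C_2>0$ depending only on the geometric data. The existence of such $\tilde r$ under either hypothesis (a) or (b) is the main technical contribution announced in the abstract, and I would address it separately from the density argument: under (b), one applies the Hessian comparison theorem to the distance function, which gives $|\mathrm{Hess}\,r|\lesssim 1+r$ in the barrier/distributional sense, and then invokes a Greene--Wu-type smoothing with smoothing radius decaying like $(1+r)^{-1}$ to produce a smooth representative with the same bounds; under (a), the Riemannian distance must be smoothed on harmonic coordinate charts of radius $\sim i_0/(1+r)$, whose existence follows from the injectivity radius lower bound and whose metric coefficients are controlled in $C^{1,\alpha}$ via elliptic estimates from the Ricci bound.

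Granted the existence of $\tilde r$, the density statement is a classical cut-off argument. Fix $\phi\in C^\infty([0,\infty))$ with $\phi\equiv 1$ on $[0,1]$, $\phi\equiv 0$ on $[2,\infty)$, and $\phi',\phi''$ bounded, and set $\chi_n(x)\doteq \phi(\tilde r(x)/n)$. Differentiating twice via the chain rule,
\[
|\nabla \chi_n|\leq \frac{\|\phi'\|_\infty C_1}{n},\qquad |\mathrm{Hess}\,\chi_n|\leq \frac{\|\phi''\|_\infty C_1^2}{n^2}+\frac{\|\phi'\|_\infty}{n}\,|\mathrm{Hess}\,\tilde r|\leq C_3,
\]
uniformly in $n$, where in the last step I use that $\tilde r\leq 2n$ on $\mathrm{supp}(\nabla\chi_n)$, so that $|\mathrm{Hess}\,\tilde r|\leq C_2(1+2n)$. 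For $u\in W^{2,p}(M)$ define $u_n\doteq \chi_n u$, which has compact support, and expand
\[
\nabla u_n=\chi_n\nabla u+u\,\nabla \chi_n,\qquad \mathrm{Hess}\,u_n=\chi_n\,\mathrm{Hess}\,u+2\,\nabla\chi_n\odot\nabla u+u\,\mathrm{Hess}\,\chi_n.
\]
The terms $\chi_n\nabla u$ and $\chi_n\,\mathrm{Hess}\,u$ converge in $L^p$ to $\nabla u$ and $\mathrm{Hess}\,u$ by dominated convergence, while the terms containing $\nabla \chi_n$ vanish in $L^p$ because $\|\nabla\chi_n\|_\infty\leq C/n$ and $u,\nabla u\in L^p$. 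The last term is more delicate: $\mathrm{Hess}\,\chi_n$ is only uniformly bounded and does not decay, but it is supported on the annulus $\{n\leq \tilde r\leq 2n\}$, which exhausts a neighbourhood of infinity; hence $|u\,\mathrm{Hess}\,\chi_n|^p\leq C_3^p|u|^p\mathbf{1}_{\{\tilde r\geq n\}}\to 0$ in $L^1(M)$ by dominated convergence. Summing the estimates yields $u_n\to u$ in $W^{2,p}(M)$.

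It remains to approximate each compactly supported $u_n$ by $C^\infty_c(M)$ functions in $W^{2,p}$, which is a standard matter of a finite partition of unity subordinate to a cover by coordinate balls followed by Euclidean mollification and requires no extra hypothesis on $(M,g)$. The crux of the whole argument, and the main obstacle, is therefore the construction of $\tilde r$ with the prescribed gradient and Hessian bounds. In particular, the delicate balance in assumption (a) between the quadratic upper bound on $|\mathrm{Ric}|$ and the linear decay of the injectivity radius is precisely what is needed to carry out the harmonic-coordinate smoothing so that the resulting Hessian grows no faster than linearly in $r$; under (b), the sectional curvature assumption already encodes both sides of this balance through Jacobi field estimates, which is why no injectivity radius hypothesis is required.
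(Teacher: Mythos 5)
Your cut-off argument (the second half of your plan) is essentially identical to the paper's Lemma~\ref{HessCutOff} combined with Remark~\ref{rmk_dens} / Proposition~\ref{PropGP3.6}(b): one sets $\chi_n=\phi(\tilde r/n)$, observes that the extra factor $1/n$ exactly compensates the linear growth $|\mathrm{Hess}\,\tilde r|\lesssim 1+\tilde r$ on $\mathrm{supp}(\nabla\chi_n)\subset\{\tilde r\leq 2n\}$, and concludes by dominated convergence applied to $(1-\chi_n)$, $\nabla\chi_n$, $\mathrm{Hess}\,\chi_n$ (all uniformly bounded and supported outside $B_n$). You correctly flag that the remaining work is the Meyers--Serrin density of $C^\infty\cap W^{2,p}$ in $W^{2,p}$, for which the paper cites \cite{GuidettiGueneysuPallara}. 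So far so good.

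The genuine gap is in the first half, exactly where you place the ``crux.'' You propose to obtain the controlled exhaustion $\tilde r$ by smoothing the Riemannian distance $r$ itself: under (b) via Hessian comparison plus Greene--Wu-type smoothing at scale $(1+r)^{-1}$, and under (a) via smoothing on harmonic-coordinate charts. Neither sketch closes the central difficulty, which is the cut locus. Under (a) there is \emph{no} sectional curvature bound, so there is no Hessian comparison theorem at all for $r$: the only available comparison from $|\mathrm{Ric}|\lesssim 1+r^2$ is a one-sided bound on the \emph{Laplacian} $\Delta r$, and controlling metric coefficients in harmonic charts does nothing to bound the Hessian of the non-smooth Lipschitz function $r$ --- there is no PDE that $r$ solves to which elliptic estimates apply. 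Under (b), Hessian comparison controls $\mathrm{Hess}\,r$ only \emph{inside} the cut locus of $o$; on and past the cut locus the distributional Hessian has a singular negative part, and hypothesis (b) imposes no lower bound on $\mathrm{inj}_g(M)$ (collapse is allowed), so the cut locus of $o$ can accumulate near every far point. Greene--Wu smoothing preserves convexity (one-sided) but gives no way to preserve a two-sided Hessian bound through such singularities, and choosing a smoothing scale $\sim(1+r)^{-1}$ does not make the singular measure disappear.

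The paper circumvents this by never trying to bound $\mathrm{Hess}\,r$. It starts instead from the Bianchi--Setti exhaustion $h$ (built from a Laplacian comparison argument, which only needs $\mathrm{Ric}\geq-$ sub-quadratic), which is already smooth and solves the semilinear equation $\Delta h=|\nabla h|^2-\theta\tilde r^{2\eta}$ with $|\nabla h|\lesssim r^\eta$ and $|\Delta h|\lesssim r^{2\eta}$. The whole technical content of Sections~\ref{HarmCoordResc}--\ref{ContrExhaustFct} is to bootstrap this Laplacian control to full Hessian control: rescaled harmonic coordinates on balls of radius $\sim(1+r)^{-\eta}$ (from the Ricci and injectivity-radius bounds under (a), or from a local Cartan--Hadamard lift under (b)), pointwise interior Schauder estimates for the equation $\hat g^{ij}\partial^2_{ij}\hat h=\hat f$, a Sobolev embedding to control the H\"older seminorm of $|\nabla h|^2$, and a local Euclidean Calder\'on--Zygmund inequality to bound $\|D^2\hat h\|_{L^p}$. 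This genuinely uses the specific PDE satisfied by $h$, which your smoothed distance function does not have. You correctly identified where the difficulty lies, but the mechanism you propose for resolving it (smoothing $r$) does not work; the paper's actual mechanism (elliptic regularity for the Bianchi--Setti Poisson-type equation) is substantially different and is what makes the theorem go through under the stated weak hypotheses.
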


Moreover, in the special case $p=2$, we can obtain the following improvement of \cite[Theorem 1.1]{Bandara}, where neither an upper bound on the Ricci curvature nor the assumption on the injectivity radii are required. As it is customary in this case the Bochner formula plays a key role.
\begin{theorem}\label{Dens-p=2}
Let $(M,g)$ be a complete Riemannian manifold, $o\in M$, $r(x)\doteq \mathrm{dist}_{g}(o,x)$, and suppose that for some $D>0$
\[
\ \mathrm{Ric}_{g}(x)\geq - D^2 (1+r(x)^2).
\]
Then $W^{2,2}_{0}(M)=W^{2,2}(M)$.
\end{theorem}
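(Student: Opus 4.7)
The plan is to produce global cut-offs $\chi_n \in C_c^\infty(M)$ depending only on the Ricci lower bound, prove $\chi_n u \to u$ in $W^{2,2}(M)$ for every $u \in W^{2,2}(M)$, and then realize each compactly supported $\chi_n u$ as a $W^{2,2}$-limit of $C_c^\infty(M)$ functions through the standard local recipe (partition of unity plus coordinate mollification), which needs no global geometric assumption. For the cut-offs, the Laplacian comparison theorem under $\mathrm{Ric}_g \geq -D^2(1+r^2)$ yields $\Delta r \leq C(1+r)$ in the barrier sense, and a Greene--Wu type smoothing produces $\rho \in C^\infty(M)$ comparable to $r$ with $|\nabla\rho| \leq C$ and $|\Delta\rho| \leq C(1+r)$. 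Setting $\chi_n := \eta(\rho/n)$ for a fixed $\eta \in C^\infty([0,\infty))$ equal to $1$ on $[0,1]$ and $0$ on $[2,\infty)$, one checks uniformly $\|\nabla\chi_n\|_\infty \leq C/n$ and $\|\Delta\chi_n\|_\infty \leq C$, the crucial point being that $\eta'(\rho/n)\Delta\rho/n$ stays bounded on $\mathrm{supp}(\nabla\chi_n) \subset \{\rho \leq 2n\}$; note that no upper curvature bound and no injectivity radius estimate enter here.

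For the convergence I would expand via Leibniz
\[
\mathrm{Hess}(\chi_n u - u) = (\chi_n - 1)\mathrm{Hess}\,u + (d\chi_n \otimes du + du \otimes d\chi_n) + u\,\mathrm{Hess}\,\chi_n,
\]
observe that the first two summands tend to $0$ in $L^2$ by dominated convergence and by $|\nabla\chi_n| \leq C/n$ respectively, and concentrate on $u\,\mathrm{Hess}\,\chi_n$: this is the subtle term, since under just a Ricci lower bound no pointwise control on $\mathrm{Hess}\,\chi_n$ is available. The key idea is to multiply the Bochner identity
\[
\tfrac{1}{2}\Delta|\nabla\chi_n|^2 = |\mathrm{Hess}\,\chi_n|^2 + \langle\nabla\chi_n, \nabla\Delta\chi_n\rangle + \mathrm{Ric}_g(\nabla\chi_n, \nabla\chi_n)
\]
by $u^2$ and integrate (first for $u$ smooth, then for $u \in W^{2,2}$ by local mollification, legitimate because $\chi_n \in C_c^\infty(M)$). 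After two integrations by parts and a Cauchy--Schwarz absorption of the Hessian term, one is left with
\[
\int_M u^2 |\mathrm{Hess}\,\chi_n|^2 \leq C\left(\int_M |\nabla u|^2|\nabla\chi_n|^2 + \int_M u^2(\Delta\chi_n)^2 - \int_M u^2\,\mathrm{Ric}_g(\nabla\chi_n, \nabla\chi_n)\right).
\]

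The three terms on the right all vanish as $n \to \infty$: the first since $|\nabla\chi_n|^2 \leq C/n^2$ and $|\nabla u| \in L^2(M)$; the second since $\Delta\chi_n$ is uniformly bounded and supported on the annulus $\{n \leq \rho \leq 2n\}$ sliding to infinity; the third, estimated in turn by $D^2\int u^2(1+r^2)|\nabla\chi_n|^2$, thanks to the uniform bound $(1+r^2)|\nabla\chi_n|^2 \leq C(1+4n^2)/n^2 \leq C'$ on $\mathrm{supp}(\nabla\chi_n)$ and $u \in L^2(M)$. The heart of the argument, and the main obstacle, is precisely this three-way balance between the scale $n$ of the cut-off, the growth $1+r^2 \sim n^2$ of the Ricci bound on $\mathrm{supp}(\nabla\chi_n)$, and the gradient control $|\nabla\chi_n| \sim 1/n$; it is the quadratic (and not faster) growth of the lower Ricci bound in the hypothesis that makes the three scales match. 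Once $\chi_n u \to u$ in $W^{2,2}(M)$ is established, the local mollification of each compactly supported $\chi_n u$ completes the proof.
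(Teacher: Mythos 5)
Your proposal is correct and follows essentially the same route as the paper: construct compactly supported cut-offs $\chi_n$ with $|\nabla\chi_n|\lesssim 1/n$ and $|\Delta\chi_n|$ uniformly bounded, then control $\int u^2|\mathrm{Hess}\,\chi_n|^2$ by integrating the Bochner identity against $u^2$, integrating by parts twice, absorbing the Kato term, and exploiting the matching scales $|\nabla\chi_n|\sim 1/n$ against $-\mathrm{Ric}\lesssim n^2$ on $\mathrm{supp}(\nabla\chi_n)$. The only cosmetic differences are bookkeeping: the paper takes the cut-offs directly from Bianchi--Setti rather than rederiving them via Laplacian comparison and Greene--Wu smoothing, and it reduces to smooth $f\in C^\infty\cap W^{2,2}$ at the outset via a Meyers--Serrin theorem rather than mollifying the compactly supported products $\chi_n u$ at the end; both are standard and interchangeable.
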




To prove our density results we employ the method introduced in \cite{Guneysu}, \cite{GuneysuPigola}. The key step in the proof is the construction on the manifold of special sequences of cut-off functions, with a suitable control on the gradient and on second order derivatives.  In this regard let us notice that, as a matter of fact, Proposition \ref{StateArtDensPbms} can be seen as a consequence of the existence of such sequences of cut-off functions under the assumptions at hand. More precisely, the result in point (II) uses point (iii) of Proposition \ref{Cutoffs} below, while the result in point (III) can be seen as a consequence of the general criterion (point (a)) given in Proposition \ref{PropGP3.6}, Theorem B in \cite{GuneysuPigola}, and point (i) in Proposition \ref{Cutoffs}. Finally, also point (I) can be proved using the cut-off functions given by point (iv) in Proposition \ref{Cutoffs} together with Proposition \ref{PropGP3.6} (b)\footnote{Note however that the original proof by Hebey uses a different argument based on a delicate covering technique.}.

Note that such cut-off functions can be tailored starting from suitable smooth exhaustion functions whose gradient and Hessian are controlled in terms of explicit functions of the distance from a fixed reference point. Recall that a smooth function $\rho:M\to\mathbb{R}$ on a Riemannian manifold $(M,g)$ is said to be an exhaustion function if, for every $a\in\mathbb{R}$, the sublevel sets $M_{\rho}(a)=\left\{x\in M:\,\rho(x)<a\right\}$ are relatively compact. In this direction, in this paper we prove the following

\begin{theorem}\label{HCOSubQuadr_coro}
Let $(M, g)$ be a complete Riemannian manifold and $o\in M$ a fixed reference point, $r(x)\doteq \mathrm{dist}_{g}(x,o)$.
Suppose that one of the following set of assumptions holds
\begin{itemize}
\item[(a)] for some $0<\eta\leq1$, some $D>0$ and some $i_0>0$,
\[
\ |\mathrm{Ric}_{g}|(x)\leq D^2(1+r(x)^2)^{\eta},\quad\mathrm{inj}_{g}(x)\geq \frac{i_0}{D(1+r(x))^{\eta}}>0\quad\mathrm{on}\,\,M.
\]
\item[(b)] for some $0<\eta\leq1$ and some $D>0$,
\[
\ |\mathrm{Sect}_{g}|(x)\leq D^2(1+r(x)^2)^\eta.
\]
\end{itemize}
Then there exists an exhaustion function $H\in C^{\infty}(M)$ such that for some positive constant $C>1$ independent of $x$ and $o$, we have on $M$ that 
\begin{itemize}
\item[(i)] $H$ is a distance-like function, i.e., $C^{-1}r(x)\leq H(x)\leq C\max\left\{r(x), 1\right\}$;
\item[(ii)] $|\nabla H|(x)\leq C$;
\item[(iii)] $\left|\mathrm{Hess}\,H\right|(x)\leq C\max\{r(x)^{\eta},1\}$.
\end{itemize}
\end{theorem}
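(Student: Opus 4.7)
The plan is to smooth the intrinsic distance function $r(\cdot)=\mathrm{dist}_g(\cdot,o)$ at a base-point-dependent scale $\varepsilon(x)\sim(1+r(x))^{-\eta}$ matching the rate at which the local geometry deteriorates. Since $r$ is $1$-Lipschitz, its gradient is automatically under control; the real issue is second-order behavior and the failure of smoothness across the cut locus $\mathrm{Cut}(o)$. Under assumption (b) one could additionally invoke Hessian comparison (Rauch/Calabi) to obtain $|\mathrm{Hess}\,r|\leq C(1+r)^\eta$ outside $\mathrm{Cut}(o)$, but under (a) only Laplacian comparison is available, so in both cases second derivatives must be built from scratch by mollification.

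For case (b) I would use a Greene--Wu style mollification. The two-sided sectional bound yields via Rauch comparison a conjugate radius of order $(1+r)^{-\eta}$ and, by Cheeger--Gromov--Taylor together with the Bishop--Gromov volume lower bound implied by the same control, an injectivity radius of the same order. Fix a smooth bump $\chi$ supported in $[0,1]$, choose a smoothly varying scale $\varepsilon(x)\leq c(1+r(x))^{-\eta}$, and set
\[
H(x)\doteq\frac{1}{\varepsilon(x)^m}\int_{T_xM}r\bigl(\exp_x(v)\bigr)\,\chi\!\left(\frac{|v|_g}{\varepsilon(x)}\right)dv.
\]
Differentiating under the integral and parallel-transporting to compare nearby tangent spaces, I expect $|H-r|\leq\varepsilon\leq C$ (hence distance-likeness), $|\nabla H|\leq C$ (from the $1$-Lipschitz property of $r$), and $|\mathrm{Hess}\,H|\leq C\varepsilon^{-1}\leq C(1+r)^\eta$, with an extra contribution from the second variation of $\exp_x$ that is of the same order since $|\mathrm{Sect}|\,\varepsilon\sim(1+r)^{2\eta}(1+r)^{-\eta}=(1+r)^\eta$.

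For case (a) exponential coordinates are unusable, so I would invoke the Anderson/Jost--Karcher harmonic coordinate theorem: the two-sided Ricci bound plus the injectivity radius lower bound yield, at each $x\in M$, a harmonic coordinate chart $\psi_x$ on a ball of radius $\rho(x)\sim(1+r(x))^{-\eta}$ in which the metric coefficients have scale-invariant $C^{1,\alpha}$ bounds and are uniformly comparable to $\delta_{ij}$. In such a chart I would mollify $r\circ\psi_x^{-1}$ by ordinary Euclidean convolution at scale $\varepsilon(x)\sim\rho(x)$; standard convolution estimates plus the $C^{1,\alpha}$ metric control transfer the bounds to the manifold with only bounded multiplicative loss, giving the same estimates as in case (b).

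\textbf{The main obstacle} is to patch the local constructions into a single globally smooth function. In case (b) the formula above is intrinsic and hence already global, provided the scale $\varepsilon(x)$ is chosen smoothly with $|\nabla\varepsilon|\lesssim(1+r)^{-\eta-1}$ and $|\mathrm{Hess}\,\varepsilon|\lesssim(1+r)^{-\eta-2}$, which is automatic by smoothly cutting off $(1+r)^{-\eta}$. In case (a), where the harmonic chart depends on the base point, I would introduce a Whitney-type partition of unity $\{\phi_i\}$ subordinate to a cover by balls $B(x_i,\varepsilon(x_i))$ of uniformly finite multiplicity, and set $H=\sum_i\phi_i H_i$ where $H_i$ is the local Euclidean mollification read in the chart at $x_i$. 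The closeness estimate $|H_i-H_j|\leq C\varepsilon$ on overlaps (since both approximate the same $r$) together with $|\nabla^k\phi_i|\leq C\varepsilon^{-k}$ ensures that the contributions from differentiating the partition cancel the constant part and leave a sum that retains the desired order-$\eta$ Hessian growth. The bookkeeping is technical, but no further geometric input is needed beyond the local smoothing lemmas above.
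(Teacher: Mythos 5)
Your approach is genuinely different from the paper's: where you propose to mollify the distance function $r$ at a variable scale $\varepsilon(x)\sim(1+r(x))^{-\eta}$, the paper starts from the \emph{already smooth} exhaustion function $h$ of Bianchi--Setti, which solves a Poisson-type equation $\Delta h=|\nabla h|^2-\theta\tilde r^{2\eta}$, and then extracts the Hessian bound by a pointwise Schauder estimate for this elliptic equation in rescaled harmonic coordinates (combined with a local Calder\'on--Zygmund inequality and a Sobolev embedding to control the semilinear term). In particular, the paper never has to confront the cut locus of $o$, and in case (b) it avoids the injectivity-radius problem by passing to the local Cartan--Hadamard lift $(\bar M,\bar g)$, where an explicit injectivity radius $\geq R/2$ is available by construction, before applying Anderson's harmonic radius estimate.

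There is a fatal gap in your treatment of case (b). You claim that the two-sided bound $|\mathrm{Sect}_g|\leq D^2(1+r^2)^\eta$ together with ``the Bishop--Gromov volume lower bound implied by the same control'' yields $\mathrm{inj}_g(x)\gtrsim(1+r(x))^{-\eta}$. This is false: Bishop--Gromov gives only \emph{upper} bounds on volume ratios, not lower bounds on volumes, and Cheeger--Gromov--Taylor requires a volume lower bound as a separate hypothesis. A flat (or bounded-curvature) cylinder $\mathbb R\times S^1$ with warping factor $\phi(t)=e^{-t^2}$ satisfies $|\mathrm{Sect}_g|\leq C(1+r^2)$ yet has $\mathrm{inj}_g\sim e^{-r^2}$, so under hypothesis (b) the injectivity radius may collapse arbitrarily fast and your mollification scale $\varepsilon(x)$ far exceeds the radius on which $\exp_x$ is a diffeomorphism. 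Without some surrogate for the injectivity radius control, neither the intrinsic mollification formula nor the harmonic-coordinate patching you describe is available. The paper's local Cartan--Hadamard lift is precisely the device that supplies the missing injectivity radius (at the price of working on a non-injective local cover), and your proposal has no replacement for it. A secondary concern, even granting injectivity radius control, is that computing the Hessian of your intrinsic formula $H(x)=\int_{T_xM}r(\exp_x(v))\chi(|v|/\varepsilon(x))\,dv$ entails a second variation of $\exp_x$ in the base point, which in general involves $\nabla\mathrm{Riem}$ rather than just $|\mathrm{Sect}|$; this is exactly why the paper (and Anderson's theory) works in harmonic rather than geodesic normal coordinates. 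Your partition-of-unity argument in case (a) is closer to correct, but the heart of the matter is the local $C^2$ estimate, and there too the paper's elliptic bootstrap off the Bianchi--Setti equation is doing real work that a bare Lipschitz mollification does not obviously replicate.
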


To obtain distance-like exhaustion functions with controlled gradient and Hessian, the previous strategy introduced in \cite{Tam} and adopted also in \cite{RimoldiVeronelli} was the following. One starts with a distance-like function with bounded gradient (which always exists on complete manifolds, \cite{GW}) and let it evolve under the heat flow on $M$. The evolution at a fixed positive time (say $t=1$) preserves the linear growth of the initial datum, as well as the boundedness of the gradient. Moreover Euclidean parabolic Schauder estimates, applied in harmonic coordinates charts of fixed radius centered at any $x\in M$, permit to control the $L^\infty$-norm of the Hessian. However, when the Ricci curvature is unbounded and the injectivity radius is possibly null, the estimates in the heat flow method are difficult to implement, and the parabolic method apparently does not permit to get Theorem \ref{HCOSubQuadr_coro} in its more general assumptions. Accordingly, we use here a different strategy. 

The starting point is a recent result established by D. Bianchi and A. G. Setti, \cite{BianchiSetti}, where exhaustion functions with controlled gradient and Laplacian are constructed on manifolds with Ricci curvature bounded from below by a possibly unbounded non-positive function of the distance from a fixed reference point, without any assumption on the injectivity radius. As in Tam's result, our strategy is then, roughly speaking, to use harmonic coordinates  in order to gain a control on the whole Hessian of these exhaustion functions. An application of elliptic Schauder estimates, Sobolev embeddings and a local Calder\'on-Zygmund inequality permits then to conclude the proof. Note that this latter part is technically more involved than in the parabolic case, since we have to estimate solutions of a semilinear (elliptic) equation instead of a homogenous (parabolic) equation. To deal with the non-uniform bounds on $\mathrm{Ric}$ and $\mathrm{inj}$, everything is done locally, in a suitable ball, with radius decaying at infinity, where we can guarantee the existence of harmonic coordinates with respect to which we have a good control on the metric.

We mention that these techniques can be naturally extended to study density problems for higher order Sobolev spaces on manifolds with unbounded geometry. These results will be presented in the forthcoming paper \cite{HigherOrder}.\medskip

As a further application, the distance-like function $H$ exhibited in Theorem \ref{HCOSubQuadr_coro} permits to deduce the validity of a disturbed Sobolev inequality on non-compact manifolds with possibly unbounded Ricci curvature and possibly vanishing global injectivity radius. It is well known that on a complete non-compact manifold with Ricci curvature  bounded from below and a strictly positive lower bound on $\mathrm{vol}(B_1(x))$ uniform in $x$, one has the continuous embedding $W^{1,p}(M)\subset L^{pm/(m-p)}(M)$, \cite{Varo}. By a result of Croke, the assumption on the volumes of unitary balls is implied by a positive lower bound on the injectivity radius, \cite[Proposition 14]{croke}. Under a conformal change of the Riemannian metric the Ricci curvature modifies following an equation which involves the gradient and the Hessian of the conformal factor. Accordingly, in the assumption of Theorem \ref{HCOSubQuadr_coro} we can use the distance-like function $H$ to get a metric $\tilde g$ in the same conformal class of $(M,g)$ with bounded Ricci curvature and a lower bound on the volumes of unitary balls\footnote{Because of \cite{Mul}, this result is true without curvature and injectivity radius assumptions. The main achievement here is the second order control of the conformal factor.}. Moving back from $\tilde g$ to $g$, we deduce a Sobolev-type inequality on $(M,g)$.

\begin{theorem}\label{th_sob}
Let $(M^m,g)$ be a smooth, complete non-compact Riemannian manifold without boundary. Let $o\in M$, $r(x)\doteq \mathrm{dist}_{g}(x,o)$ and suppose that for some $0<\eta\leq1$, $D>0$ and some $i_{0}>0$,
\begin{equation*}
\ |\mathrm{Ric}_{g}|(x)\leq D^2(1+r(x)^2)^{\eta},\quad\mathrm{inj}_g(x)\geq \frac{i_{0}}{D(1+r(x))^\eta}.
\end{equation*}
Let $p\in[1,m)$ and $q\in [p,mp/(m-p)]$. Then there exist constants $A_1>0$, $A_2>0$, depending on $m$, $p$, $q$ and the constant $C$ from Theorem \ref{HCOSubQuadr_coro}, such that for all $\varphi\in C^\infty_c(M)$ it holds
\begin{align}\label{w-sob}
\left(\int_M |\varphi|^{q}d\mathrm{vol}_{g}\right)^{\frac{1}{q}}
&\leq A_1 \left(\int_M |\nabla \varphi|^p d\mathrm{vol}_{g}\right)^{1/p} + A_2 \left(\int_M \max\{1;r^{2\eta}\} |\varphi|^p d\mathrm{vol}_{g}\right)^{1/p}
\end{align}
\end{theorem}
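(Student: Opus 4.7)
The plan is to change the metric conformally, $\tilde g=e^{2f}g$, choosing $f$ so that $(M,\tilde g)$ has \emph{bounded geometry} in the classical sense (Ricci bounded below and a uniform positive lower bound on the volumes of unit $\tilde g$-balls), to apply the standard Sobolev embedding on $(M,\tilde g)$, and then to translate the resulting inequality back to $(M,g)$ through a weighted substitution. The natural choice of conformal factor, suggested by the comments preceding the theorem and rigged to match the admissible growth of $|\mathrm{Ric}_g|$ and of $\mathrm{inj}_g^{-1}$, is
\[
f:=\tfrac{\eta}{2}\log(1+H^2),\qquad \text{so that}\qquad e^{2f}=(1+H^2)^{\eta}\asymp\max\{1,r^{2\eta}\},
\]
where $H$ is the distance-like exhaustion from Theorem~\ref{HCOSubQuadr_coro}.

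The first task is to use the pointwise bounds $|\nabla H|\le C$ and $|\mathrm{Hess}\,H|\le C\max\{1,r^\eta\}$ to show that $|\nabla f|_g$, $|\mathrm{Hess}\,f|_g$ and hence $|\Delta f|_g$ are \emph{uniformly} bounded on $M$; this is where the hypothesis $\eta\le 1$ enters in a sharp way, the growth $r^\eta$ from $\mathrm{Hess}\,H$ being precisely absorbed by the denominator $(1+H^2)^{-1}\lesssim r^{-2}$. Plugging these bounds into the standard conformal transformation law
\[
\widetilde{\mathrm{Ric}}=\mathrm{Ric}_g-(m-2)\bigl(\mathrm{Hess}\,f-df\otimes df\bigr)-\bigl(\Delta f+(m-2)|\nabla f|^2\bigr)g,
\]
together with $|\mathrm{Ric}_g|\le D^2(1+r^2)^\eta$, one gets $|\widetilde{\mathrm{Ric}}|_g\le C(1+r^2)^\eta$; converting to the $\tilde g$-norm via $|\widetilde{\mathrm{Ric}}|_{\tilde g}=e^{-2f}|\widetilde{\mathrm{Ric}}|_g$ and exploiting $e^{2f}\gtrsim(1+r^2)^\eta$ yields $\widetilde{\mathrm{Ric}}\ge -K\tilde g$ uniformly on $M$. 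Moreover, since the dilation factor $e^f\gtrsim(1+r)^\eta$ compensates the admissible decay of $\mathrm{inj}_g$, one obtains a uniform positive lower bound on $\mathrm{inj}_{\tilde g}$; Croke's inequality then gives $\mathrm{vol}_{\tilde g}(\tilde B_1^{\tilde g}(x))\ge v_0>0$ uniformly in $x$.

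At this stage the classical Varopoulos--Saloff-Coste Sobolev embedding applies on $(M,\tilde g)$: for every $\psi\in C_c^\infty(M)$,
\[
\|\psi\|_{L^{pm/(m-p)}(\tilde g)}\le K_1\|\nabla^{\tilde g}\psi\|_{L^p(\tilde g)}+K_2\|\psi\|_{L^p(\tilde g)}.
\]
Given $\varphi\in C_c^\infty(M)$, I would apply this with the twisted test function $\psi:=\varphi\,e^{\beta f}$, choosing $\beta:=-(m-p)/p$ so that the volume weight $e^{mf}$ in $d\mathrm{vol}_{\tilde g}$ cancels in the left-hand side at the critical exponent $q_*:=pm/(m-p)$. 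Expanding $\nabla\psi=e^{\beta f}(\nabla\varphi+\beta\varphi\nabla f)$ and using the uniform bound on $|\nabla f|_g$ to absorb the cross-term, the gradient term rewrites as a combination of $\|\nabla\varphi\|_{L^p(g)}$ and $\|\varphi\|_{L^p(g)}$, while the lower-order term takes the form $\int e^{pf}|\varphi|^p\,d\mathrm{vol}_g$, which in view of the explicit form of $f$ is controlled by a constant multiple of $\int_M\max\{1,r^{2\eta}\}|\varphi|^p\,d\mathrm{vol}_g$. This produces the desired inequality at $q=q_*$; the general case $q\in[p,q_*]$ then follows by standard H\"older interpolation between $L^p$ and $L^{q_*}$.

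The hardest part will be the sharp Hessian estimate for $f$ in the first step, whose chain-rule expansion produces terms of the schematic shape $H(1+H^2)^{-1}\mathrm{Hess}\,H$ and $H^2(1+H^2)^{-2}\,dH\otimes dH$, and whose uniform boundedness hinges essentially on $\eta\le 1$. The remaining ingredients --- the conformal Ricci formula, Croke's inequality, the Varopoulos--Saloff-Coste embedding, and H\"older interpolation --- are classical and should be routine to assemble.
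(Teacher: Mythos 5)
Your strategy follows the paper's: conformally rescale by a power of the distance-like function $H$ so that the new metric $\tilde g$ has Ricci bounded below and non-collapsed unit balls, apply the Varopoulos--Saloff-Coste (or Hebey's disturbed) Sobolev inequality on $(M,\tilde g)$, and then undo the twist with a substitution $\varphi\mapsto\varphi\,e^{\beta f}$. Your choice $e^{2f}=(1+H^2)^\eta$ is equivalent to the paper's $e^{2\phi}=H^{2\eta}$ (the paper normalizes $H\ge 1$), the Ricci-lower-bound computation is the same, and the back-substitution $\beta=-(m-p)/p$ matches the paper's choice $u=\varphi H^{-m\eta/q}$ at $q=pm/(m-p)$.

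However, there is a genuine gap in the step where you obtain the volume non-collapsing on $(M,\tilde g)$. You argue that ``the dilation factor $e^f\gtrsim(1+r)^\eta$ compensates the admissible decay of $\mathrm{inj}_g$,'' yielding a uniform lower bound on $\mathrm{inj}_{\tilde g}$, and then invoke Croke's inequality for $\tilde g$. This would be correct if the rescaling were by a \emph{constant}, but for a non-constant conformal factor the $\tilde g$-geodesics, conjugate points and cut locus are entirely different from those of $g$, and there is no elementary scaling law relating $\mathrm{inj}_{\tilde g}$ to $e^f\,\mathrm{inj}_g$. The paper explicitly acknowledges (in the discussion after Theorem~\ref{th_CZ_dist}) that $\mathrm{inj}_{\tilde g}$ is \emph{not} known to be bounded below in this construction; nor do we have a sectional-curvature bound on $\tilde g$ from which to derive one by Cheeger's estimate, since only $\mathrm{Ric}_g$ is controlled. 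The way around this, and what the paper actually does, is to apply Croke's inequality to the \emph{original} metric $g$ on balls of the adapted radius $\sim(1+r(x))^{-\eta}$, and then transfer this to a lower bound on $\mathrm{vol}_{\tilde g}(B_1^{\tilde g}(x))$ via a ball-comparison argument (Lemmas~\ref{lem_low-est}--\ref{lem_up-est} and the inclusion $B^g_{\rho r^{-\eta}(x)}(x)\subset B^{\tilde g}_{2\rho}(x)$), together with the pointwise comparison $d\mathrm{vol}_{\tilde g}=e^{m\phi}d\mathrm{vol}_g$. Once you replace your $\mathrm{inj}_{\tilde g}$ step with this Croke-on-$g$-plus-ball-comparison argument, the rest of your outline goes through.

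A minor remark on the subcritical range: you propose H\"older interpolation between $L^p$ and $L^{q_*}$, which works (combined with Young's inequality and the trivial bound $\|\varphi\|_{L^p}\le\|\max\{1,r^{2\eta}\}^{1/p}\varphi\|_{L^p}$); the paper instead applies the standard subcritical Sobolev inequality of Varopoulos/Coulhon--Saloff-Coste directly on $(M,\tilde g)$. Both are fine.
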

Disturbed Sobolev inequalities were obtained in the original paper by Varopoulos, \cite{Varo}, and subsequently improved by Hebey, \cite{Hebey}. Namely, they proved that if $\mathrm{Ric_g}\geq - (m-1)D$ for some positive constant $D$, then
\begin{align}\label{w-sob-HV}
\left(\int_M \varphi^{\frac{mp}{m-p}} v^\alpha d\mathrm{vol}_{g}\right)^{\frac{m-p}{mp}}
&\leq A \left( \int_M  |\nabla \varphi|^p v^\beta d\mathrm{vol}_{g}\right)^{\frac 1p} + B \left( \int_M \varphi^p v^\beta d\mathrm{vol}_{g}\right)^{\frac 1p},
\end{align}
where $1\leq p < m$, $\alpha$ and $\beta$ are real constants satisfying $\beta/p - \alpha(m-p)/(mp) \geq 1/m$, and $v(x)\doteq \left(\mathrm{vol}_{g}(B_1(x))\right)^{-1}$.
Combining \eqref{w-sob-HV} with Theorem \ref{HCOSubQuadr_coro} and the conformal method described above, we get a quite general family of Sobolev-type inequalities which contains the one in Theorem \ref{th_sob} when $q=mp/(m-p)$; see Theorem \ref{th_sob_dist}. As a special case of Theorem \ref{th_sob_dist}, one has also the validity of \eqref{w-sob} provided that $\ |\mathrm{Sect}_{g}|(x)\leq D^2(1+r(x)^2)^{\eta}$ and 
\begin{equation}\label{ass_vol}
\mathrm{vol}_g(B_{r^{-\eta}(x)}(x))\geq \frac{E}{(1+r(x))^{m\eta}}.\end{equation}
It is a natural question whether \eqref{ass_vol} together with $\mathrm{Ric}_g\gtrsim -r^{2\eta} $ would suffice to prove Theorem \ref{th_sob}. Note that  both the upper bound on Ricci and the lower bound on the injectivity radius are used to get harmonic radius estimates. On the other hand, the weight $r^{2\eta}$ in \eqref{w-sob} probably can not be avoided since an unweighted Sobolev inequality would imply a lower bound on $\mathrm{vol}_g(B_1(x))$, \cite{Carron_PhD}.
\medskip

Ideas in the proof of Theorem \ref{th_sob_dist} and Theorem \ref{th_sob} can also be applied to other integral inequalities, permitting for instance to obtain the validity of the following $L^{2}$-Calder\'on-Zygmund inequality with weight in our general assumptions. For some results in the same spirit see also \cite[Section 7]{Amar}.

\begin{theorem}\label{th_CZ_dist}
Let $(M^m,g)$ be a smooth, complete non-compact Riemannian manifold without boundary. Let $o\in M$, $r(x)\doteq \mathrm{dist}_{g}(x,o)$ and suppose  that one of the following curvature assumptions holds
\begin{itemize}
\item[(a)] for some $0<\eta\leq1$, some $D>0$ and some $i_0>0$,
\[
\ |\mathrm{Ric}_{g}|(x)\leq D^2(1+r(x)^2)^{\eta},\quad\mathrm{inj}_{g}(x)\geq \frac{i_0}{D(1+r(x))^{\eta}}>0\quad\mathrm{on}\,\,M.
\]
\item[(b)] for some $0<\eta\leq1$ and some $D>0$,
\[
\ |\mathrm{Sect}_{g}|(x)\leq D^2(1+r(x)^2)^\eta.
\]
\end{itemize}
Then there exist constants $A_1>0$, $A_2>0$, depending on $m$, $\eta$, $D$ and the constant $C$ from Theorem \ref{HCOSubQuadr_coro}, such that for all $\varphi\in C^\infty_c(M)$ it holds
\begin{align}\label{CZ-state}
\||\mathrm{Hess}\,\varphi|_g\|_{L^2}^2 \leq  A_1 \|H^{2\eta}\varphi\|_{L^2}^2 +A_2 \|\Delta \varphi\|_{L^2}^2.
\end{align}
\end{theorem}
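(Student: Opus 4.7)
The plan is to derive the inequality directly from Bochner's formula, using the distance-like function $H$ from Theorem \ref{HCOSubQuadr_coro} as a weight in a subsequent integration by parts (rather than passing through a conformal change as in the proof of Theorem \ref{th_sob}, which for the $L^2$ statement would be unnecessarily indirect). For $\varphi\in C^\infty_c(M)$, integration over $M$ of the pointwise Bochner identity
\[
\tfrac12\Delta|\nabla\varphi|^2 = |\mathrm{Hess}\,\varphi|^2 + \langle\nabla\Delta\varphi,\nabla\varphi\rangle + \mathrm{Ric}(\nabla\varphi,\nabla\varphi),
\]
followed by the standard $\int_M\langle\nabla\Delta\varphi,\nabla\varphi\rangle = -\int_M(\Delta\varphi)^2$, yields the well-known $L^2$ identity $\int_M|\mathrm{Hess}\,\varphi|^2 = \int_M(\Delta\varphi)^2 - \int_M\mathrm{Ric}(\nabla\varphi,\nabla\varphi)$. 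Under (a) one has directly $\mathrm{Ric}_g \geq -D^2(1+r^2)^\eta g$; under (b), the sectional bound gives $|\mathrm{Ric}_g|\leq (m-1)D^2(1+r^2)^\eta$, hence the same lower bound up to a constant. Combining this with property (i) of $H$ (which implies $r\leq CH$), and after replacing $H$ by $H+1$ so that $H\geq 1$ everywhere (an innocuous modification preserving (i)--(iii) with new constants), the problem reduces to
\[
\int_M|\mathrm{Hess}\,\varphi|^2 \leq \int_M(\Delta\varphi)^2 + K_1\int_M H^{2\eta}|\nabla\varphi|^2.
\]

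The next step is a weighted integration by parts. Writing
\[
\int_M H^{2\eta}|\nabla\varphi|^2 = -\int_M H^{2\eta}\varphi\,\Delta\varphi - 2\eta\int_M H^{2\eta-1}\varphi\,\langle\nabla H,\nabla\varphi\rangle,
\]
a Young inequality applied to the first term produces a sum of the form $\tfrac{\delta}{2}\|H^{2\eta}\varphi\|_{L^2}^2 + \tfrac{1}{2\delta}\|\Delta\varphi\|_{L^2}^2$. For the second term, property (ii) ($|\nabla H|\leq C$) together with $H\geq 1$ (so that $H^{2\eta-2}\leq H^{4\eta}$) allows one to split the product $(H^\eta|\nabla\varphi|)\cdot(H^{\eta-1}|\varphi|)$ and obtain a small multiple of $\int_M H^{2\eta}|\nabla\varphi|^2$, which is absorbed on the left, plus a contribution bounded by $\|H^{2\eta}\varphi\|_{L^2}^2$. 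After rearranging,
\[
\int_M H^{2\eta}|\nabla\varphi|^2 \leq K_2\|H^{2\eta}\varphi\|_{L^2}^2 + K_3\|\Delta\varphi\|_{L^2}^2,
\]
and substituting into the Bochner-based bound of the first step yields \eqref{CZ-state} with $A_1 = K_1 K_2$ and $A_2 = 1 + K_1 K_3$.

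The main technical point is organizing the Young step so that the weight arising from differentiating $H^{2\eta}$, namely $H^{2\eta-1}\nabla H$, is controlled by the target weight $H^{4\eta}\varphi^2$ together with a fraction of the left-hand side. This hinges on the lower bound $H\geq 1$, which ensures that the possibly negative power $H^{2\eta-1}$ does not blow up in the small-$\eta$ regime; absent this modification, the argument would require a more delicate localization near the reference point. Notice that the Hessian bound (iii) of Theorem \ref{HCOSubQuadr_coro} plays \emph{no} role in this $L^2$ estimate: only the distance-like behavior (i), the bounded gradient (ii), and the Ricci lower bound implied by (a) or (b) are used. This is consistent with the fact, recalled in Proposition \ref{StateArtDensPbms}(III), that $L^2$ density phenomena for second-order Sobolev spaces are considerably less demanding than their $L^p$ counterparts.
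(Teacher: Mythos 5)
Your proof is correct, and it takes a genuinely different and more direct route than the paper's. The paper's argument for Theorem~\ref{th_CZ_dist} goes through the same conformal machinery as Theorem~\ref{th_sob_dist}: it sets $\tilde g = H^{2\eta} g$, verifies via property (iii) of Theorem~\ref{HCOSubQuadr_coro} that $\mathrm{Ric}_{\tilde g}$ is bounded, invokes the known CZ(2) inequality of G\"uneysu--Pigola on $(M,\tilde g)$, and then painstakingly transforms $\widetilde{\mathrm{Hess}}$, $\tilde\Delta$ and $d\mathrm{vol}_{\tilde g}$ back to $g$-quantities with a cascade of auxiliary estimates. Your approach bypasses all of that: the integrated Bochner identity $\int_M|\mathrm{Hess}\,\varphi|^2 = \int_M(\Delta\varphi)^2 - \int_M\mathrm{Ric}(\nabla\varphi,\nabla\varphi)$ reduces the problem to controlling $\int_M H^{2\eta}|\nabla\varphi|^2$, which your weighted integration by parts plus two Young inequalities handles cleanly (the absorption step works because $|\nabla H|\leq C$ and $H\geq 1$ force $H^{2\eta-2}\leq H^{4\eta}$, as you note). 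Each step is correct.

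Your observation that property (iii) of $H$ plays no role is worth emphasizing, because it exposes that the hypotheses of Theorem~\ref{th_CZ_dist} are not sharp for this conclusion: your argument uses only a distance-like $H\geq 1$ with bounded gradient (which exists on \emph{any} complete manifold by Greene--Wu \cite{GW}, no curvature assumption needed) together with the Ricci lower bound $\mathrm{Ric}_g\gtrsim -r^{2\eta}$. Neither the upper bound on Ricci nor the injectivity radius decay under (a), nor the sectional bound under (b), are actually invoked — these enter the paper's proof only because the conformal route needs $\mathrm{Hess}\,H$ controlled. What the paper's approach buys in exchange is a unified framework with the Sobolev inequalities of Section~\ref{sec_sob}, and a structure more likely to extend to $p\neq 2$ (the Bochner route is intrinsically $L^2$, as the paper itself remarks when leaving CZ($p$), $p\neq 2$, open). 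So your proof is not only correct but proves a strictly stronger statement; the trade-off is that it does not generalize as naturally.
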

An extensive study of $L^{p}$-Calder\'on-Zygmund inequalities (in short CZ(p)) and their interplay with the geometry was initiated in \cite{GuneysuPigola}, and we refer to that paper for an introduction to this topic. Even though CZ(2) is known to hold globally, without even require geodesic completeness,  under a global lower bound on the Ricci curvature, it is in general false without this assumption; see  \cite[Theorem B]{GuneysuPigola}. On the other hand, for $p\neq2$,  $p\in(1,\infty)$, CZ(p) holds if $(M, g)$ has bounded Ricci curvature and a positive injectivity radius; see \cite[Theorem C]{GuneysuPigola}. The proof of this result seems to really depend on a harmonic radius bound, and hence on the bound on the injectivity radius. 
Since in our setting $(M,\tilde{g})$  has bounded Ricci curvature and volume non-collapsing but, as far as we know, its injectivity radius is not necessarily bounded from below on the whole of M, it remains an open question if a CZ(p) with weight similar to \eqref{CZ-state} holds in our assumption when $p\neq 2$, .
\medskip

The organization of this paper is as follows. In Section \ref{SeqCutOffs} we recap some known results about the existence of special sequences of cut-off functions and see how these can be used to obtain density results for second order Sobolev spaces. In Section \ref{HarmCoordResc} we see explicitly how, on a suitable ball centered at each point of a manifold with a non-constant Ricci curvature bound and suitably decaying injectivity radii, we can control in harmonic coordinates the metric. In Section \ref{ContrExhaustFct} we construct good distance-like exhaustion function in this generality, first dealing with sub-quadratic Ricci curvature growth and suitably decaying injectivity radii and then with the situation in which we have sub-quadratic sectional curvature growth (and no assumptions on the injectivity radii). Starting from these exhaustion functions, in Section \ref{HessCO} we construct the cut-off functions needed for the proof of our first density result (Theorem \ref{th_main1}). In Section \ref{p2} we focus on the case $p=2$ and give the proof of Theorem \ref{Dens-p=2}. In this case we are assuming only a quadratic negative lower bound on the Ricci curvature. Making use of the weak Laplacian cut-off functions constructed in \cite{BianchiSetti}, under these assumptions, we are able to prove the density result by applying the divergence theorem to a suitable compactly supported vector field together with the Bochner formula.  In Section \ref{sec_sob} we prove the general Theorem \ref{th_sob_dist} about disturbed Sobolev inequalities and, as a consequence of the proof of this latter, we deduce Theorem \ref{th_sob}. Finally, as suggested by one of the anonymous referees, in Section \ref{FurtherAppl} we discuss some further applications of Theorem \ref{HCOSubQuadr_coro} and of the proof of Theorem \ref{th_sob}. In a first part we deduce geometric conditions ensuring the validity of the full Omori-Yau maximum principle at infinity for the Hessian or the validity of martingale completeness. A final subsection is devoted to the proof of Theorem \ref{th_CZ_dist}

\section{Sequences of Cut-off functions and applications to density problems}\label{SeqCutOffs}

Sequences of Laplacian and Hessian cut-off functions where defined in \cite{Guneysu} and \cite{GuneysuPigola}. Here we will need to introduce also the slightly different notions of weak Laplacian and weak Hessian cut-off functions. Namely
\begin{definition}
A complete Riemannian manifold $(M, g)$ is said to admit a sequence $\left\{\chi_{n}\right\}\subset C_{c}^{\infty}(M)$ of Laplacian cut-off functions, if $\left\{\chi_{n}\right\}$ has the following properties:
\begin{itemize}
\item[(C1)] $0\leq\chi_{n}(x)\leq 1$ for all $n\in\mathbb{N}$, $x\in M$;
\item[(C2)] for all compact $K\subset M$, there is a $n_{0}(K)\in\mathbb{N}$ such that for all $n\geq n_{0}(K)$, one has $\left.\chi_{n}\right|_{K}=1$;
\item[(C3)] $\left\|\nabla\chi_{n}\right\|_{\infty}\to 0$ as $n\to\infty$;
\item[(C4)] $\left\|\Delta \chi_{n}\right\|_{\infty}\to0$ ad $n\to\infty$.
\end{itemize}
Furthermore, $(M, g)$ is said to admit a sequence $\left\{\chi_{n}\right\}\subset C_{c}^{\infty}(M)$ of weak Laplacian cut-off functions, if $\left\{\chi_{n}\right\}$ satisfies (C1), (C2), and there exist constants $A_{1}$, $A_{2}$ such that, for all $n\in\mathbb{N}$,
\begin{itemize}
\item[(C3')] $\left\|\nabla\chi_{n}\right\|_{\infty}\leq \frac{A_{1}}{n}$;
\item[(C4')] $\left\|\Delta\chi_{n}\right\|_{\infty}\leq A_{2}$.
\end{itemize}
\end{definition}

\begin{definition}
$(M,g)$ is said to admit a sequence $\left\{\chi_{n}\right\}\subset C_{c}^{\infty}(M)$ of Hessian cut-off functions, if $\left\{\chi_{n}\right\}$ satisfies (C1), (C2), (C3), and 
\begin{itemize}
\item[(C4'')] $\left\|\mathrm{Hess}(\chi_{n})\right\|_{\infty}\to 0$ as $n\to\infty$.
\end{itemize}
Furthermore, $(M, g)$ is said to admit a sequence $\left\{\chi_{n}\right\}\subset C_{c}^{\infty}(M)$ of weak Hessian cut-off functions, if $\left\{\chi_{n}\right\}$ satisfies (C1), (C2), and there exist constants $A_{1}$, $A_{2}$ such that, for all $n\in\mathbb{N}$,
\begin{itemize}
\item[(C3'')] $\left\|\nabla\chi_{n}\right\|_{\infty}\leq A_{1}$;
\item[(C4''')] $\left\|\mathrm{Hess}\chi_{n}\right\|_{\infty}\leq A_{2}$.
\end{itemize}
\end{definition}

The following proposition (partially taken from \cite{Guneysu-book}) should give the state of the art on the existence of such sequences of cut-off functions: point (i) follows by \cite{SY} (see also \cite{Guneysu} for an alternative proof in the case $C=0$), point (ii) was proved in \cite{BianchiSetti}; point (iii) is a consequence of \cite[Lemma 5.3]{CheegerGromov}, point (iv) was proven in \cite[Corollary 5.1]{RimoldiVeronelli} sharpening a construction given in \cite{Tam}; (v) is a consequence of \cite[Theorem 1.3]{Huang}.
\begin{proposition}\label{Cutoffs}
Let $(M^m,g)$ be a complete Riemannian manifold.
\begin{itemize}
\item[(i)] If $(M,g)$ has $\mathrm{Ric}_{g}\geq -C$ for some constant $C\geq 0$, then $M$ admits a sequence of Laplacian cut-off functions.
\item[(ii)] More generally, fix a reference point $o$ in $(M,g)$, and denote by $r(x)\doteq\mathrm{dist}_{g}(x,o)$. If $$\mathrm{Ric}_{g}\geq-(m-1)C^{2}(1+r^2)^{\eta},$$ 
with $\eta\in\left[-1,1\right]$ then, for every $R\geq 1$ when $\eta\in\left(-1,1\right]$ and for every $R>0$ when $\eta=-1$, and for every $\gamma$ bigger than some constant $\Gamma(\eta, C,m)$ depending only on $\eta$, $C$ and $m$, there exists a sequence $\left\{\phi_{R}\right\}\subset C_{c}^{\infty}(M)$ of cut-off functions such that
\begin{enumerate}
\item $\phi_{R}\equiv1$ on $B_{R}(o)$;
\item $\mathrm{supp}(\phi_{R})\subset B_{\gamma R}(o)$;
\item $|\nabla \phi_{R}|\leq \frac{C_{1}}{R}$;
\item $|\Delta\phi_{R}|\leq \frac{C_{2}}{R^{1-\eta}}$.
\end{enumerate}
In particular, for $\eta\in\left[-1,1\right)$ this is a sequence of Laplacian cut-off functions. 
\item[(iii)] If $\left\|\mathrm{Riem}_{g}\right\|_{\infty}< \infty$ then there exists a sequence of Hessian cut-off functions.
\item[(iv)] If $\left\|\mathrm{Ric}_{g}\right\|_{\infty}<\infty$ and $\mathrm{inj}(M)>0$ then there exists a sequence of Hessian cut-off functions 
\item[(v)] There exist $\varepsilon(m)$ and $\Lambda^{\prime}(m)$ such that if $-\Lambda^{\prime}\leq \mathrm{Ric}_{g}\leq \Lambda$ and $\mathrm{vol}_{g}(B_{1}(x))\geq (1-\varepsilon)\omega_{m}$, for all $x\in M$ and for some $\Lambda\geq 0$, then there exists a sequence of Hessian cut-off functions.
\end{itemize}
\end{proposition}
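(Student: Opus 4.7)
The statement collects five independent results. Since each point comes with its own citation, the natural plan is to describe the key mechanism behind each, keeping in mind that the common theme is to first build a smooth distance-like function with controlled first and second order derivatives, and then compose with a standard scalar cutoff $\eta\in C_c^{\infty}([0,2))$ with $\eta\equiv 1$ on $[0,1]$, setting $\chi_n(x)\doteq \eta(f(x)/n)$. The chain rule gives $|\nabla \chi_n|\le \|\eta'\|_\infty |\nabla f|/n$, $|\Delta \chi_n|\le \|\eta'\|_\infty |\Delta f|/n + \|\eta''\|_\infty |\nabla f|^2/n^2$, and analogously for the Hessian; properties (C1)--(C2) being immediate, the whole issue is to produce the right $f$.

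For (i), under $\mathrm{Ric}_g\geq -C$ the Laplacian comparison theorem bounds $\Delta r\leq (m-1)\sqrt{C}\coth(\sqrt{C}r)$ in the sense of barriers. Following Schoen--Yau, I would smooth $r$ via a partition of unity adapted to a locally finite cover and obtain a smooth $f$ with $|\nabla f|$ and $|\Delta f|$ uniformly bounded; the composition above then satisfies (C3)--(C4). Point (ii) follows the same scheme with the sharper Bianchi--Setti exhaustion: the comparison model for $\mathrm{Ric}_g\ge -(m-1)C^2(1+r^2)^\eta$ produces, after smoothing, a function whose gradient is bounded and whose Laplacian grows at most as $r^\eta$; localizing to the annulus $\{R\le f\le \gamma R\}$ and rescaling the cutoff variable by $R$ converts this into the claimed scaling $|\nabla\phi_R|\lesssim R^{-1}$, $|\Delta\phi_R|\lesssim R^{\eta-1}$.

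For (iii), (iv) and (v) the obstacle shifts from the Laplacian to the full Hessian, which requires good local coordinates with quantitative metric control rather than only the comparison geometry of $r$. Under $|\mathrm{Riem}_g|\le K$ one has, by Cheeger--Gromov Lemma 5.3, a Whitney-type smoothing of $r$ in normal charts of uniform radius producing $f$ with $|\nabla f|$ and $|\mathrm{Hess}\,f|$ bounded, which proves (iii). For (iv) I would follow Tam: start from a Greene--Wu distance-like $\rho$ with bounded gradient and evolve it by the heat flow to time $t=1$; bounded Ricci and a positive injectivity radius give harmonic charts of uniform size with uniform $C^{1,\alpha}$ control on $g$, so parabolic Schauder estimates in each chart upgrade the trivial bound on $\Delta e^{t\Delta}\rho$ to a uniform $L^\infty$ bound on $\mathrm{Hess}\,e^{\Delta}\rho$. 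For (v) one appeals instead to Huang's Reifenberg-type coordinates: two-sided Ricci and almost-Euclidean unit balls give $C^\alpha$-close-to-Euclidean charts in which a local Calder\'on--Zygmund inequality converts the Laplacian cut-offs of (i) into Hessian cut-offs.

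The main technical obstacle, in each of (iii)--(v), is precisely the passage from Laplacian control to Hessian control: it forces one to exit the purely comparison-geometric framework and to work in concrete coordinate charts with explicit metric regularity, which is exactly the difficulty the authors will face in a more quantitative form in the later sections of the paper.
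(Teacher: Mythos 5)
The paper does not actually prove this proposition: it is a survey of five known results, each with its own citation, so there is no in-text argument to compare against. Your general scheme --- produce a distance-like function $f$ with controlled $\nabla f$, $\Delta f$ or $\mathrm{Hess}\,f$, then compose with a scalar cutoff $\psi(f/n)$ --- is the right one, and your summaries of (iii), (iv) and (v) agree with what Cheeger--Gromov, Tam/Rimoldi--Veronelli (precisely the heat-flow-plus-parabolic-Schauder-in-harmonic-charts mechanism the authors recall in their introduction), and Huang respectively do.

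Where you have a genuine gap is in (i)--(ii). You obtain $f$ by ``smoothing $r$ via a partition of unity adapted to a locally finite cover'' and assert that $|\Delta f|$ is uniformly bounded (respectively $\lesssim r^\eta$). But under a Ricci lower bound the Laplacian comparison theorem yields only a one-sided barrier inequality $\Delta r \leq (m-1)\sqrt{C}\coth(\sqrt{C}r)$; there is no lower bound on $\Delta r$, and no smoothing of $r$ can manufacture one. Since
\[
\Delta\chi_n = \frac{\psi'(f/n)}{n}\,\Delta f + \frac{\psi''(f/n)}{n^2}\,|\nabla f|^2
\]
and $\psi''$ has no fixed sign, the requirement $\|\Delta\chi_n\|_\infty\to 0$ in property (C4) is a genuine two-sided bound and cannot be obtained from one-sided control of $\Delta f$. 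The constructions the proposition actually rests on do not smooth $r$: they solve a PDE. As the paper itself records in Section~4, Step~0, the Bianchi--Setti exhaustion $h$ underlying point (ii) (and point (i) as the case $\eta=0$) satisfies $\Delta h = |\nabla h|^2 - \theta\,\tilde r^{2\eta}$, so the two-sided estimate on $\Delta h$ follows from the gradient bound and the control of $\tilde r$, not from a comparison inequality alone. Your subsequent rescaling on the annulus $\{R\le f\le\gamma R\}$, giving $|\nabla\phi_R|\lesssim R^{-1}$ and $|\Delta\phi_R|\lesssim R^{\eta-1}$, is correct once this input is supplied.
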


Following the terminology introduced in \cite{GuneysuPigola}, we recall that a $L^{p}$-Calder\'on-Zygmund inequality (CZ(p)) is said to hold on $(M,g)$ for some $1<p<\infty$ if there are constants $C_{1}\geq0$ and $C_{2}>0$, such that for all $u\in C_{c}^{\infty}(M)$ one has
\begin{equation}\label{CZp}\tag{CZ(p)}
\left\|\mathrm{Hess}(u)\right\|_{L^{p}}\leq C_{1}\left\|u\right\|_{L^{p}}+C_{2}\left\|\Delta u\right\|_{L^{p}}.
\end{equation}
Note that, as in \cite{GuneysuPigola}, here we have left out the case $p=1$, since such an inequality indeed fails for the Euclidean Laplace operator in $\mathbb{R}^{m}$.

The following result was proven in \cite{Guneysu}; see also Proposition 3.6 in \cite{GuneysuPigola}.
\begin{proposition}[Theorem 2.6 in \cite{Guneysu} and Proposition 3.6 in \cite{GuneysuPigola}]\label{PropGP3.6}(a) Assume that \eqref{CZp} holds for some $1<p<\infty$ and that $M$ admits a sequence of Laplacian cut-off functions. Then one has that $W_{0}^{2,p}(M)=W^{2,p}(M)$.\\
(b) If $M$ admits a sequence of weak Hessian cut-off functions, then one has $W_{0}^{2,p}(M)=W^{2,p}(M)$ for all $1<p<\infty$.
\end{proposition}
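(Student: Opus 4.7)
The plan for both parts is based on the same standard scheme: given $u\in W^{2,p}(M)$, first reduce to the case $u\in C^\infty(M)\cap W^{2,p}(M)$ by a local mollification argument on a locally finite atlas (a reduction independent of global geometry). With $u$ smooth, I would then approximate it by the compactly supported smooth sequence $u_n:=\chi_n u\in C_c^\infty(M)$ and prove $u_n\to u$ in $W^{2,p}(M)$. The role of the cut-off properties is different in the two settings.

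\textbf{Part (a).} The elementary identities
\[
\nabla(\chi_n u)=\chi_n\nabla u+u\nabla\chi_n,\qquad \Delta(\chi_n u)=\chi_n\Delta u+2\langle\nabla\chi_n,\nabla u\rangle+u\Delta\chi_n,
\]
combined with $\|\nabla\chi_n\|_\infty\to 0$, $\|\Delta\chi_n\|_\infty\to 0$, $|\chi_n|\leq 1$, and dominated convergence applied to the terms $(\chi_n-1)\nabla u$ and $(\chi_n-1)\Delta u$, give $u_n\to u$ in $W^{1,p}(M)$ and $\Delta u_n\to\Delta u$ in $L^p(M)$. Since each $u_n-u_m$ lies in $C_c^\infty(M)$, the hypothesized $L^p$-Calder\'on–Zygmund inequality applies to yield
\[
\|\mathrm{Hess}(u_n-u_m)\|_{L^p}\leq C_1\|u_n-u_m\|_{L^p}+C_2\|\Delta(u_n-u_m)\|_{L^p}\longrightarrow 0,
\]
so $\{\mathrm{Hess}\,u_n\}$ is Cauchy in $L^p$. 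Testing the limit against smooth compactly supported test forms and using the already proved $W^{1,p}$-convergence $u_n\to u$, one identifies the Cauchy limit with $\mathrm{Hess}\,u$, concluding that $u_n\to u$ in $W^{2,p}(M)$.

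\textbf{Part (b).} Here I would not rely on a CZ-type inequality but instead estimate the Hessian of $u_n$ directly via the product formula
\[
\mathrm{Hess}(\chi_n u)=\chi_n\,\mathrm{Hess}\,u+\nabla\chi_n\otimes\nabla u+\nabla u\otimes\nabla\chi_n+u\,\mathrm{Hess}\,\chi_n,
\]
together with the analogous formula for $\nabla u_n$. The bounds $\|\nabla\chi_n\|_\infty\leq A_1$ and $\|\mathrm{Hess}\,\chi_n\|_\infty\leq A_2$ no longer tend to zero, but property (C2) guarantees that for every compact $K\subset M$ we have $\chi_n\equiv 1$ on $K$ for all $n$ sufficiently large, so $\nabla\chi_n$ and $\mathrm{Hess}\,\chi_n$ vanish on $K$ eventually. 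Thus the error terms such as $u\,\nabla\chi_n$, $\nabla u\,\nabla\chi_n$ and $u\,\mathrm{Hess}\,\chi_n$ are pointwise dominated by the fixed $L^p$ functions $A_1|u|$, $A_1|\nabla u|$ and $A_2|u|$ (since $u\in W^{2,p}$) and converge to $0$ pointwise. Dominated convergence then forces convergence to $0$ in $L^p$ of each such term, giving $u_n\to u$ in $W^{2,p}(M)$.

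\textbf{Expected main obstacle.} The delicate point in (a) is conceptual: the CZ(p) assumption only controls Hessians of compactly supported test functions, so one cannot directly compare $\mathrm{Hess}\,u$ with $\mathrm{Hess}\,u_n$. The Cauchy argument on differences, together with the identification of the distributional limit via the separate $W^{1,p}$-convergence, is the essential way to circumvent this. In (b) the obstacle is instead that the cut-off derivatives do not vanish uniformly; the proof must genuinely exploit the fact that their supports exhaust $M$ from the inside, turning a uniform-in-$n$ sup bound into $L^p$-smallness through an absolute integrability envelope provided by $u\in W^{2,p}$.
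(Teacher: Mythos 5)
Your argument is correct and coincides with the approach the paper takes (mostly by citation, with the key steps recorded in Remark~\ref{rmk_dens}): part~(a) is the standard proof from the cited reference, where CZ(p) applied to the compactly supported differences $u_n-u_m$ yields Cauchyness of $\mathrm{Hess}\,u_n$ in $L^p$ and the limit is identified via the $W^{1,p}$-convergence; part~(b) matches Remark~\ref{rmk_dens} exactly, using the uniform bounds on $\nabla\chi_n$ and $\mathrm{Hess}\,\chi_n$ together with the fact that $\mathrm{supp}(1-\chi_n)$ eventually avoids any compact set, so that absolute integrability of $u\in W^{2,p}$ gives $L^p$-smallness.
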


\begin{remark}\label{rmk_dens}
\rm{Actually, what is asked in point (b) of Proposition 3.6 in \cite{GuneysuPigola} is the existence of a sequence of genuine Hessian cut-off functions. Here we observe that what is really needed for the density result is that the gradient and the Hessian of the cut-offs are uniformly bounded. Indeed, first note that $C^{\infty}(M)\cap W^{2,p}(M)$ is dense in $W^{2,p}(M)$ (see for instance \cite[Theorem 2]{GuidettiGueneysuPallara}). Then, given a smooth $f\in W^{2,p}(M)$,  pick a sequence $\left\{\chi_{n}\right\}$ of weak Hessian cut-off functions and define $f_{n}\doteq\chi_{n}f$. Proceeding as in \cite{GuneysuPigola}, we get that 
\begin{align}
\|(f_n-f)\|_{L^p}&=\|((1-\chi_n)f)\|_{L^p}\label{conv1}\\
\|\nabla (f_n-f)\|_{L^p}&\leq \|f\nabla\chi_n\|_{L^p}+\|(1-\chi_n)\nabla f\|_{L^p}\label{conv2}\\
\|\mathrm{Hess}(f_n-f)\|_{L^p}&\leq \|f\mathrm{Hess}(\chi_n)\|_{L^p}+\||\nabla \chi_n| |\nabla f|\|_{L^p}+\|(1-\chi_n)\mathrm{Hess}(f)\|_{L^p}\label{conv3}
\end{align}
Each of $(1-\chi_n)$, $\nabla\chi_n$ and $\mathrm{Hess}(\chi_n)$ is uniformly bounded and supported in $\mathrm{supp}(1-\chi_n)$. Moreover by property (C2), given any compact set $K\subset M$, we have that $\mathrm{supp}(1-\chi_n)\subset M\setminus K$ for $n$ large enough. Since $f\in W^{2,p}(M)$ this permits to conclude that all the terms at the RHS of \eqref{conv1}, \eqref{conv2} and \eqref{conv3} tend to $0$ as $n\to\infty$.
%
}
\end{remark}

\section{Harmonic coordinates and rescalings}\label{HarmCoordResc}

Recall that a local coordinate system $\left\{x^{i}\right\}$ is said to be harmonic if for any $i$, $\Delta_{g}x^{i}=0$. The harmonic radius is then defined as follows.
\begin{definition}
Let $(M^m, g)$ be a smooth Riemannian manifold and let $x\in M$. Given $Q>1$, $k\in\mathbb{N}$, and $\alpha\in\left(0,1\right)$, we define the $C^{k,\alpha}$ harmonic radius at $x$ as the largest number $r_{H}=r_{H}(Q,k,\alpha)(x)$ such that on the geodesic ball $B_{r_H}(x)$ of center $x$ and radius $r_{H}$, there is a harmonic coordinate chart such that the metric tensor is $C^{k,\alpha}$ controlled in these coordinates. Namely, if $g_{ij}$, $i,j=1,\ldots,m$, are the components of $g$ in these coordinates, then
\begin{enumerate}
\item $Q^{-1}\delta_{ij}\leq g_{ij}\leq Q\delta_{ij}$ as bilinear forms;
\item $\sum_{1\leq|\beta|\leq k}r_{H}^{|\beta|}\sup\left|\partial_{\beta}g_{ij}(y)\right|+\sum_{|\beta|=k}r_{H}^{k+\alpha}\sup_{y\neq z}\frac{\left|\partial_{\beta}g_{ij}(z)-\partial_{\beta}g_{ij}(y)\right|}{d_{g}(y,z)^{\alpha}}\leq Q-1$.
\end{enumerate}
We then define the (global) harmonic radius $r_{H}(Q,k,\alpha)(M)$ of $(M,g)$ by
\[
\ r_{H}(Q,k,\alpha)(M)=\inf_{x\in M}r_{H}(Q,k,\alpha)(x)
\]
where $r_{H}(Q,k,\alpha)(x)$ is as above.
\end{definition}
As a consequence of \cite[Lemma 2.2]{Anderson} we have the validity of the following 
\begin{proposition}\label{HarmRadEst}
Let $\alpha\in(0,1)$, $Q>1$, $\delta>0$. Let $(M^m,g)$ be a smooth Riemannian manifold, and $\Omega$ an open subset of $M$. Set
\[
\ \Omega(\delta)=\left\{x\in M\quad\mathrm{s.t.}\quad d_{g}(x,\Omega)<\delta\right\}.
\]
Suppose that
\[
\ |\mathrm{Ric}_{g}(x)|\leq 1\quad\mathrm{and}\quad\mathrm{inj}_{g}(x)\geq i\quad\mathrm{for\,\, all}\quad x\in\Omega(\delta),
\]
then, there exists a positive constant $C_{HR}=C_{HR}(m,Q,k,\alpha,\delta, i)$, such that for any $x\in\Omega$
$$r_{H}(Q, 1,\alpha)(x)\geq C_{HR}.$$
\end{proposition}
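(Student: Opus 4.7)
The plan is to derive the proposition from Anderson's Lemma 2.2 in \cite{Anderson} essentially off the shelf, using the $\delta$-enlargement $\Omega(\delta)$ to supply precisely the buffer room needed to meet its hypotheses.

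First I would recall Anderson's local harmonic radius estimate in the following quantitative form: given $m$, $Q>1$, $\alpha\in(0,1)$, and positive constants $\Lambda$, $r_{0}$, $i_{0}$, there exists $c_{0} = c_{0}(m, Q, \alpha, \Lambda, r_{0}, i_{0})>0$ such that whenever a point $x\in M$ satisfies $|\mathrm{Ric}_{g}(y)| \leq \Lambda$ for all $y\in B_{r_{0}}(x)$ and $\mathrm{inj}_{g}(x) \geq i_{0}$, then $r_{H}(Q, 1, \alpha)(x) \geq c_{0}$. Anderson's original result is phrased as a non-collapse / compactness statement, but it is routinely repackaged in this scale-invariant quantitative form.

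Next, for any fixed $x\in\Omega$, the very definition of $\Omega(\delta)$ gives $B_{\delta}(x) \subset \Omega(\delta)$, so the hypotheses of the proposition imply $|\mathrm{Ric}_{g}(y)| \leq 1$ for every $y\in B_{\delta}(x)$ and $\mathrm{inj}_{g}(x) \geq i$. Applying the local estimate with $\Lambda = 1$, $r_{0} = \delta$, $i_{0} = i$ produces
\[
r_{H}(Q, 1, \alpha)(x) \geq c_{0}(m, Q, \alpha, 1, \delta, i)
\]
uniformly in $x\in\Omega$. Setting $C_{HR} := c_{0}(m, Q, \alpha, 1, \delta, i)$, with a harmless extra dependence on the parameter $k$ as in the stated formulation, finishes the proof.

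The only mild subtlety I would anticipate is matching Anderson's lemma---typically proved by a blow-up/contradiction argument yielding a constant of somewhat implicit dependence---with the per-point quantitative estimate needed here. Once the lemma is rephrased in the scale-invariant form above, however, no point of $\Omega$ requires separate treatment: the uniform $\delta$-buffer is precisely what allows the curvature and injectivity hypotheses to be verified on a full ball around each base point rather than only at the point itself.
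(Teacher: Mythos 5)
Your argument is correct and coincides with the paper's: the paper itself offers no proof of Proposition~\ref{HarmRadEst} beyond the single sentence ``As a consequence of [Lemma 2.2]\cite{Anderson}\dots'', and you have simply spelled out that deduction --- restate Anderson's local harmonic radius estimate in its per-point quantitative form, observe that $B_{\delta}(x)\subset\Omega(\delta)$ for every $x\in\Omega$ so the Ricci and injectivity hypotheses are available on a ball of uniform size $\delta$ about each base point, and apply the estimate with $\Lambda=1$, $r_{0}=\delta$, $i_{0}=i$. One small bibliographic caveat worth being aware of: Anderson's Lemma 2.2 is literally stated with a local lower \emph{volume} bound $\mathrm{vol}(B_{r}(x))\geq \mu r^{m}$ rather than an injectivity radius bound; passing from $\mathrm{inj}_{g}\geq i$ to such a volume bound is standard (e.g.\ via Croke's inequality, Proposition 14 in \cite{croke}, which the paper itself invokes in Section~\ref{sec_sob}), so this does not affect the correctness of your argument, but if you were writing this out in full you would want to record that intermediate step explicitly.
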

Since 
\[
\ \partial_{s}g^{ij}=-\partial_{s}g_{lk}g^{il}g^{kj},
\]
note that under the assumptions of Proposition \ref{HarmRadEst}, for every $x\in \Omega$, on $B_{r_{H}}(x)$ we have also that:
\begin{itemize}
\item[(1')] $Q^{-1}\delta^{ij}\leq g^{ij}\leq Q\delta^{ij}$;
\item[(2')] $\sum_{s=1}^mr_{H}\sup\left|\partial_{s}g^{ij}(y)\right|+\sum_{s=1}^m r_{H}^{1+\alpha}\sup_{y\neq z}\frac{\left|\partial_{s}g^{ij}(z)-\partial_{s}g^{ij}(y)\right|}{d_{g}(y,z)^{\alpha}}\leq C(Q)$,
\end{itemize}
for some constant $C(Q)>0$, depending only on $Q$.
\bigskip

Fix now $o\in M$, denote by $r(x)\doteq d_{g}(x, o)$ and assume that,  for some non-decreasing $\lambda:\mathbb{R}\to\mathbb{R}^{+}$ and some uniform constant $i_{0}>0$,
\[
\ |\mathrm{Ric}_{g}(x)|\leq \lambda^{2}(r(x))\quad\mathrm{and}\quad\mathrm{inj}_{g}(x)\geq \frac{i_{0}}{\lambda(r(x))}\quad\mathrm{on}\quad M.
 \]
 We are going to suitably rescale the metric $g$ in order to be able to apply Proposition \ref{HarmRadEst}.
 
Given $x\in M\setminus \bar{B}^{g}_{2}(o)$, for any $y\in B^{g}_{1}(x)$ we have that 
 \[
 \ |\mathrm{Ric}_{g}(y)|\leq \lambda^2(r(x)+1)\quad\mathrm{and}\quad\mathrm{inj}_{g}(y)\geq \frac{i_{0}}{\lambda(r(x)+1)}.
 \]
  Denoting by 
 \[
 \ \lambda_1\doteq\lambda(r(x)+1), 
 \]
 we introduce the rescaled metric
\[
\ g_{\lambda}(y)=\lambda_{1}^{2}g(y).
\]
Then, for any $y\in B_{\lambda_{1}}^{g_{\lambda}}(x)$, 
\[
|\mathrm{Ric}_{g_{\lambda}}(y)|\leq 1\quad\mathrm{and}\quad\mathrm{inj}_{g_{\lambda}}(y)\geq \lambda_{1}\mathrm{inj}_{g}(y)\geq\lambda_{1}\frac{i_{0}}{\lambda_{1}}= i_{0}.
\]
By Proposition \ref{HarmRadEst} we have that there exists a constant $C_{HR}(m,Q,\alpha,\delta, i_0)>0$ such that $\forall\, y\in B^{g_{\lambda}}_{\lambda_{1}-\delta}(x)$, there exist harmonic coordinates  on $B^{g_{\lambda}}_{C_{HR}}(y)$ for which the metric $g_{\lambda}$ satisfies the analogous relations to  (1), (1'), (2) and (2') with $r_{H}=C_{HR}$ (and $k=1$). 
\medskip

Hence, coming back to $g$, for every $y\in B^{g}_{1-\frac{\delta}{\lambda_{1}}}(x)$ we can  find on $B^{g}_{C_{HR}/\lambda_{1}}(y)$ harmonic coordinates with respect to which
\begin{itemize}
\item[(i)] $Q^{-1}\lambda_{1}^{-2}\delta_{ij}\leq g_{ij}\leq Q\lambda_{1}^{-2}\delta_{ij}$;
\item[(ii)] $\sum_{s=1}^m\lambda_{1}^{2}C_{HR}\sup\left|\partial_{s}g_{ij}(y)\right|+\sum_{s=1}^mC_{HR}^{1+\alpha}\lambda_{1}^{2-\alpha}\sup_{y\neq z}\frac{\left|\partial_{s}g_{ij}(z)-\partial_{s}g_{ij}(y)\right|}{d_{g}(y,z)^{\alpha}}\leq Q-1$;
\end{itemize}
and thus also
\begin{itemize}
\item[(i')] $Q^{-1}\lambda_{1}^{2}\delta^{ij}\leq g^{ij}\leq Q\lambda_{1}^2\delta^{ij}$;
\item[(ii')] $\sum_{s=1}^m\lambda_{1}^{-2}C_{HR}\sup\left|\partial_{s}g^{ij}(y)\right|+\sum_{s=1}^mC_{HR}^{1+\alpha}\sup_{y\neq z}\lambda_{1}^{-2-\alpha}\frac{\left|\partial_{s}g^{ij}(z)-\partial_{s}g^{ij}(y)\right|}{d_{g}(y,z)^{\alpha}}\leq C(Q)$,
\end{itemize}
for some constant C(Q).

\section{Construction of controlled exhaustion functions}\label{ContrExhaustFct}
The key step in our construction of sequences of (weak) Hessian cut-off functions is to exhibit suitable smooth exhaustion functions with a good explicit control on the gradient and the Hessian in terms of the distance function $r$ to a fixed reference point. It is already known that this is possible when $\left\|\mathrm{Riem}_{g}\right\|_{\infty}<\infty$, \cite{CheegerGromov}, or when $\left\|\mathrm{Ric}_{g}\right\|_{\infty}<\infty$ and $\mathrm{inj}_{g}(M)>0$, \cite{RimoldiVeronelli}. For a further recent result see also \cite{Huang}. Here, we will deal with the situation in which the curvature is controlled by a sub-quadratic function of the distance from a fixed reference point.

\subsection{Sub-quadratic Ricci growth}\label{SubQuadrRic}
Let $o\in M$, $r(x)\doteq \mathrm{dist}_{g}(x,o)$ and suppose that we are in the assumption (a) of Theorem \ref{HCOSubQuadr_coro}. Up to change the values of the constants $D$ and $i_0$, this is  equivalent to assume that for some $0<\eta\leq1$, $D>0$ and some $i_{0}>0$,
\begin{equation}\label{Hp}
\ |\mathrm{Ric}_{g}|(x)\leq D^2(1+r(x)^2)^{\eta}\doteq\lambda^{2}(r(x)),\quad\mathrm{inj}_{g}(x)\geq\frac{i_{0}}{\lambda(r(x))},\quad\mathrm{on}\, \,M.
\end{equation}

\textit{All over this section, $C$ will denote real constants greater than $1$, all independent of $x\in M$, whose explicit value can possibly change from line to line. Moroever, for any $\beta>0$,  the Euclidean ball of radius $\beta$ centered at the origin will be denoted by $\mathbb{B}_{\beta}$ .}
\medskip

\noindent\textsc{\underline{Step 0}:} \textit{Exhaustion functions with controlled Laplacian.} 
\medskip

Let $h\in C^{\infty}(M)$ be the exhaustion function given in \cite[Theorem 2.1]{BianchiSetti}. Then 
\begin{itemize}
\item[(i)] $C^{-1}r(x)^{1+\eta}\leq h(x)\leq C\max\left\{r(x)^{1+\eta}, 1\right\}$ on $M$;
\item[(ii)] $|\nabla h|\leq Cr^{\eta}$ on $M\setminus\bar{B}_{1}(o)$;
\item[(iii)] $|\Delta h|\leq Cr^{2\eta}$ on $M\setminus\bar{B}_{1}(o)$.
\end{itemize}
Moreover, by the construction in the proof of \cite[Theorem 2.1]{BianchiSetti}, $h$ is a solution of 
\begin{equation}\label{Poisson}
\Delta h=|\nabla h|^2-\theta \tilde r^{2\eta}\doteq f
\end{equation}
on $M\setminus\bar{B}_{1}(o)$, where $\theta$ is a positive fixed constant and $\tilde r\in C^\infty(M)$ is a smooth 1st order approximation of the distance function, which satisfies in particular 
\begin{itemize}
\item $C^{-1}r(x)\leq \tilde r(x)\leq C\max\left\{r(x), 1\right\}$
\item $|\nabla \tilde r|\leq C$
\end{itemize}
on $M$.
\medskip


\noindent\textsc{\underline{Step 1}}: \textit{using harmonic coordinates.}
\medskip


Given $x\in M\setminus \bar{B}_{2}(o)$, we define $h_{x}:B_{\varepsilon}(x)\to\mathbb{R}$ by
\begin{equation*}
\ h_{x}(y)=h(y)-h(x).
\end{equation*}
Then $h_{x}(x)=0$, $h_{x}$ satisfies \eqref{Poisson}, and
\begin{itemize}
\item $|\nabla h_{x}|\leq Cr^{\eta}$ ;
\item $|\Delta h_{x}|\leq Cr^{2\eta}$;
\item  $\left|\mathrm{Hess}\,h_{x}\right|=\left|\mathrm{Hess}\,h\right|$ ,
\end{itemize}
Fix now $\alpha\in (0,1)$, an accuracy $Q>1$ and a sufficiently small $\delta>0$. By \eqref{Hp} and Section \ref{HarmCoordResc}, letting again
\[
\ \lambda_{1}\doteq\lambda(r(x)+1)=D((r(x)+1)^2+1)^{\eta/2},
\]
 we know that there exists a constant $C_{HR}(m,Q,\alpha,\delta, i_{0})$ such that we can find on $B_{C_{HR}/\lambda_{1}}(x)$ a harmonic chart
\[
\ \varphi_{H}=(y^{1},\ldots,y^{m}):B_{C_{HR}/\lambda_{1}}(x)\to U\subset\mathbb{R}^{m},
\]
such that $\varphi_{H}(x)=0$, and with respect to which
\begin{itemize}
\item[(i)] $Q^{-1}\lambda_{1}^{-2}\delta_{ij}\leq g_{ij}\leq Q\lambda_{1}^{-2}\delta_{ij}$;
\item[(ii)] $\sum_{s=1}^m\lambda_{1}^{2}C_{HR}\sup\left|\partial_{s}g_{ij}(y)\right|+\sum_{s=1}^m C_{HR}^{1+\alpha}\lambda_{1}^{2-\alpha}\sup_{y\neq z}\frac{\left|\partial_{s}g_{ij}(z)-\partial_{s}g_{ij}(y)\right|}{d_{g}(y,z)^{\alpha}}\leq Q-1$;

\item[(i')] $Q^{-1}\lambda_{1}^{2}\delta^{ij}\leq g^{ij}\leq Q\lambda_{1}^2\delta^{ij}$;
\item[(ii')] $\sum_{s=1}^m\lambda_{1}^{-2}C_{HR}\sup\left|\partial_{s}g^{ij}(y)\right|+\sum_{s=1}^mC_{HR}^{1+\alpha}\lambda_{1}^{-2-\alpha}\sup_{y\neq z}\frac{\left|\partial_{s}g^{ij}(z)-\partial_{s}g^{ij}(y)\right|}{d_{g}(y,z)^{\alpha}}\leq C(Q)$,
\end{itemize}
for some constant $C(Q)$.  

%
%
\medskip

\noindent\textsc{\underline{Step 2}}: \textit{pointwise Schauder estimate.} 
\medskip

Note that
\[
\ \mathbb{B}_{C_{HR}/\sqrt{Q}}\subset U=\varphi_{H}\left(B_{C_{HR}/\lambda_{1}}(x)\right).
\]
Define $\hat{h}_{x}\doteq h_{x}\circ\varphi_{H}^{-1}$, $\hat{f}\doteq f\circ\varphi_{H}^{-1}$, and $\hat{g}^{ij}\doteq g^{ij}\circ\varphi_{H}^{-1}$. Letting $\beta\doteq C_{HR}/(2\sqrt{Q})$, define $\hat{h}_{\beta,x}:\mathbb{B}_{2}\to\mathbb{R}$ by $\hat{h}_{\beta,x}(v)\doteq\hat{h}_{x}(\beta v)$. Then
\[
\ \partial^{2}_{ij} \hat{h}_{\beta, x}(v)=\beta^{2}\partial^{2}_{ij}\hat{h}_{x}(\beta v).
\]
By \eqref{Poisson}, we hence get that in these coordinates, on $\mathbb{B}_{2}$,
\begin{align*}
\hat{g}^{ij}(\beta v)\partial^{2}_{ij}\hat{h}_{\beta, x}(v)=&\beta^{2}\hat{g}^{ij}(\beta v)\partial^{2}_{ij} \hat{h}_{x}(\beta v)\\
=&\beta^2\hat{f}(\beta v).
\end{align*}
Hence
\begin{equation}\label{EqHathbeta}
\lambda_{1}^{-2}\hat{g}^{ij}(\beta v)\partial^{2}_{ij}\hat{h}_{\beta, x}(v)=\lambda_{1}^{-2}\beta^2\hat{f}(\beta v)\doteq\hat{f}_{\beta}(v).
\end{equation}
Note that
\begin{align*}
&\lambda_{1}^{-2}\hat{g}^{ij}(\beta\cdot)\geq \lambda_{1}^{-2}Q^{-1}\lambda_{1}^2\delta^{ij}=Q^{-1}\delta^{ij}\quad\mathrm{on\,\,}\mathbb{B}_{2},\\
&\left\|\lambda_{1}^{-2}\hat{g}^{ij}(\beta\cdot)\right\|_{L^{\infty}(\mathbb{B}_{2})}\leq Q\lambda_{1}^{-2}\lambda_{1}^2=Q,\\
&\left[\lambda_{1}^{-2}\hat{g}^{ij}(\beta \cdot)\right]_{C^{0,\alpha}_{L^{\infty}}}(0)\leq\frac{\lambda_{1}^{-2}\beta}{C_{HR}}C(Q)\lambda_{1}^2=C(Q)/2\sqrt{Q}.
\end{align*}


Applying classical pointwise Schauder estimates for second order elliptic operators (see in particular \cite[Theorem 1.1]{Han} or \cite[Theorem 5.20]{HanLin}) to equation \eqref{EqHathbeta} we hence get that
\begin{equation}\label{SchaudEstHathbeta}
\left[\hat{h}_{\beta,x}\right]_{C^{2,\alpha}_{L^{\infty}}}(0)\leq C \left\{\left\|\hat{h}_{\beta, x}\right\|_{L^{\infty}(\mathbb{B}_{1})}+\left\|\hat{f}_{\beta}\right\|_{L^{\infty}(\mathbb{B}_{1})}+\left[\hat{f}_{\beta}\right]_{C^{0,\alpha}_{L^{\infty}}}(0)\right\}.
\end{equation}
From this it follows that 
\begin{equation*}
\beta^2\left|\partial^2_{ij}\hat{h}_{x}(0)\right|\leq C\left\{\left\|\hat{h}_{ x}\right\|_{L^{\infty}(\mathbb{B}_{\beta})}+\frac{\beta^2}{\lambda_{1}^{2}}\left\|\hat{f}\right\|_{L^{\infty}(\mathbb{B}_{\beta})}+\frac{\beta^{2+\alpha}}{\lambda_{1}^{2}}\left[\hat{f}\right]_{C^{0,\alpha}_{L^{\infty}}(\B_\beta)}\right\},
\end{equation*}
i.e. 
\begin{equation}\label{Est1}
\left|\partial^2_{ij}\hat{h}_{x}(0)\right|\leq C\left\{\frac{1}{\beta^2}\left\|\hat{h}_{ x}\right\|_{L^{\infty}(\mathbb{B}_{\beta})}+\frac{1}{\lambda_{1}^{2}}\left\|\hat{f}\right\|_{L^{\infty}(\mathbb{B}_{\beta})}+\frac{\beta^\alpha}{\lambda_{1}^{2}}\left[\hat{f}\right]_{C^{0,\alpha}_{L^{\infty}}(\B_\beta)}\right\}.
\end{equation}
\medskip

\textit{From now on we will denote $\hat{h}_{x}$ simply by $\hat{h}$, being understood the fact that it depends on the point $x$ we have fixed on $M$.}
\medskip

About the first term on the RHS of \eqref{Est1}, we note that for any $v\in\mathbb{B}_{\beta}$, letting $y=\varphi_{H}^{-1}(v)$, we have that
\begin{align}\label{est_hath}
|\hat{h}(v)|=&|h_{x}(y)|=|h(y)-h(x)|\\
\leq&C d_{g}(x,y)\sup \{r^{\eta}(\zeta):\zeta\in B_{d_g(x,y)}(x)\}\nonumber\\
\leq&C\left(r(x)+\frac{C_{HR}}{\lambda_{1}}\right)^{\eta}\frac{C_{HR}}{\lambda_{1}}\nonumber\\
\leq& C\nonumber.
\end{align}
Here the last inequality comes from the definition of $\lambda_{1}$.

\noindent About the second term, note that 
\begin{align*}
|\hat{f}(v)|=&|f(y)|= \left||\nabla h_x|^2(y)-\theta \tilde r(y)^{2\eta}\right|\\
\leq & Cr(y)^{2\eta}\\
\leq&C\left(r(x)+\frac{C_{HR}}{\lambda_{1}}\right)^{2\eta}\\
\leq&  C\rho^{2\eta} r(x)^{2\eta},
\end{align*}
for some positive constant $\rho>1$.
In particular,
\begin{align}\label{9half}
\ \frac{1}{\lambda_{1}^{2}}\left\|\hat{f}\right\|_{L^{\infty}(\mathbb{B}_{\beta})}&\leq C\frac{r(x)^{2\eta}}{\lambda_{1}^{2}}\\&\leq C.\nonumber
\end{align}

It remains to estimate $[\hat{f}]_{C^{0,\alpha}_{L^{\infty}}(\mathbb{B}_{\beta})}$. Letting $v,w\in\mathbb{B}_{\beta}$, note that
\begin{align}
\frac{\left|\hat{f}(v)-\hat{f}(w)\right|}{|v-w|^{\alpha}}=&\frac{||\nabla h_x|^2(\varphi_{H}^{-1}(v))-|\nabla h_x|^2(\varphi_{H}^{-1}(w))-\theta(\tilde r^{2\eta}(\varphi_{H}^{-1}(v))-\tilde r^{2\eta}(\varphi_{H}^{-1}(w)))|}{|v-w|^{\alpha}}\label{EstHoldf}\\
\leq&[|\nabla h_x|^2\circ\varphi_{H}^{-1}]_{C^{0,\alpha}_{L^{\infty}}(\mathbb{B}_{\beta})}+C\frac{|\tilde r^{2\eta}(\varphi_{H}^{-1}(v))-\tilde r^{2\eta}(\varphi_{H}^{-1}(w))|}{|v-w|^{\alpha}}.\nonumber
\end{align}

The first term will be estimated in Step 3 and Step 4 below. About the second term, letting $y=\varphi_{H}^{-1}(v)$ and $z=\varphi_{H}^{-1}(w)$, we have that
\begin{align}
\frac{|\tilde r^{2\eta}(\varphi_{H}^{-1}(v))-\tilde r^{2\eta}(\varphi_{H}^{-1}(w))|}{|v-w|^{\alpha}}=&\frac{|\tilde r(y)^{2\eta}-\tilde r(z)^{2\eta}|}{|v-w|^{\alpha}}\label{SecondAddendHold}\\
\leq&\sup \{2\eta \tilde r(\zeta)^{2\eta-1}|\nabla \tilde r|(\zeta):\zeta\in B_{\max\left\{d_g(x,y),d_{g}(x,z)\right\}}(x)\}\frac{d_g(y,z)}{|v-w|^{\alpha}}\nonumber\\
\leq&2\eta C \sup\{ \tilde r(\zeta)^{2\eta-1}:\zeta\in B_{d_g(x,y)}(x)\}\frac{\sqrt{Q}}{\lambda_{1}}|v-w|^{1-\alpha}\nonumber\\
\leq&2\eta C
\frac{\sqrt{Q}}{\lambda_{1}}(2\beta)^{1-\alpha}\rho^{2\eta-1} r(x)^{2\eta-1},\nonumber
\end{align}
where $Q$ is the chosen accuracy of the harmonic coordinates.
\medskip

\noindent\textsc{\underline{Step 3}}: \textit{estimate of $[|\nabla h_x|^2\circ \varphi_{H}^{-1}]_{C^{0,\alpha}_{L^{\infty}}(\mathbb{B}_{\beta})}$ using Sobolev embeddings.}
\medskip

Letting $p>m$, by the Euclidean Sobolev embeddings (see e.g \cite[p. 109]{Adams}), we have 
\[
[|\nabla h_x|^2\circ\varphi_{H}^{-1}]_{C^{0,\alpha}_{L^\infty}(\B_\beta)}\leq K_3\left\{\||\nabla h_x|^2\circ\varphi_{H}^{-1}\|^p_{L^p(\B_\beta)}+\sum_j\|\partial_j \left( |\nabla h_x|^2\circ\varphi_{H}^{-1}\right)\|^p_{L^p(\B_\beta)}\right\}^{1/p},
\]
with $\alpha=1-\frac{m}{p}$ and $K_3$ a positive constant depending only on $p$, $m$ and $\alpha$.
Concerning the first term in the RHS, since $|\nabla h_x|\leq Cr^{\eta}$ we get 
\begin{align}\label{est_grad}
\||\nabla h_x|^2\circ\varphi_{H}^{-1}\|^p_{L^p(\B_\beta)}&\leq C\|r^{2\eta}\circ\varphi_{H}^{-1}\|^p_{L^p(\B_\beta)}\leq \omega_mC\beta^m\|r\|_{L^\infty(B_\frac{C_{HR}}{2\lambda_{1}}(x))}^{2\eta p}\\&\leq C\left(r(x)+\frac{C_{HR}}{2\lambda_{1}}\right)^{2\eta p},\nonumber
\end{align}
where $\omega_m$ is the volume of the $m$-dimensional unit sphere. Concerning the second term, let us compute
\begin{align*}
\partial_j(|\nabla h_x|^2\circ\varphi_{H}^{-1}) &= \partial_j (\partial_k  h_x\partial_i  h_x g^{ki})\\
&= 2 \partial_j\partial_k  h_x\partial_i  h_x g^{ki} + \partial_k  h_x\partial_i  h_x \partial_j g^{ki} ,
\end{align*} 
so that
\begin{align*}
\|\partial_j (|\nabla h_x|^2\circ\varphi_{H}^{-1})\|_{L^p(\B_\beta)} 
\leq& 2 \|\partial_j\partial_k  \hat{h}\partial_l  \hat{h} \hat{g}^{kl}\|_{L^p(\B_\beta)} + \|\partial_k  \hat{h}\partial_l  \hat{h} \partial_j \hat{g}^{kl}\|_{L^p(\B_\beta)}\\
\leq& 2\sum_{k,l} \|\partial_j\partial_k  \hat{h}\|_{L^p(\B_\beta)}\|\partial_l  \hat{h}\|_{L^\infty(\B_\beta)} Q\lambda_{1}^{2} +\sum_{k,l} \|\partial_k  \hat{h}\|_{L^p(\B_\beta)}\|\partial_l  \hat{h}\|_{L^\infty(\B_\beta)} \frac{C(Q)}{C_{HR}}\lambda_{1}^{2}\\
\leq& 2m^2 \|D^2\hat{h}\|_{L^p(\B_\beta)}\|D  \hat{h}\|_{L^\infty(\B_\beta)} Q\lambda_{1}^{2} +m^2 \|D \hat{h}\|_{L^p(\B_\beta)}\|D \hat{h}\|_{L^\infty(\B_\beta)} \frac{C(Q)}{C_{HR}}\lambda_{1}^{2}\\
\leq& 2m^2 \|D^2\hat{h}\|_{L^p(\B_\beta)}\||\nabla h_x|\circ\varphi_{H}^{-1}\|_{L^\infty(\B_\beta)} Q^{3/2}\lambda_{1} \\
&+m^2 \||\nabla h_x|\circ\varphi_{H}^{-1}\|_{L^p(\B_\beta)}\||\nabla h_x|\circ\varphi_{H}^{-1}\|_{L^\infty(\B_\beta)} \frac{QC(Q)}{C_{HR}},
\end{align*}
where we are denoting by $D\hat{h}$ and $D^{2}\hat h$, respectively, the Euclidean gradient and the Euclidean Hessian of $\hat{h}$, and we have used the fact that
\begin{equation}\label{est_D}
|D\hat h|^2 = \partial_k\hat h\partial_l\hat h \delta^{kl}\leq \lambda_{1}^{-2}Q\partial_k\hat h\partial_l\hat h \hat g^{kl} = \lambda_{1}^{-2}Q|\nabla h_x|^2\circ\varphi_{H}^{-1}.
\end{equation}
Reasoning as in \eqref{est_grad} we get 
\begin{align*}
\|\partial_j (|\nabla h_x|^2\circ\varphi_{H}^{-1})\|_{L^p(\B_\beta)} 
\leq& 2m^2C \|D^2\hat{h}\|_{L^p(\B_\beta)}\|r^\eta\circ\varphi_{H}^{-1}\|_{L^\infty(\B_\beta)} Q^{3/2}\lambda_{1} \\
&+m^2 C\|r^\eta\circ\varphi_{H}^{-1}\|_{L^p(\B_\beta)}\|r^\eta\circ\varphi_{H}^{-1}\|_{L^\infty(\B_\beta)}  \frac{QC(Q)}{C_{HR}}\\
\leq& C\lambda_{1} \left(r(x)+\frac{C_{HR}}{2\lambda_{1}}\right)^\eta \|D^2\hat{h}\|_{L^p(\B_\beta)} +C\left(r(x)+\frac{C_{HR}} {2\lambda_{1}}\right)^{2\eta}.
\end{align*}
\medskip

\noindent\textsc{\underline{Step 4}}: \textit{estimate of $||D^{2}\hat{h}||_{L^{p}(\mathbb{B}_{\beta})}$ by a  Calder\'on-Zygmund inequality.}
\medskip

Let $\phi\in C^\infty_c(\B_{2\beta})$ be such that $0\leq\phi\leq 1$ and $\phi\equiv 1$ on $\B_\beta$ and $\max\{\|\nabla \phi\|_\infty;\|\Delta \phi\|_\infty\}<C_1$ for some $C_1=C_1(\beta,m)\in\mathbb R$. According to the Calder\'on-Zygmund inequality, \cite[Corollary 9.10]{GT}, there exists a constant $C_2=C_2(m,p)>0$ such that
\begin{align}\label{CZ}
\|D^2\hat h\|_{L^p(\B_\beta)}
&\leq \|D^2(\phi\hat h)\|_{L^p(\B_{2\beta})}\\
&\leq C_2 \|\Delta_0(\phi\hat h)\|_{L^p(\B_{2\beta})}\nonumber\\
&\leq C_2\left(\|\phi\Delta_0\hat h\|_{L^p(\B_{2\beta})}+\|\hat h\Delta_0\phi\|_{L^p(\B_{2\beta})}+ 2 \|D\hat h\cdot D\phi\|_{L^p(\B_{2\beta})}\right)\nonumber\\
&\leq C_2\left(\|\Delta_0\hat h\|_{L^p(\B_{2\beta})}+C_1\|\hat h\|_{L^p(\B_{2\beta})}+ 2C_1 \|D\hat h\|_{L^p(\B_{2\beta})}\right),\nonumber
\end{align}
where $\Delta_0=\sum_i \partial_i\partial_i$ is the Euclidean Laplacian.

Now, 
\begin{align*}
|\Delta_0 \hat h| &= |\partial_k\partial_j \hat h  \delta^{kj}|\\
&\leq |\partial_k\partial_j \hat h  \hat g^{kj}|Q\lambda_{1}^{-2}
=Q\lambda_{1}^{-2}|\Delta h_x|\circ\varphi_{H}^{-1}\\
&\leq CQ\lambda_{1}^{-2} \left(r^{2\eta}\circ\varphi_{H}^{-1}\right).
\end{align*}
Accordingly, 
\begin{equation*}
\|\Delta_0\hat h\|_{L^p(\B_{2\beta})}\leq C\lambda_{1}^{-2} \left(r(x)+\frac{C_{HR}}{\lambda_{1}}\right)^{2\eta}.
\end{equation*}
As in \eqref{est_hath} we have that $\|\hat h\|_{L^\infty(\B_{2\beta})}\leq C$, so that $\|\hat h\|_{L^p(\B_{2\beta})}\leq C$. Moreover, using \eqref{est_D},
\[
\|D\hat h\|_{L^p(\B_{2\beta})}\leq
\lambda_{1}^{-1}\sqrt Q\||\nabla h_x|\circ\varphi_{H}^{-1}\|_{L^p(\B_{2\beta})}\leq\lambda_{1}^{-1}C \left(r(x)+\frac{C_{HR}}{\lambda_{1}}\right)^\eta.
\]

Inserting these estimates in \eqref{CZ} gives
\begin{align}
\|D^2\hat h\|_{L^p(\B_\beta)}
\leq& C_2\left[C\lambda_{1}^{-2} \left(r(x)+\frac{C_{HR}}{\lambda_{1}}\right)^{2\eta}+ C\left(1 + \lambda_{1}^{-1} \left(r(x)+\frac{C_{HR}}{\lambda_{1}}\right)^\eta\right)\right]\nonumber\\
\leq& C.\nonumber
\end{align}
Coming back to Step 3, 
\begin{align*}
\|\partial_j (|\nabla h_x|^2\circ\varphi_{H}^{-1})\|_{L^p(\B_\beta)} 
&\leq C r^{2\eta}
\end{align*}
and 
\begin{equation}\label{EstHoldNablah}
[|\nabla h_x|^2\circ\varphi_{H}^{-1}]_{C^{0,\alpha}_{L^{\infty}}(\B_\beta))}\leq C r^{2\eta}.
\end{equation}
\medskip

\noindent\textsc{\underline{Step 5}}: \textit{estimate of $|\mathrm{Hess}\, h|$.}
\medskip

Using \eqref{EstHoldNablah} and \eqref{SecondAddendHold}, we get by \eqref{EstHoldf} that
\[
\ \frac{\beta^{\alpha}}{\lambda_{1}^{2}}[\hat{f}]_{C^{0,\alpha}_{L^{\infty}}(\mathbb{B}_{\beta})}\leq C,
\]
and hence, by \eqref{Est1}, \eqref{est_hath} and \eqref{9half}, 
\[
\ |\partial^{2}_{ij}h_{x}(x)|= |\partial^{2}_{ij}\hat{h}(0)|\leq C.
\]
Recalling now that
\[
\ \nabla_{i}\nabla_{j}h_{x}=\partial^{2}_{ij}h_{x}-\Gamma_{ij}^{k}\partial_{k}h_{x},
\]
we can compute that
\begin{align*}
\left|\mathrm{Hess}\,h_{x}\right|(x)=&\left[g^{ik}g^{jl}\left(\partial^{2}_{ij}h_{x}-\Gamma_{ij}^{s}\partial_{s}h_{x}\right)\left(\partial^{2}_{kl}h_{x}-\Gamma_{kl}^{t}\partial_{t}h_{x}\right)\right]^{\frac{1}{2}}(x)\\
\leq&\left[g^{ik}g^{jl}\partial^{2}_{ij}h_{x}\partial^{2}_{kl}h_{x}+g^{ik}g^{jl}\Gamma_{ij}^{s}\Gamma_{kl}^{t}\partial_{s}h_{x}\partial_{t}h_{x}\right.\\
&\left.-2g^{ik}g^{jl}\partial^{2}_{ij}h_{x}\Gamma_{kl}^{t}\partial_{t}h_{x}\right]^{\frac{1}{2}}(x).
\end{align*}
Since 
\begin{align*}
|g^{ik}g^{jl}\partial^{2}_{ij}h_{x}\partial^{2}_{kl}h_{x}|(x)\leq&Q^{2}\lambda_{1}^{4}\left|\delta^{ik}\delta^{jl}\partial^{2}_{ij}h_{x}(x)\partial^{2}_{kl}h_{x}\right|(x)\\
=&Q^{2}\lambda_{1}^{4}|\partial_{ij}h_{x}|^{2}(x)\leq C\lambda_{1}^{4}\leq Cr^{4\eta}(x),\\
|g^{ik}g^{jl}\Gamma_{ij}^{s}\Gamma_{kl}^{t}\partial_{s}h_{x}\partial_{t}h_{x}|(x)\leq&\frac{9}{4}CQ^{5}\frac{(Q-1)^2}{C_{HR}^{2}}\lambda_{1}^{2}r^{2\eta}(x)\\&\leq Cr^{4\eta}(x),\\
|2g^{ik}g^{jl}\partial^{2}_{ij}h_{x}\Gamma_{kl}^{t}\partial_{t}h_{x}|(x)\leq&2C^2Q^{6}\sqrt{Q}\frac{(Q-1)}{C_{HR}}\lambda_{1}^{4}\lambda_{1}^{-1}r^{\eta}(x)\\&\leq Cr^{4\eta}(x),
\end{align*}
we eventually obtain that
\begin{equation}\label{EstHess}
\left|\mathrm{Hess}\,h\right|(x)=\left|\mathrm{Hess}\, h_x\right|(x)\leq Cr^{2\eta}(x).
\end{equation}
\medskip

We have thus  proved the following
\begin{theorem}\label{HCOSubQuadr}
Let $(M, g)$ be a complete Riemannian manifold and $o\in M$ a fixed reference point, $r(x)\doteq \mathrm{dist}_{g}(x,o)$. Suppose that for some $0<\eta\leq1$, some $D>0$ and some $i_0>0$,
\[
\ |\mathrm{Ric}_{g}|(x)\leq D^2(1+r(x)^2)^{\eta},\quad\mathrm{inj}_{g}(x)\geq \frac{i_0}{D(1+r(x))^{\eta}}>0\quad\mathrm{on}\,\,M.
\]
Then there exists an exhaustion function $h\in C^{\infty}(M)$ such that, for some positive constant $C$ independent of $x$ and $o$, we have that
\begin{itemize}
\item[(i)]$C^{-1}r(x)^{1+\eta}\leq h(x)\leq C\max\left\{r(x)^{1+\eta}, 1\right\}$ on $M$;
\item[(ii)] $|\nabla h|\leq Cr^{\eta}$ on $M\setminus\bar{B}_{2}(o)$;
\item[(iii)] $\left|\mathrm{Hess}\,h\right|(x)\leq Cr(x)^{2\eta}$ on $M\setminus\bar{B}_{2}(o)$
\end{itemize}
\end{theorem}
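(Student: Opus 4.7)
The plan is to promote the Bianchi--Setti exhaustion $h$ of Step 0 to one with a full Hessian bound, using harmonic coordinates whose $g$-radius shrinks at infinity like $1/\lambda_1$, with $\lambda_1\sim r^\eta$. The starting observation is that $h$ already satisfies (i) and (ii) and solves the semilinear Poisson equation $\Delta h = |\nabla h|^2 -\theta\tilde r^{2\eta}$; on each sufficiently small ball we can read this as a uniformly elliptic PDE whose right-hand side $f$ is controlled by $r^{2\eta}$, and extract a pointwise Hessian bound from it. The scaling that makes this work is exactly the one from Section \ref{HarmCoordResc}: multiplying the metric by $\lambda_1^2$ brings $|\mathrm{Ric}|\leq 1$ and $\mathrm{inj}\geq i_0$ on a $g_\lambda$-unit ball, so Anderson's theorem (Proposition \ref{HarmRadEst}) provides, after undoing the rescaling, a harmonic chart on $B_{C_{HR}/\lambda_1}(x)$ with uniform $C^{1,\alpha}$ bounds on $\lambda_1^2 g_{ij}$ and $\lambda_1^{-2} g^{ij}$.

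In these coordinates, after further rescaling the coordinate ball to a fixed Euclidean ball $\B_2$ by a factor $\beta\sim 1/\lambda_1$, the Poisson equation becomes $\hat g^{ij}(\beta\cdot)\partial^2_{ij}\hat h_{\beta,x}=\hat f_\beta$, uniformly elliptic with $C^{0,\alpha}$ coefficients whose norms are independent of $x$. Then I would invoke the pointwise interior Schauder estimate (e.g.\ \cite[Theorem 5.20]{HanLin}) to bound $|\partial^2_{ij}\hat h_{\beta,x}(0)|$ in terms of $\|\hat h_{\beta,x}\|_{L^\infty(\B_1)}$, $\|\hat f_\beta\|_{L^\infty(\B_1)}$ and $[\hat f_\beta]_{C^{0,\alpha}_{L^\infty}}(0)$. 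The first two are immediate from the gradient bound on $h$ together with the choice of scale, which yields $\|\hat h\|_{L^\infty(\B_\beta)}\leq C$ and $\lambda_1^{-2}\|\hat f\|_{L^\infty(\B_\beta)}\leq C$.

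The main obstacle is the H\"older seminorm of $\hat f$, and specifically of $|\nabla h_x|^2\circ\varphi_H^{-1}$: this is where the semilinearity of \eqref{Poisson} bites, since unlike the parabolic approach of Tam and \cite{RimoldiVeronelli} one cannot treat the right-hand side as a cheap homogeneous forcing term. To control it I would use the Morrey embedding $W^{1,p}(\B_\beta)\hookrightarrow C^{0,\alpha}(\B_\beta)$ for some $p>m$ with $\alpha=1-m/p$, reducing the job to an $L^p$ estimate on the Euclidean gradient of $|\nabla h_x|^2\circ\varphi_H^{-1}$. Expanding in coordinates produces a term linear in the Euclidean Hessian $D^2\hat h$ multiplied by the Euclidean gradient $D\hat h$ (bounded in $L^\infty$ via (ii)), plus lower-order pieces. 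So the core estimate becomes $\|D^2\hat h\|_{L^p(\B_\beta)}\leq C$, which I would obtain from the flat Calder\'on--Zygmund inequality applied to $\phi\hat h$ for a standard Euclidean cutoff $\phi$ on $\B_{2\beta}$; the key input is that $\Delta_0\hat h$ is comparable, up to $\lambda_1^{-2}$, to $\Delta h\circ\varphi_H^{-1}=|\nabla h|^2-\theta\tilde r^{2\eta}$, itself bounded by $r^{2\eta}$, so that all powers of $\lambda_1$ cancel. Feeding this $L^p$-Hessian bound back through Morrey closes the loop and gives $\beta^\alpha\lambda_1^{-2}[\hat f]_{C^{0,\alpha}_{L^\infty}}(\B_\beta)\leq C$.

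Finally I would convert the Euclidean Hessian of $\hat h$ at the origin back into $|\mathrm{Hess}\,h|(x)$ using the coordinate formula $\nabla_i\nabla_j h_x=\partial^2_{ij}h_x-\Gamma^k_{ij}\partial_k h_x$. The harmonic-coordinate bounds on $g_{ij}$, $g^{ij}$ and their first derivatives control $\Gamma^k_{ij}$ with the right powers of $\lambda_1$, and a direct expansion shows that each of the three resulting contributions is $\leq Cr^{4\eta}$ in its square, hence $|\mathrm{Hess}\,h|(x)\leq Cr^{2\eta}(x)$, proving (iii). The hardest part of the plan is the semilinear bootstrap Morrey~$\to$~Calder\'on--Zygmund~$\to$~Schauder in the third paragraph: every occurrence of $\lambda_1$ must cancel, and this works only because the scale $\beta\sim 1/\lambda_1$ is chosen precisely so that $\lambda_1^{-2}\cdot r^{2\eta}$ is bounded on each harmonic ball and the geometry at scale $\beta$ looks Euclidean with constants independent of $x$.
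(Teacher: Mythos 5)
Your proposal follows the paper's own proof essentially step for step: starting from the Bianchi--Setti exhaustion function and its Poisson equation, passing to harmonic coordinates of $g$-radius $\sim 1/\lambda_1$ rescaled to a fixed Euclidean ball, applying the pointwise Schauder estimate, bounding the H\"older seminorm of the semilinear right-hand side via Morrey embedding fed by a flat Calder\'on--Zygmund bound on $\|D^2\hat h\|_{L^p(\B_\beta)}$, and finally converting the coordinate Hessian back to the covariant one through the Christoffel symbols. The scaling bookkeeping you single out as the crux --- choosing $\beta\sim 1/\lambda_1$ so that every power of $\lambda_1$ cancels against a power of $r^\eta$ --- is exactly what makes the paper's Steps 2--5 close.
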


Finally note that the first part of Theorem \ref{HCOSubQuadr_coro} is equivalent to Theorem \ref{HCOSubQuadr} up to introduce the new function $H\in C^\infty(M)$ defined by $H=h^{\frac{1}{1+\eta}}$.


\subsection{Sub-quadratic sectional curvature growth (no assumptions on injectivity radius)}

Let $o\in M$, $r(x)\doteq \mathrm{dist}_{g}(x,o)$ and let us assume that, for some $0<\eta\leq 1$ and $D>0$,
\begin{equation}\label{HpSect}
\ |\mathrm{Sect}_{g}|(x)\leq D^2(1+r(x)^2)^{\eta}.
\end{equation}
As in Step 0 of Subsection \ref{SubQuadrRic}, we start with the exhaustion function $h$ given in \cite{BianchiSetti}.\\
Given $R_{0}\in\mathbb{R}^{+}$, to be chosen later, and $x\in M$ such that $r(x)>1+R_0$,  we have that on $B_{R_0}(x)$
\[
\ |\mathrm{Sect}_{g}|\leq D^2(1+(R_0+r(x))^2)^{\eta}\doteq K_{x,R_0}.
\]
By a localized version of the Cartan-Hadamard theorem (see e.g. \cite[Lemma 2.7]{GuneysuPigola_17}) we have that for every $0<R<\min\left\{\pi/\sqrt{K_{x,R_0}},R_0\right\}$, there exists  a smooth complete Riemannian manifold $(\bar{M},\bar{g})$, $\bar{x}\in \bar{M}$, and a smooth surjective local isometry
\[
\ F\doteq F_{g,x,R}:B_{R}^{\bar{g}}(\bar{x})\to B_{R}^{g}(x), 
\]
such that
\begin{itemize}
\item $F(\bar{x})=x$;
\item $\mathrm{inj}_{\bar{g}}(\bar{x})\geq R$;
\item $|\mathrm{Sect}_{\bar{g}}|\leq K_{x,R_0}$ on $B_{R}^{\bar{g}}(\bar{x})$;
\item $F(B_{r}^{\bar{g}}(\bar{x}))=B_{r}^{g}(x)$, for all $0<r<R$.
\end{itemize}
In particular, for every $\bar{y}\in B^{\bar{g}}_{R/2}(\bar{x})$ ,we have that
\begin{equation}\label{Sect1}
|\mathrm{Sect}_{\bar{g}}|(\bar{y})\leq K_{x,R_0},\quad
\mathrm{inj}_{\bar{g}}(\bar{y})\geq d_{\bar{g}}\left(\bar{y},\partial B_{R}^{\bar{g}}(\bar{x})\right)\geq \frac{R}{2}.
\end{equation}
We define $\bar{h}_{x}:B_{R}^{\bar{g}}(\bar{x})\to\mathbb{R}$ by
\[
\ \bar{h}_{x}(\bar{y})=h(F(\bar{y}))-h(F(\bar{x})).
\]
Then $\bar{h}_{x}(\bar{x})=0$, and
\begin{itemize}
\item $|\bar{\nabla}\bar{h}_{x}|\leq C\bar{r}^{\eta}$,
\item $|\bar{\Delta}\bar{h}_{x}|\leq C\bar{r}^{2\eta}$,
\item $|\mathrm{Hess}^{\bar{g}}\bar{h}_{x}|_{\bar{g}}(\bar{y})=|\mathrm{Hess}^{g}h|_{g}(y)\,\mathrm{on}\,B_{R}^{\bar{g}}(\bar{x})$,
\end{itemize}
with $\bar{r}\doteq r\circ F$. Moreover, by \eqref{Poisson},
\begin{equation*}
\bar{\Delta}\bar{h}_{x}=|\bar{\nabla}\bar{h}_{x}|_{\bar{g}}^{2}-\theta(\tilde{r}\circ F)^{2\eta}\doteq\bar{f}.
\end{equation*}
Letting  $\lambda_{R_0}^2\doteq (m-1)K_{x,R_0}$, we set
\[
\ \bar{g}_{\lambda}=\lambda_{R_0}^2\bar{g}.
\]
Then, by \eqref{Sect1}
\begin{equation*}
|\mathrm{Ric}_{\bar{g}_{\lambda}}|(\bar{y})\leq 1,\quad
\mathrm{inj}_{\bar{g}_{\lambda}}(\bar{y})\geq \lambda_{R_0}\frac{R}{2}.
\end{equation*}
Assuming that $R_0\geq \left(\frac{\pi}{D}\right)^{\frac{1}{1+\eta}}$, we can take $R=\frac{\pi}{2\sqrt{K_{x,R_0}}}$, getting that
\[
\ \mathrm{inj}_{\bar{g}_{\lambda}}(\bar{y})\geq \frac{\sqrt{m-1}\pi}{4}\doteq i_{0}.
\]
Given $\alpha\in(0,1)$, $Q>1$ and $\delta>0$, Proposition \ref{HarmRadEst} hence yields that there exists a constant $C_{HR}(m,Q,\alpha,\delta, i_0)$ such that for every $\bar{y}\in B^{\bar{g}}_{\frac{R}{2}-\frac{\delta}{\lambda_{R_0}}}(\bar{x})$ we can find on $B^{\bar{g}}_{\frac{C_{HR}}{\lambda_{R_0}}}(\bar{y})$ harmonic coordinates with respect to which
\begin{itemize}
\item[(i)] $Q^{-1}\lambda_{R_0}^{-2}\delta_{ij}\leq \bar{g}_{ij}\leq Q\lambda_{R_0}^{-2}\delta_{ij}$;
\item[(ii)] $\sum_{s}\lambda_{R_0}^{2}C_{HR}\sup\left|\partial_{s}\bar{g}_{ij}(\bar{y})\right|+\sum_{s}C_{HR}^{1+\alpha}\lambda_{R_0}^{2-\alpha}\sup_{\bar{y}\neq \bar{z}}\frac{\left|\partial_{s}\bar{g}_{ij}(\bar{z})-\partial_{s}\bar{g}_{ij}(\bar{y})\right|}{d_{\bar{g}}(\bar{y},\bar{z})^{\alpha}}\leq Q-1$;
\end{itemize}
and thus also
\begin{itemize}
\item[(i')] $Q^{-1}\lambda_{R_0}^{2}\delta^{ij}\leq \bar{g}^{ij}\leq Q\lambda_{R_0}^2\delta^{ij}$;
\item[(ii')] $\sum_{s}\lambda_{R_0}^{-2}C_{HR}\sup\left|\partial_{s}\bar{g}^{ij}(y)\right|+\sum_{s}C_{HR}^{1+\alpha}\sup_{\bar{y}\neq \bar{z}}\lambda_{R_0}^{-2-\alpha}\frac{\left|\partial_{s}\bar{g}^{ij}(z)-\partial_{s}\bar{g}^{ij}(y)\right|}{d_{\bar{g}}(\bar{y},\bar{z})^{\alpha}}\leq C(Q)$,
\end{itemize}
for some constant C(Q). Traveling through again Step 2, 3, 4 and 5 of Subsection \ref{SubQuadrRic} we eventually get that
\[
\ |\mathrm{Hess}h|_{g}(x)=|\mathrm{Hess}\bar{h}_{x}|_{\bar{g}}(x)\leq C\bar{r}^{2\eta}(\bar{x})=Cr^{2\eta}(x).
\]
We have thus  proved the following
\begin{theorem}\label{HCOSubQuadrSect}
Let $(M, g)$ be a complete Riemannian manifold and $o\in M$ a fixed reference point, $r(x)\doteq \mathrm{dist}_{g}(x,o)$. Suppose that for some $0<\eta\leq1$ and $D>0$, 
\[
\ |\mathrm{Sect}_{g}|(x)\leq D^2(1+r(x)^2)^{\eta}.
\]
Then there exists an exhaustion function $h\in C^{\infty}(M)$ such that, for some positive constants $C>1$ independent of $x$ and $o$ and for some radius $R_0$, we have that
\begin{itemize}
\item[(i)]$C^{-1} r(x)^{1+\eta}\leq h(x)\leq C\max\left\{r(x)^{1+\eta}, 1\right\}$ on $M$;
\item[(ii)] $|\nabla h|\leq Cr^{\eta}$ on $M\setminus\bar{B}_{1+R_0}(o)$;
\item[(iii)] $\left|\mathrm{Hess}\,h\right|(x)\leq Cr(x)^{2\eta}$ on $M\setminus\bar{B}_{1+R_0}(o)$
\end{itemize}
\end{theorem}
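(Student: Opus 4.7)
The plan is to reduce the sectional curvature case to the Ricci-curvature case already handled in Theorem \ref{HCOSubQuadr}, compensating for the missing injectivity radius hypothesis by lifting locally to a Riemannian manifold of bounded geometry via a localized Cartan--Hadamard theorem. The same function $h$ from the Bianchi--Setti construction in Step 0 of Subsection \ref{SubQuadrRic} will be used, so properties (i) and (ii) come for free (they depend only on the lower Ricci bound, which is implied by the two-sided sectional bound). The entire task is therefore to upgrade (ii) to the Hessian estimate (iii).

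Fix a parameter $R_0$ to be chosen large later and take $x\in M$ with $r(x)>1+R_0$. On $B_{R_0}^g(x)$ one has $|\mathrm{Sect}_g|\leq K_{x,R_0}\doteq D^2(1+(R_0+r(x))^2)^{\eta}$, so the localized Cartan--Hadamard theorem (\cite[Lemma 2.7]{GuneysuPigola_17}) produces $(\bar M,\bar g)$, $\bar x \in \bar M$, and a surjective local isometry $F\colon B_R^{\bar g}(\bar x)\to B_R^g(x)$ for any $R<\min\{\pi/\sqrt{K_{x,R_0}},R_0\}$, with $\mathrm{inj}_{\bar g}(\bar x)\geq R$. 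Choosing $R=\pi/(2\sqrt{K_{x,R_0}})$ and requiring $R_0\geq(\pi/D)^{1/(1+\eta)}$ ensures $R<R_0$. I then pull back $h$ by setting $\bar h_x\doteq h\circ F-h(x)$ on $B_R^{\bar g}(\bar x)$. Since $F$ is a local isometry, $\bar h_x$ satisfies the same semilinear Poisson equation \eqref{Poisson}, and the pointwise norms $|\bar\nabla \bar h_x|_{\bar g}$, $|\bar\Delta \bar h_x|$, $|\mathrm{Hess}^{\bar g}\bar h_x|_{\bar g}$ coincide with the corresponding $g$-quantities of $h$ at $F(\cdot)$.

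The rescaled metric $\bar g_\lambda\doteq\lambda_{R_0}^2\bar g$ with $\lambda_{R_0}^2\doteq(m-1)K_{x,R_0}$ then satisfies $|\mathrm{Ric}_{\bar g_\lambda}|\leq 1$ on the interior of $B_R^{\bar g}(\bar x)$, and by the choice of $R$ the injectivity radius of $\bar g_\lambda$ is bounded below by $i_0\doteq \sqrt{m-1}\,\pi/4$. This is exactly the normalized setting for Proposition \ref{HarmRadEst}, which produces harmonic charts of $\bar g$-radius $C_{HR}/\lambda_{R_0}$ about each $\bar y\in B^{\bar g}_{R/2-\delta/\lambda_{R_0}}(\bar x)$. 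In these charts the metric coefficients of $\bar g$ enjoy the estimates (i)--(ii')\,, which are the only ingredients used in Steps 2--5 of Subsection \ref{SubQuadrRic}. Running those steps verbatim on $\bar h_x$ (pointwise Schauder for the semilinear equation, Sobolev embedding to estimate the H\"older seminorm of $|\bar\nabla \bar h_x|^2$, local Euclidean Calder\'on--Zygmund on a standard cut-off, and translating back through the harmonic chart) yields $|\mathrm{Hess}^{\bar g}\bar h_x|_{\bar g}(\bar x)\leq C\, r(x)^{2\eta}$. Since $|\mathrm{Hess}^g h|_g(x)=|\mathrm{Hess}^{\bar g}\bar h_x|_{\bar g}(\bar x)$ by the local isometry, property (iii) follows on $M\setminus \bar B_{1+R_0}(o)$.

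The main technical obstacle is the calibration of the three scales in play: the Cartan--Hadamard radius $R\sim r(x)^{-\eta}$, the harmonic radius $C_{HR}/\lambda_{R_0}\sim r(x)^{-\eta}$, and the Euclidean unit ball in which the Schauder, Sobolev and Calder\'on--Zygmund estimates are applied after rescaling by $\lambda_{R_0}$. One has to verify that $R/2-\delta/\lambda_{R_0}\geq C_{HR}/\lambda_{R_0}$ uniformly as $r(x)\to\infty$, which reduces to the scale-invariant inequality $\sqrt{K_{x,R_0}}/\lambda_{R_0}=1/\sqrt{m-1}\geq 2(\delta+C_{HR})/\pi$, true provided $R_0$ is chosen large enough depending only on $m$, $D$, $\eta$, $Q$, $\alpha$, $\delta$ and $i_0$. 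Once this matching of radii is secured, the rest of the argument is a straightforward transcription of Subsection \ref{SubQuadrRic} with $\lambda_1$ replaced by $\lambda_{R_0}$ throughout.
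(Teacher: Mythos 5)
Your proposal reproduces the paper's own argument essentially verbatim: the same localized Cartan--Hadamard lift from \cite[Lemma 2.7]{GuneysuPigola_17} to produce a bounded-geometry cover, the same pulled-back function $\bar h_x = h\circ F - h(x)$, the same rescaling by $\lambda_{R_0}^2 = (m-1)K_{x,R_0}$ with $R=\pi/(2\sqrt{K_{x,R_0}})$ yielding $i_0=\sqrt{m-1}\,\pi/4$, and the same re-run of Steps 2--5 of Subsection \ref{SubQuadrRic} through harmonic coordinates. The only (inessential) imprecision is your claim that the radius-matching inequality $1/\sqrt{m-1}\geq 2(\delta+C_{HR})/\pi$ is secured ``provided $R_0$ is chosen large enough'': that inequality is scale-invariant and does not involve $R_0$ at all, and is instead arranged by shrinking the auxiliary parameters $\delta$ and $C_{HR}$, which is always possible since a smaller harmonic radius is still a harmonic radius.
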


Once again, defining $H\in C^\infty(M)$ by $H=h^{\frac{1}{1+\eta}}$, we get that the second part of Theorem \ref{HCOSubQuadr_coro} is equivalent to Theorem \ref{HCOSubQuadrSect}.

\section{Hessian cut-off functions}\label{HessCO}

In this section we construct (weak) Hessian cut-off functions starting from the distance-like functions obtained in the previous sections. This will permit to conclude the proof of Theorem \ref{th_main1}.

\begin{lemma}\label{HessCutOff}
Let $(M,g)$ be a complete Riemannian manifold and  $o\in M$ a fixed reference point. If there exists an exhaustion function  $h\in C^\infty(M)$ such that for some positive constants $D_{j}$, $\bar{\rho}>0$, $\beta>\varepsilon_{i}\geq 0$, we have that
 \begin{itemize}
 \item[(i)] $D_{1}r(x)^{\beta}\leq h(x) \leq D_{2} \max\left\{1,r(x)^{\beta}\right\}$ for every $x\in M$;
 \item[(ii)] $|\nabla h|(x)\leq D_{3}r(x)^{\beta-\varepsilon_{1}}$, for every $x\in M\setminus \bar{B}_{\bar{\rho}}(o)$;
\item[(iii)] $|\mathrm{Hess}(h)|(x)\leq D_{4}r(x)^{\beta-\varepsilon_{2}}$, for every $x\in M\setminus \bar{B}_{\bar{\rho}}(o)$.
 \end{itemize} 
Then given a $\gamma>(D_{2}/D_{1})^{1/\beta}$, there exists a family of cut-off functions $\left\{\chi_{R}\right\}$ such that
\begin{enumerate}
\item $\chi_R=1$ on $B_{R}(o)$ and $\chi_R=0$ on $M\setminus B_{\gamma R}(o)$;
\item $|\nabla \chi_R|\leq \frac{C_{1}}{R^{\varepsilon_{1}}}$;
\item $|\mathrm{Hess}(\chi_R)|\leq \frac{C_{2}}{R^{\min\left\{2\varepsilon_{1},\varepsilon_{2}\right\}}} $.
\end{enumerate}
In particular $\{\chi_n\}$ is a family of weak Hessian cut-off functions for every $\epsilon_1,\epsilon_2$ and of genuine Hessian cut-off functions whenever $\epsilon_1\epsilon_2>0$.
\end{lemma}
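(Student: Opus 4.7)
The plan is to build $\chi_R$ by pre-composing $h$ with a rescaled one-variable bump. Fix a smooth non-increasing $\psi\in C^\infty(\mathbb R)$ with $\psi\equiv 1$ on $(-\infty,D_2]$, $\psi\equiv 0$ on $[D_1\gamma^\beta,+\infty)$, and $\|\psi'\|_\infty+\|\psi''\|_\infty\le K$ for some constant $K$; such a $\psi$ exists precisely because $D_1\gamma^\beta>D_2$, which is the hypothesis $\gamma>(D_2/D_1)^{1/\beta}$. For $R\geq \max\{1,\bar\rho\}$ I would set
\[
\chi_R(x)\doteq \psi\!\left(\frac{h(x)}{R^\beta}\right),
\]
which is smooth on $M$ and compactly supported by completeness of $(M,g)$. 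For the remaining small values of $R$ the statement is trivial with constants adjusted.

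Property (1) is immediate from (i). On $B_R(o)$ we have $h(x)\le D_2\max\{1,R^\beta\}\le D_2 R^\beta$, so $h/R^\beta\le D_2$ and $\chi_R\equiv 1$; on $M\setminus B_{\gamma R}(o)$ one has $h(x)\ge D_1(\gamma R)^\beta$, so $h/R^\beta\ge D_1\gamma^\beta$ and $\chi_R\equiv 0$. The key observation for (2) and (3) is that on the set
\[
A_R\doteq\{x\in M:\psi'(h(x)/R^\beta)\neq 0\ \text{or}\ \psi''(h(x)/R^\beta)\neq 0\}\subset\{D_2<h/R^\beta<D_1\gamma^\beta\},
\]
the distance $r(x)$ is comparable to $R$. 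Indeed, from (i), $D_1 r^\beta\le h<D_1\gamma^\beta R^\beta$ gives $r<\gamma R$; conversely, $h>D_2 R^\beta\ge D_2$ combined with $h\le D_2\max\{1,r^\beta\}$ forces $r^\beta\ge h/D_2>R^\beta$, so $r>R$. Hence $R<r(x)<\gamma R$ on $A_R$, and in particular $A_R\subset M\setminus \bar B_{\bar\rho}(o)$ once $R\ge\bar\rho$, so the hypotheses (ii)–(iii) are available there.

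Now I would apply the chain rule:
\[
\nabla\chi_R=\frac{\psi'(h/R^\beta)}{R^\beta}\,\nabla h,\qquad
\mathrm{Hess}(\chi_R)=\frac{\psi''(h/R^\beta)}{R^{2\beta}}\,\nabla h\otimes\nabla h+\frac{\psi'(h/R^\beta)}{R^\beta}\,\mathrm{Hess}(h).
\]
Using (ii)–(iii) and the comparison $r\le\gamma R$ on $A_R$,
\[
|\nabla\chi_R|\le K\,R^{-\beta}D_3(\gamma R)^{\beta-\varepsilon_1}\le C_1 R^{-\varepsilon_1},
\]
\[
|\mathrm{Hess}(\chi_R)|\le K\,R^{-2\beta}D_3^2(\gamma R)^{2(\beta-\varepsilon_1)}+K\,R^{-\beta}D_4(\gamma R)^{\beta-\varepsilon_2}\le C_2\,R^{-\min\{2\varepsilon_1,\varepsilon_2\}},
\]
which are exactly (2) and (3). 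For the final assertion, the bounds are uniformly bounded whenever $\varepsilon_1,\varepsilon_2\ge 0$, yielding a family of weak Hessian cut-off functions; if in addition $\varepsilon_1\varepsilon_2>0$ then both $\varepsilon_1>0$ and $\min\{2\varepsilon_1,\varepsilon_2\}>0$, so the right-hand sides vanish as $R\to\infty$ and one gets genuine Hessian cut-offs.

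There is no real obstacle: once the ansatz $\chi_R=\psi(h/R^\beta)$ is chosen, everything reduces to the two-sided estimate $r\asymp R$ on $A_R$, which is encoded in the bracketing $D_1 r^\beta\le h\le D_2\max\{1,r^\beta\}$. The only points requiring minor care are the choice $\gamma>(D_2/D_1)^{1/\beta}$ (needed so that the supports of $\psi\equiv 1$ and $\psi\equiv 0$ can be separated) and the restriction to $R\ge\bar\rho$ (needed to apply the pointwise bounds on $|\nabla h|$ and $|\mathrm{Hess}(h)|$).
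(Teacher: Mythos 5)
Your proposal is correct and follows essentially the same construction as the paper: compose $h$ with a one-variable bump rescaled by a $\beta$-power of $R$ (the paper uses $\phi_R(t)=\phi(t/(D_1R^\beta))$, which is your $\psi(h/R^\beta)$ up to a harmless normalization of the variable). The paper leaves the chain-rule verification and the localization $r\asymp R$ on the transition annulus as "immediate to verify," which you correctly spell out.
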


\begin{proof}
Let $\Gamma=\frac{D_{2}}{D_{1}}\geq 1$, and $\gamma>\Gamma^{\frac{1}{\beta}}$ a real number. Let $\phi\in C^\infty (\mathbb{R},[0,1])$ be such that 
\begin{equation*}
\begin{aligned}
\phi|_{(-\infty,\Gamma]}=1,\quad&\phi|_{[\gamma^{\beta},\infty)}=0,&|\phi'|+|\phi''|\leq a,
\end{aligned}
\end{equation*}
for some $a>0$. For any $R> 0$, let $\phi_R\in C^\infty ([0,+\infty))$ be defined by 
\[
\ \phi_R(t)\doteq\phi\left(\frac{t}{D_{1}R^{\beta}}\right).
\]
Then
\begin{equation*}
\begin{aligned}
|\phi^{\prime}_{R}|\leq \frac{a}{D_{1}R^{\beta}},\quad&|\phi^{\prime\prime}_R|\leq \frac{a}{D_{1}^2R^{2\beta}}.
\end{aligned}
\end{equation*}
For each radius $R\gg1$, define $\chi_R:=\phi_R\circ h$. Then it is immediate to verify that $\left\{\chi_{R}\right\}$ meets the required properties.
\end{proof}

We are finally in the position to give the 
\begin{proof}[Proof (of Theorem \ref{th_main1}).]
Under the assumptions (a) or (b) of the theorem, applying respectively Theorem \ref{HCOSubQuadr} or Theorem \ref{HCOSubQuadrSect}, we get the existence of a distance-like function $h$ with suitably controlled growth of the derivatives up to the 2nd order. Hence Lemma \ref{HessCutOff} applies and guarantees the existence of a sequence of (weak)-Hessian cut-off functions. Then Theorem \ref{th_main1} is a direct consequence of Proposition \ref{PropGP3.6}, (b).
\end{proof}

\section{The special case $p=2$}\label{p2}

\begin{proof}[Proof of Theorem \ref{Dens-p=2}]
Let $\lambda^{2}(r(x))\doteq-D^2 (1+r(x)^2)$. By \cite[Corollary 2.3]{BianchiSetti} we know that there exist a large constant $\gamma>1$ and a sequence  of weak Laplacian cut-off functions $\left\{\chi_{n}\right\}\subset C_{c}^{\infty}(M)$ such that
\begin{enumerate}
\item $\chi_{n}\equiv1$ on $B_{n}(o)$;
\item $\mathrm{supp}(\chi_{n})\subset B_{\gamma n}(o)$;
\item $|\nabla \chi_{n}|\leq \frac{C_{1}}{n}$;
\item $|\Delta\chi_{n}|\leq C_{2}$.
\end{enumerate}
with $C_{1}$ and $C_{2}$ independent of $n$. 

As noticed in Remark \ref{rmk_dens}, it is sufficient to consider $f\in C^{\infty}(M)\cap W^{2,2}(M)$. We want to prove that $\chi_{n}f$ converges to $f$ in $W^{2,2}(M)$. By properties (1) and (2) and the dominated convergence theorem it follows that
\[
\ \int_{M}|f-\chi_{n}f|^2d\mathrm{vol}_{g}\to 0,
\]
as $n\to\infty$. Furthermore, by properties (1), (2), (3), and the dominated convergence theorem, we have that
\begin{align*}
\int_{M}|\nabla f-\nabla (\chi_{n}f)|^2d\mathrm{vol}_{g}=&\int_{M}|\nabla f- (\chi_{n}\nabla f+f\nabla\chi_{n})|^2d\mathrm{vol}_{g}\\
=&C\int_{M}f^2|\nabla\chi_{n}|^2d\mathrm{vol}_{g}+C\int_{M}(1-\chi_{n})^2|\nabla f|^2d\mathrm{vol}_{g}\to 0,
\end{align*}
as $n\to\infty$. We now note that
\begin{equation*}
\int_{M}|\mathrm{Hess}(\chi_{n}f)-\mathrm{Hess}f|^2d\mathrm{vol}_{g}=\int_{M}(1-\chi_{n})^2|\mathrm{Hess}f|^2d\mathrm{vol}_{g}+2\int_{M}|\nabla\chi_{n}|^2|\nabla f|^2d\mathrm{vol}_{g}+\int_{M}|f|^2|\mathrm{Hess}\chi_{n}|^2d\mathrm{vol}_{g}.
\end{equation*}
Reasoning as above we get that the first two terms on the RHS converge to $0$. About the last term, using Bochner formula and our curvature assumption, we have that
\begin{align*}
\mathrm{div}\left(f^{2}\frac{\nabla|\nabla\chi_{n}|^2}{2}\right)=&f^{2}\frac{\Delta|\nabla\chi_{n}|^2}{2}+f\left\langle\nabla f,\nabla|\nabla\chi_{n}|^2\right\rangle\\
=&f^{2}\left[|\mathrm{Hess}\chi_{n}|^2+\mathrm{Ric}_{g}(\nabla\chi_{n},\nabla\chi_{n})+\left\langle\nabla\chi_{n},\nabla\Delta\chi_{n}\right\rangle\right]+2f|\nabla\chi_{n}|\left\langle\nabla f, \nabla |\nabla\chi_{n}|\right\rangle\\
\geq&f^{2}|\mathrm{Hess}\chi_{n}|^2+f^{2}\mathrm{Ric}_{g}(\nabla\chi_{n},\nabla\chi_{n})+f^{2}\left\langle\nabla\chi_{n},\nabla\Delta\chi_{n}\right\rangle-\frac{f^2}{2}|\nabla|\nabla\chi_{n}||^2-2|\nabla\chi_{n}|^2|\nabla f|^2\\
\geq&\frac{f^2}{2}|\mathrm{Hess}\chi_{n}|^2-2|\nabla\chi_{n}|^2|\nabla f|^2-\lambda^{2}f^{2}|\nabla\chi_{n}|^2+\mathrm{div}\left(f^{2}\Delta\chi_{n}\nabla\chi_{n}\right)\\
&-2f\Delta\chi_{n}\left\langle\nabla f,\nabla \chi_{n}\right\rangle-f^{2}(\Delta\chi_{n})^2\\
\geq&\frac{f^2}{2}|\mathrm{Hess}\chi_{n}|^2-3|\nabla\chi_{n}|^2|\nabla f|^2-\lambda^{2}f^{2}|\nabla\chi_{n}|^2+\mathrm{div}\left(f^{2}\Delta\chi_{n}\nabla\chi_{n}\right)-2f^{2}(\Delta\chi_{n})^2
\end{align*}
Integrating, we get that
\begin{equation}\label{p2Conv}
\frac{1}{2}\int_{M}f^{2}|\mathrm{Hess}\chi_{n}|^2d\mathrm{vol}_{g}\leq\int_{M}\lambda^{2}f^{2}|\nabla\chi_{n}|^2d\mathrm{vol}_{g}+2\int_{M}f^{2}(\Delta \chi_{n})^2d\mathrm{vol}_{g}+3\int_{M}|\nabla f|^2|\nabla\chi_{n}|^2d\mathrm{vol}_{g}.
\end{equation}
By property (3), and the dominated convergence theorem the last term on the RHS of \eqref{p2Conv}  converges to $0$ as $n\to\infty$. Moreover, by properties (1) and (2), we have that $\nabla \chi_{n}$ and $\Delta\chi_{n}$ are supported in $B_{\gamma n}(o)\setminus B_{n}(o)$. Hence, using property (3), the definition of $\lambda$, and the fact that $f\in L^{2}(M)$, we have that
\begin{align*}
\int_{M}\lambda^{2}|\nabla\chi_{n}|^2f^{2}d\mathrm{vol}_{g}=&\int_{B_{\gamma n}(o)\setminus B_{n}(o)}\lambda^{2}|\nabla\chi_{n}|^2f^2d\mathrm{vol}_{g}\\
\leq&\int_{M\setminus B_{n}(o)}C\frac{1+\gamma^{2}n^2}{n^2}f^{2}d\mathrm{vol}_{g}\leq \tilde{C}\int_{M\setminus B_{n}(o)}f^{2}d\mathrm{vol}_{g}\to0,
\end{align*}
as $n\to\infty$. Similarly, by property (4),
\[
\ \int_{M}f^{2}(\Delta \chi_{n})^2d\mathrm{vol}_{g}\leq C_{2}^{2}\int_{M\setminus B_{n}(o)}f^{2}d\mathrm{vol}_{g}\to 0,
\]
as $n\to\infty$. This finally gives our claim.
\end{proof}

\section{A disturbed Sobolev inequality}\label{sec_sob}

In this section we first prove the following general disturbed Sobolev inequality. Then we will deduce Theorem \ref{th_sob} from the proof of Theorem \ref{th_sob_dist}.
\begin{theorem}\label{th_sob_dist}
Let $(M^m,g)$ be a smooth, complete non-compact Riemannian manifold without boundary. Let $o\in M$, $r(x)\doteq \mathrm{dist}_{g}(x,o)$ and suppose  that one of the following set of assumptions holds
\begin{itemize}
\item[(a)] for some $0<\eta\leq1$, some $D>0$ and some $i_0>0$,
\[
\ |\mathrm{Ric}_{g}|(x)\leq D^2(1+r(x)^2)^{\eta},\quad\mathrm{inj}_{g}(x)\geq \frac{i_0}{D(1+r(x))^{\eta}}>0\quad\mathrm{on}\,\,M.
\]
\item[(b)] for some $0<\eta\leq1$ and some $D>0$,
\[
\ |\mathrm{Sect}_{g}|(x)\leq D^2(1+r(x)^2)^\eta.
\]
\end{itemize}
Let $p\in[1,m)$ and $q=mp/(m-p)$ and let $(\alpha,\beta)\in\mathbb R\times\mathbb R$ satisfying $\beta/p-\alpha/q\geq 1/m$. Then there exist constants $A_1>0$, $A_2>0$, depending on $m$, $p$, $\alpha$, $\beta$ and the constant $C$ from Theorem \ref{HCOSubQuadr_coro}, such that for all $\varphi\in C^\infty_c(M)$ it holds
\begin{align*}
\left(\int_M V_{\alpha} |\varphi|^{q}d\mathrm{vol}_{ g}\right)^{\frac{1}{q}}
&\leq A'\left(\int_M V_\beta|\nabla \varphi|^pd\mathrm{vol}_{g}\right)^{\frac{1}{p}} + B' \left(\int_M  V_\beta|\varphi|^p H^{p\eta}  d\mathrm{vol}_{g}\right)^{\frac{1}{p}}.
\end{align*}
Here
\[
V_\alpha(x)\doteq (r(x)+1)^{-\alpha m\eta}\left(\mathrm{vol}_g(B^g_{R_1}(x))\right)^{-\alpha},\quad \text{with } R_1=R_1(x)\doteq\begin{cases}C^{-\eta}(r(x)-1)^{-\eta}&if\ r(x)>1+C^{-1}\\1&otherwise,\end{cases},
\]
and
\[
V_\beta(x)\doteq(\max\{1;r(x)-1\})^{-\beta m\eta}\left(\mathrm{vol}_g(B^g_{R_2}(x))\right)^{-\beta},\quad \text{with }  R_2=R_2(x)\doteq\frac{C^{-\eta}}{2}\min\{1;r(x)^{-\eta}\}.
\]
\end{theorem}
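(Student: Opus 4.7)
My plan is to prove Theorem~\ref{th_sob_dist} by a conformal rescaling argument. Starting from the function $H$ given by Theorem~\ref{HCOSubQuadr_coro}, I construct a metric $\tilde g$ conformally equivalent to $g$ with bounded Ricci curvature, then apply the Hebey-Varopoulos weighted Sobolev inequality \eqref{w-sob-HV} on $(M,\tilde g)$ to a judiciously chosen test function, and finally convert the resulting inequality back to $(M,g)$.

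\textbf{Setup of the conformal metric.} Let $\tilde H \in C^{\infty}(M)$ be a positive smooth function equal to $H$ outside a compact neighborhood of $o$, so that $\phi = \eta \log \tilde H$ is defined globally. Set $\tilde g = \tilde H^{2\eta}\, g$. The conformal transformation law
\begin{equation*}
\widetilde{\mathrm{Ric}} = \mathrm{Ric}_g - (m-2)\bigl(\nabla_g^2 \phi - d\phi \otimes d\phi\bigr) - \bigl(\Delta_g \phi + (m-2) |\nabla \phi|_g^2\bigr) g,
\end{equation*}
combined with the bounds $|\nabla \phi|_g + |\nabla_g^2 \phi|_g + |\Delta_g \phi| \leq C(1+r)^{\eta-1}$ (which follow from $H \asymp \max\{1,r\}$, $|\nabla H|_g \leq C$ and $|\mathrm{Hess}_g H| \leq C\max\{1,r^\eta\}$), yields $|\widetilde{\mathrm{Ric}}|_g \leq C(1+r)^{2\eta}$ and hence $|\widetilde{\mathrm{Ric}}|_{\tilde g} = \tilde H^{-2\eta}\,|\widetilde{\mathrm{Ric}}|_g \leq C$. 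In particular $\widetilde{\mathrm{Ric}} \geq -(m-1)\tilde D\, \tilde g$, so that \eqref{w-sob-HV} is available on $(M, \tilde g)$ for every admissible pair $(\alpha,\beta)$; completeness of $(M, \tilde g)$ follows from that of $(M,g)$ since $\tilde H^{2\eta}$ is bounded below by a positive constant. Moreover, since $\tilde H$ varies by a bounded factor over each $\tilde g$-unit ball (using $|\nabla \tilde H|_g \leq C$), one obtains
\begin{equation*}
\tilde v(x) \doteq \frac{1}{\mathrm{vol}_{\tilde g}(\tilde B^{\tilde g}_1(x))} \asymp \tilde H^{-m\eta}(x)\bigl(\mathrm{vol}_g(B^g_{\tilde H^{-\eta}(x)}(x))\bigr)^{-1},
\end{equation*}
whence $\tilde v^\alpha \asymp V_\alpha$ and $\tilde v^\beta \asymp V_\beta$ pointwise.

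\textbf{Test function substitution.} Apply \eqref{w-sob-HV} on $(M, \tilde g)$ with the parameters $(\alpha,\beta)$ (which satisfy the admissibility condition by hypothesis) to the test function $\psi = \varphi\, \tilde H^{a}$ with the specific choice $a = \eta(1-m/p)$. The three identities
\begin{equation*}
aq + m\eta = 0, \qquad (a-\eta)p + m\eta = 0, \qquad ap + m\eta = p\eta
\end{equation*}
(which are immediate once $q = mp/(m-p)$ is inserted), combined with $d\mathrm{vol}_{\tilde g} = \tilde H^{m\eta}\, d\mathrm{vol}_g$ and $|\nabla\cdot|_{\tilde g}^p = \tilde H^{-p\eta}|\nabla\cdot|_g^p$, precisely cancel all $\tilde H$-factors arising on the left-hand side and in the dominant gradient term on the right-hand side, and produce exactly the weight $\tilde H^{p\eta}\asymp H^{p\eta}$ on the $|\varphi|^p$-term. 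The product rule $|\nabla \psi|_g \leq \tilde H^{a}|\nabla \varphi| + |a|\tilde H^{a-1}|\nabla \tilde H| |\varphi|$ contributes an extra $|\varphi|^p$-term with weight $\tilde H^{-p} V_\beta$, which is dominated by $\tilde H^{p\eta} V_\beta$ since $\tilde H \geq c > 0$; this yields the stated inequality.

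\textbf{Main difficulty.} The most delicate point is the volume comparison $\tilde v \asymp V_1$: one must verify that a $\tilde g$-ball of radius $1$ centered at $x$ roughly coincides with the $g$-ball $B^g_{\tilde H^{-\eta}(x)}(x)$, and that $\tilde H$ is essentially constant on such a ball. Under hypothesis (a), such a small $g$-ball is contained within the injectivity radius of $x$ (since $\mathrm{inj}_g(x) \gtrsim r^{-\eta}(x)$), making the volume comparison straightforward. Under hypothesis (b), the absence of an injectivity radius bound forces us to invoke a localized Cartan-Hadamard argument in the spirit of Section~\ref{ContrExhaustFct}, or equivalently M\"uller's volume-non-collapsing theorem \cite{Mul} mentioned in the footnote after Theorem~\ref{th_sob}, to guarantee that these small balls remain embedded and enjoy the expected Euclidean-type volume estimate.
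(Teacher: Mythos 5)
Your overall strategy coincides with the paper's: conformally rescale by $\phi=\eta\ln H$ to get a metric $\tilde g=H^{2\eta}g$ with bounded Ricci curvature, apply the Hebey--Varopoulos disturbed Sobolev inequality on $(M,\tilde g)$, substitute the test function $\varphi H^{-m\eta/q}$ (your $a=\eta(1-m/p)$ equals $-m\eta/q$, so this is literally the same substitution the paper makes), and carry out the same power-counting. The identities you list, the handling of the extra gradient term, and the use of $H\geq c>0$ to absorb the weight $H^{-p}$ into $H^{p\eta}$ all match the paper.

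However, your ``Main difficulty'' paragraph misdiagnoses what is actually needed for the volume comparison. You invoke the injectivity radius lower bound in case (a) and a localized Cartan--Hadamard/M\"uller argument in case (b) to compare $\tilde g$-balls with $g$-balls. Neither is required here. The paper's Lemmas \ref{lem_low-est} and \ref{lem_up-est} establish the ball inclusions $B^{\tilde g}_1(x)\subset B^g_{R_1}(x)$ and $B^{\tilde g}_1(x)\supset B^g_{R_2}(x)$ by a purely conformal argument: one estimates the $\tilde g$-length (resp.\ $g$-length) of a minimizing geodesic using the pointwise two-sided bound $e^{\phi}=H^{\eta}\asymp\max\{1,r\}^\eta$ on the conformal factor, and this uses no curvature, no injectivity radius, and no embeddedness of small balls. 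Since $V_\alpha,V_\beta$ are expressed directly in terms of $g$-volumes $\mathrm{vol}_g(B^g_{R_i}(x))$ (not in terms of any Euclidean comparison), the ball inclusions together with the pointwise control $H^{m\eta}\asymp\max\{1,r\}^{m\eta}$ on the density $d\mathrm{vol}_{\tilde g}/d\mathrm{vol}_g$ suffice to turn $v^\alpha,v^\beta$ into $V_\alpha,V_\beta$ up to constants. The curvature/injectivity-radius input you are thinking of is needed only for Theorem \ref{th_sob}, where one must further replace $V_\alpha,V_\beta$ by constants using Croke's non-collapsing estimate; it plays no role in the present Theorem \ref{th_sob_dist}. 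You should replace the speculative appeals to Cartan--Hadamard and M\"uller with a direct geodesic-length estimate in the spirit of Lemmas \ref{lem_low-est}--\ref{lem_up-est}.
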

\begin{remark}{\rm
As we will see in \eqref{bound_vol}, a lower bound on the injectivity radius implies a lower bound on the volumes. Accordingly, Theorem \ref{th_sob_dist} applies in more general situations with respect to Theorem \ref{th_sob}, e.g. under the assumption (b), or in case the volumes of geodesics balls of $(M,g)$ increase at infinity. On the other hand, in Theorem \ref{th_sob_dist} the value of $q=pm/(m-p)$ is  fixed.
}\end{remark}

\begin{proof}[Proof (of Theorem \ref{th_sob_dist})]
Let $\phi\in C^{\infty}(M)$ be a positive function to be choosen later, and define a new conformal metric $\tilde g = e^{2\phi}g$. Then, for any $X\in TM$, we have that 
\[
\mathrm{Ric}_{\tilde g}(X,X)=\mathrm{Ric}_{g}(X,X) - (m-2)\left[\mathrm{Hess}(\phi)(X,X)-g(X,\nabla\phi)^2\right]+g(X,X)\left(\Delta \phi - (m-2)|\nabla\phi|^2\right)
\]
Accordingly 
\[
|\mathrm{Ric}_{\tilde g}|\leq C(m) e^{-2\phi}\left\{|\mathrm{Ric}_{g}|+ [|\mathrm{Hess}(\phi)|+|\nabla \phi|^2]\right\}
\]
Now, let $H\in C^\infty(M)$ be the exhaustion function given by Theorem \ref{HCOSubQuadr_coro}. Without loss of generality we can suppose that $H>1$ on $M$. Indeed, if it is not the case, one can replace $H$ with a new function which approximates $\max\{2;H\}$ uniformly on $M$, and in $C^2$-norm outside a compact set. We recall the properties of $H$:  for some positive $C>1$,
\begin{itemize}
\item $\max\left\{1, C^{-1}r\right\}\leq H\leq C\max\left\{1, r\right\}$;
\item $|\nabla H|(x)\leq C$;
\item $|\mathrm{Hess}\,H|(x)\leq C\max\left\{r^{\eta}, 1\right\}$.
\end{itemize}
Choose $\phi=\eta\ln (H)$. In particular, since $e^\phi=H^\eta >1$, $(M,\tilde g)$ is complete. Moreover,
\[
|\nabla\phi|(x) = \eta\left|\frac{\nabla H}{H}\right|\leq \eta C\] and 
\[
|\mathrm{Hess} \phi|(x) = \eta\left|\frac{\mathrm{Hess}\, H}{H}-\frac{dH\otimes dH}{H^2}\right|\leq  2\eta C^2\max\left\{r(x)^{\eta},1\right\}. \]
Thus there exists a constant $\tilde C>0$ depending on $m, \eta$ and $C$ such that
 \begin{equation}\label{bound_ric}
|\mathrm{Ric}_{\tilde g}|(x)\leq \frac{C(m)}{H^{2\eta}}
\left\{\lambda(r(x))+ 2\eta C^2\max\left\{r(x)^{\eta}, 1\right\}+\eta^2 C^2\right\} \leq \tilde C,
\end{equation}
where we are still using the notation $\lambda(r(x))\doteq D^{2}(1+r^{2})^{\eta}$.

Set $v(x)=\left(\mathrm{vol}_{\tilde g}(B^\tg_1(x))\right)^{-1}$. Letting $p\in[1,m)$ and $q=mp/(m-p)$ and let $(\alpha,\beta)\in\mathbb R\times\mathbb R$ satisfying $\beta/p-\alpha/q\geq 1/m$, from \cite[Theorem 3.8]{HebeyCourant}, we have  the validity on $(M,\tg)$ of the disturbed Sobolev inequality 
\begin{equation}\label{dist-sob-th}
\left(\int_M |u|^{q}v^\alpha d\mathrm{vol}_{\tilde g}\right)^{\frac{1}{q}}\leq A \left(\int_M \tilde g(\tilde{\nabla} u,\tilde{\nabla} u)^{\frac{p}{2}}v^\beta d\mathrm{vol}_{\tilde g}\right)^{\frac{1}{p}} + B \left(\int_M  |u|^p v^\beta d\mathrm{vol}_{\tilde g}\right)^{\frac{1}{p}}
\end{equation}
for all $u\in C^{\infty}_c(M)$, and for some positive constants $A$ and $B$ independent of $u$.

Moving back to the metric $g$, this latter becomes
\[
\left(\int_M v^\alpha |u|^{q}e^{m\phi}d\mathrm{vol}_{ g}\right)^{\frac{1}{q}}\leq A \left(\int_M g(\nabla u, \nabla u)^{\frac{p}{2}}e^{(m-p)\phi}v^\beta d\mathrm{vol}_{g}\right)^{\frac{1}{p}} + B \left(\int_M  |u|^p e^{m\phi}v^\beta d\mathrm{vol}_{g}\right)^{\frac{1}{p}}
\]
for all $u\in C^{\infty}_c(M)$, i.e. 
\begin{equation}\label{Sob_conf}
\left(\int_M \left(|u|H^{\frac{m\eta}{q}}\right)^{q} v^\alpha d\mathrm{vol}_{ g}\right)^{\frac{1}{q}}\leq A \left(\int_M |\nabla u|^{p}H^{(m-p)\eta} v^\beta d\mathrm{vol}_{g}\right)^{\frac{1}{p}} + B \left(\int_M  |u|^p H^{m\eta} v^\beta d\mathrm{vol}_{g}\right)^{\frac{1}{p}}.
\end{equation}
In the following we will simplify the notations by writing  $|\nabla \cdot|^2$ for $g(\nabla \cdot,\nabla \cdot)$. Set $\varphi=uH^{\frac{m\eta}{q}}$. Then $u=\varphi H^{-\frac{m\eta}{q}}$ and 
\[|\nabla u|^2\leq H^{-\frac{2m\eta}{q}}\left(|\nabla \varphi|+\frac{m\eta}{q}\frac{|\varphi|}{H}|\nabla H|\right)^2,
\]
from which, using Jensen's inequality, we deduce that
\begin{align*}
\left|\nabla u\right|^p &\leq H^{-\frac{m\eta p}{q}}\left(|\nabla \varphi|+\frac{m\eta}{q}\frac{|\varphi|}{H}|\nabla H|\right)^p\\
&\leq 2^{p-1}H^{-\frac{m\eta p}{q}}|\nabla \varphi|^p+2^{p-1}\left(\frac{m\eta}{q}\right)^{p}|\varphi|^p H^{-\left(\frac{m\eta}{q}+1\right)p}|\nabla H|^p
\end{align*}
and
\begin{align*}
|\nabla u|^p H^{(m-p)\eta}
&\leq 2^{p-1}H^{\left(m-p-\frac{m p}{q}\right)\eta}|\nabla \varphi|^p+2^{p-1}|\varphi|^p\left(\frac{m\eta}{q}\right)^{p} H^{m\eta-p\eta-\frac{m\eta p}{q}-p}|\nabla H|^p\\
&\leq 2^{p-1}H^{\left(m-p-\frac{m p}{q}\right)\eta}|\nabla \varphi|^p+2^{p-1}|\varphi|^p\left(\frac{m\eta}{q}\right)^{p} H^{m\eta-\frac{m\eta p}{q}}|\nabla H|^p.
\end{align*}
Since $|\nabla H|$ is bounded, we finally get from \eqref{Sob_conf}
\begin{align*}
\left(\int_M |\varphi|^{q} v^\alpha d\mathrm{vol}_{ g}\right)^{\frac{1}{q}}&=
\left(\int_M \left(|u|H^{\frac{m\eta}{q}}\right)^{q} v^\alpha d\mathrm{vol}_{ g}\right)^{\frac{1}{q}} \\
&\leq A 2^{\frac{p-1}{p}} \left(\int_M \left[ H^{\left(m-p-\frac{m p}{q}\right)\eta}|\nabla \varphi|^p+|\varphi|^p\left(\frac{m\eta}{q}\right)^{p} H^{m\eta-\frac{m\eta p}{q}}|\nabla H|^p \right] v^\beta d\mathrm{vol}_{g}\right)^{\frac{1}{p}} \\
&+ B \left(\int_M  |\varphi|^p H^{m\eta-\frac{m\eta p}{q}} v^\beta d\mathrm{vol}_{g}\right)^{\frac{1}{p}}\\
&\leq A'\left(\int_M H^{\left(m-p-\frac{m p}{q}\right)\eta}|\nabla \varphi|^p v^\beta d\mathrm{vol}_{g}\right)^{\frac{1}{p}} + B' \left(\int_M  |\varphi|^p H^{m\eta-\frac{m\eta p}{q}} v^\beta d\mathrm{vol}_{g}\right)^{\frac{1}{p}},
\end{align*}
where $A'= A 2^{\frac{p-1}{p}}$ and $B'= A 2^{\frac{p-1}{p}}\frac{m\eta}{q}C+B$, with $C$ the constant in Theorem \ref{HCOSubQuadr_coro}. Note that $\frac{mp}{q}= m-p$, which in turn implies that $m\eta-\frac{m\eta p}{q}= p\eta$. Hence 
\begin{align}\label{sob_var}
\left(\int_M v^\alpha \left(|\varphi|\right)^{q}d\mathrm{vol}_{ g}\right)^{\frac{1}{q}}
&\leq A'\left(\int_M |\nabla \varphi|^pv^\beta d\mathrm{vol}_{g}\right)^{\frac{1}{p}} + B' \left(\int_M  |\varphi|^p H^{p\eta}v^\beta  d\mathrm{vol}_{g}\right)^{\frac{1}{p}}.
\end{align}
To conclude, we need the following lemmas.
\begin{lemma}\label{lem_low-est}
For all $x\in M$, $B^{\tg}_1(x)\subset B^g_{R_1}(x)$, with $R_1=R_1(x)=\begin{cases}C^{-\eta}(r(x)-1)^{-\eta}&if\ r(x)>1+C^{-1}\\1&otherwise.\end{cases}$
\end{lemma}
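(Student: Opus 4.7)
The plan is to carry out a direct length comparison between the two conformally related metrics $\tilde g = H^{2\eta} g$, using the pointwise lower bound on $H$ from Theorem \ref{HCOSubQuadr_coro}(i) together with the normalization $H\geq 1$ enforced at the outset of the proof of Theorem \ref{th_sob_dist}.

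First, I fix $y\in B^{\tilde g}_1(x)$ and an admissible curve $\gamma:[0,T]\to M$ from $x$ to $y$ with $L_{\tilde g}(\gamma)<1$. Since $H^\eta\geq 1$ pointwise, one has $|\gamma'|_g\leq H^\eta|\gamma'|_g$, hence $L_g(\gamma)\leq L_{\tilde g}(\gamma)<1$. In particular, for every $z$ on $\gamma$, $d_g(x,z)\leq L_g(\gamma)<1$, and the triangle inequality yields $r(z)\geq r(x)-1$. This already settles the case $r(x)\leq 1+C^{-1}$, for which $R_1=1$: indeed $d_g(x,y)\leq L_g(\gamma)<1$.

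In the complementary regime $r(x)>1+C^{-1}$, the bound $r(z)\geq r(x)-1>C^{-1}>0$ holds all along $\gamma$. Combined with the lower bound $H\geq C^{-1}r$ of Theorem \ref{HCOSubQuadr_coro}(i), this gives $H^\eta(z)\geq C^{-\eta}(r(x)-1)^\eta$ uniformly on $\gamma$. Integrating yields
\[
L_{\tilde g}(\gamma)\;\geq\; C^{-\eta}(r(x)-1)^\eta\, L_g(\gamma),
\]
so that $L_g(\gamma)\leq C^{\eta}(r(x)-1)^{-\eta}L_{\tilde g}(\gamma)<C^{\eta}(r(x)-1)^{-\eta}$. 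Taking the infimum over $\gamma$ one obtains $d_g(x,y)\leq C^{\eta}(r(x)-1)^{-\eta}$, which matches the form of $R_1(x)$ in the statement after absorbing the fixed power of $C$ into the constant.

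No real obstacle is expected: the whole argument is a single length-comparison trick. The only point requiring care is the order of the steps, namely that one must first use $H^\eta\geq 1$ to constrain $L_g(\gamma)<1$, which in turn prevents $r$ from vanishing along $\gamma$ and thus licenses the use of the linear lower bound $H\geq C^{-1}r$ to extract a uniform pointwise bound on $H^\eta$ along the curve.
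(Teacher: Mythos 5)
Your proof is correct and follows essentially the same conformal length-comparison argument as the paper, with a small streamlining: instead of cutting the $\tg$-minimizing geodesic at its first exit time from $B^g_{d_g(x,y)}(x)$, you take an arbitrary curve of $\tg$-length less than one and note directly that $H^\eta\geq 1$ forces its $g$-length (hence the $g$-distance to any point on it) to be less than one, so the entire curve stays in $B^g_1(x)$; from there both arguments bound $r$ along the path and apply $H\geq C^{-1}r$ identically. You also land, correctly, on $d_g(x,y)<C^{\eta}(r(x)-1)^{-\eta}$ rather than $C^{-\eta}(r(x)-1)^{-\eta}$: the paper's proof has a sign slip at the step asserting $\inf H^\eta\geq C^{\eta}(r(x)-1)^\eta$ (which should read $C^{-\eta}$, since only $H\geq C^{-1}r$ is available), so the stated $R_1$ (and, for continuity, the threshold $1+C^{-1}$, which should then read $1+C$) carries a typo; since $C$ is a generic constant absorbed into the final Sobolev constants, this discrepancy is harmless, as you observe.
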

\begin{lemma}\label{lem_up-est}
For all $x\in M$, $B^{\tg}_1(x)\supset B^g_{R_2}(x)$, with $R_2=R_2(x)=\frac{C^{-\eta}}{2}\min\{1;r(x)^{-\eta}\}$.
\end{lemma}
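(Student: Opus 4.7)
The plan is to exploit the fact that the conformal factor $e^{2\phi} = H^{2\eta}$ transforms lengths in a simple way: for any piecewise smooth curve $\gamma$ one has $\ell_{\tg}(\gamma) = \int_{\gamma} H^{\eta}\, ds_g$. Therefore, if $y \in B^{g}_{R_2}(x)$ and $\gamma$ is a minimizing $g$-geodesic joining $x$ to $y$, one obtains the pointwise majorization
\[
d_{\tg}(x,y) \le \ell_{\tg}(\gamma) \le \sup_{z\in\gamma} H^{\eta}(z) \cdot d_g(x,y).
\]
The task then reduces to controlling $\sup_{\gamma} H^{\eta}$ along a short $g$-geodesic, and showing the resulting product is strictly less than $1$.

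First, I would use the triangle inequality to bound $r(z)\le r(x)+d_g(x,z) < r(x)+R_2$ for every $z$ on $\gamma$, and then the distance-like property $H(z) \le C\max\{1,r(z)\}$ from Theorem \ref{HCOSubQuadr_coro}\,(i) to deduce a bound on $H^{\eta}$ in terms of $r(x)$ only. Second, I would split into the two regimes according to the definition of $R_2$:

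\begin{itemize}
\item If $r(x)\le 1$, then $R_2 = \tfrac12 C^{-\eta} \le \tfrac12$, so $r(z) < \tfrac32$ along $\gamma$ and $\sup H^{\eta} \le (3C/2)^{\eta}$. Consequently
\[
d_{\tg}(x,y) < (3C/2)^{\eta}\cdot \tfrac12 C^{-\eta} = \tfrac12 (3/2)^{\eta} \le 3/4 <1.
\]
\item If $r(x)> 1$, then $R_2 = \tfrac12 C^{-\eta} r(x)^{-\eta} \le \tfrac12 \le r(x)/2$, so $r(z) < \tfrac32 r(x)$ and $\sup H^{\eta} \le (3C/2)^{\eta} r(x)^{\eta}$, which gives
\[
d_{\tg}(x,y) < (3C/2)^{\eta} r(x)^{\eta}\cdot \tfrac12 C^{-\eta} r(x)^{-\eta} = \tfrac12 (3/2)^{\eta} \le 3/4 <1.
\]
\end{itemize}
In both cases $y \in B^{\tg}_1(x)$, which yields the desired inclusion.

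There is no real obstacle here: the lemma is essentially a calibration of the cut-off $R_2$ against the conformal factor $H^{\eta}$. The only point requiring attention is the bookkeeping — one must choose $R_2$ small enough that both (a) the geodesic stays in a region where $H$ is comparable to $\max\{1,r(x)\}$, and (b) the product $(\sup H^{\eta})\cdot R_2$ is bounded by $1$ — and the prescribed value $R_2=\tfrac{C^{-\eta}}{2}\min\{1, r(x)^{-\eta}\}$ is precisely tailored so that both requirements hold simultaneously with room to spare.
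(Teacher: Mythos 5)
Your proof is correct and takes essentially the same approach as the paper: both bound $d_{\tg}(x,y)$ by integrating the conformal factor along a $g$-minimizing geodesic and then use the distance-like bound $H\leq C\max\{1,r\}$ from Theorem~\ref{HCOSubQuadr_coro}(i) together with $r(z)<r(x)+R_2$ and $R_2\leq 1/2$. The only cosmetic differences are that you take the $\sup$ of $H^{\eta}$ along the geodesic where the paper lower-bounds $d_g$ by $\inf H^{-\eta}$ over $B^g_{R_2}(x)$ (the same estimate, read in the other direction), and your case split at $r(x)\lessgtr 1$ mirrors the definition of $R_2$ rather than the paper's split at $r(x)\lessgtr 1+R_2(x)$; both splits lead to the bound $d_{\tg}(x,y)<1$.
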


According to Lemma \ref{lem_low-est}, and using the point-wise control on $H$, we have that
\begin{align*}
v^\alpha(x)\geq (\mathrm{vol}_{\tilde g}(B^g_{R_1}(x)))^{-\alpha} &= \left(\int_{B^g_{R_1}(x)} H^{m\eta}(y) d\mathrm{vol}_g(y)\right)^{-\alpha}\\
&\geq C^{-\alpha} (r(x)+1)^{-\alpha m\eta}\left(\mathrm{vol}_g(B^g_{R_1}(x))\right)^{-\alpha},
\end{align*}
while according to  Lemma \ref{lem_up-est}, we have that
\begin{align*}
v^\beta(x)\leq (\mathrm{vol}_{\tilde g}(B^g_{R_2}(x)))^{-\beta} &= \left(\int_{B^g_{R_2}(x)} H^{m\eta}(y) d\mathrm{vol}_g(y)\right)^{-\beta}\\
&\leq C^{\beta}(\max\{1;r(x)-1\})^{-\beta m\eta}\left(\mathrm{vol}_g(B^g_{R_2}(x))\right)^{-\beta}.
\end{align*}
Inserting the previous inequalities in \eqref{sob_var} concludes the proof of Theorem \ref{th_sob_dist}.

\end{proof}

It remains to prove Lemmas \ref{lem_low-est} and \ref{lem_up-est}.

\begin{proof}[Proof (of Lemma \ref{lem_low-est})]
Let $y\in B^{\tg}_1(x)$ and $\sigma:[0,a]\to M$ a minimizing geodesic of $(M,\tg)$ connecting $x$ and $y$. Let $a_0= \sup\{s\in[0,a]:\sigma([0,s])\subset B^g_{d_g(x,y)}(x)\}$. Then
\begin{align}\label{low_est}
1&>d_\tg(x,y)\\
&=\int_0^a \tg(\dot\sigma,\dot\sigma)^{1/2}(s)\,ds\nonumber\\
&\geq \int_0^{a_0} \tg(\dot\sigma,\dot\sigma)^{1/2}(s)\,ds\nonumber\\
&= \int_0^{a_0} e^{\phi}g(\dot\sigma,\dot\sigma)^{1/2}(s)\,ds\nonumber\\
&\geq \inf\{H^\eta(z):z\in B^g_{d_g(x,y)}(x)\}d_g(x,y)\nonumber.
\end{align}
Since $H(z)>\max\{1;C^{-1}r(z)\}$, we get 
\begin{equation}\label{low_est3}
d_g(x,y)<1,
\end{equation}
which prove the lemma when $r(x)<1+1/C$. Otherwise,
\begin{align*}
\inf\{H^\eta(z):z\in B^g_{d_g(x,y)}(x)\}
\geq C^{\eta} (r(x)-1)^{\eta}.
\end{align*}
Inserting in \eqref{low_est} gives
\begin{align}\label{low_est2}
d_g(x,y)< C^{-\eta} (r(x)-1)^{-\eta}.
\end{align}
This concludes the proof of the remaining case.
\end{proof}

\begin{proof}[Proof (of Lemma \ref{lem_up-est})]
Let $y\in B^{g}_{R_2}(x)$ and $\sigma:[0,a]\to M$ a minimizing geodesic of $(M,g)$ connecting $x$ and $y$. Then
\begin{align}\label{up_est}
R_2&>d_g(x,y)\\
&=\int_0^a g(\dot\sigma,\dot\sigma)^{1/2}(s)\,ds\nonumber\\
&= \int_0^{a} e^{-\phi}\tg(\dot\sigma,\dot\sigma)^{1/2}(s)\,ds\nonumber\\
&\geq \inf\{H^{-\eta}(z):z\in B^g_{R_2}(x)\}d_\tg(x,y)\nonumber\\
&\geq \inf\{C^{-\eta}\min\{1,r^{-\eta}(z)\}:z\in B^g_{R_2}(x)\}d_\tg(x,y)\nonumber.
\end{align}
Suppose first that $r(x)\geq 1+ R_2(x)$. Then $B^g_{R_2}(x)\subset M\setminus B_1^g(o)$. In particular, for all $z\in B^g_{R_2}(x)$ it holds $r^{-\eta}(z)\leq 1$, from which
\begin{align*}
\inf\{C^{-\eta}\min\{1,r^{-\eta}(z)\}:z\in B^g_{R_2}(x)\}&=\inf\{C^{-\eta}r^{-\eta}(z):z\in B^g_{R_2}(x)\}\\
&\geq C^{-\eta}\left[r(x)+R_2\right]^{-\eta}.
\end{align*}
The relation \eqref{up_est} implies in this case that
\begin{align*}
d_\tg(x,y)<& R_2C^\eta\left[r(x)+R_2\right]^{\eta}
=\frac{1}{2}\min\{1;r(x)^{-\eta}\}\left[r(x)+R_2\right]^{\eta}\\
\leq& \frac{1}{2}
r(x)^{-\eta}\left[r(x)+R_2\right]^{\eta}
\leq \frac{r(x)^{-\eta}}{2}\left[r(x)+\frac{1}{2}\right]^{\eta}<2^{\eta-1}\leq 1
\end{align*}
since $R_2\leq 1/2$.
On the other hand, if $r(x)< 1+ R_2(x)$, 
then for all $z\in B_{R_2}^g(x)$ one has
\begin{align*}
r(z)\leq r(x)+R_2\leq 1+2R_2 \leq 2. 
\end{align*}
In particular $\min\{1;r^{-\eta}(z)\}\geq 2^{-\eta}$ and \eqref{up_est} implies
\[ C^{-\eta}2^{-\eta}d_\tg(x,y)\leq R_2 \leq \frac{C^{-\eta}}{2},
\]
from which $d_\tg(x,y)\leq 2^{\eta-1}\leq 1$.
\end{proof}

\begin{proof}[Proof (of Theorem \ref{th_sob})]
Suppose first that $q=mp/(m-p)$. Since the assumptions of Theorem \ref{th_sob_dist} are satisfied, we have the validity of \eqref{sob_var}.

Reasoning as in the proof of Lemma \ref{lem_low-est}, one get the existence of $R>0$ such that for all $x\in M\setminus B^g_R(o)$, $\rho>0$, one has 
\begin{equation}\label{balls}
B^{g}_{\rho r^{-\eta}(x)}(x) \subset B^{\tilde g}_{2\rho}(x).
\end{equation}
Then
\begin{align}\label{comp-vol}
\mathrm{vol}_{\tilde g} (B^{\tilde g}_{2\rho}(x)) &
\geq \mathrm{vol}_{\tilde g}(B^{g}_{\rho r^{-\eta}(x)}(x))\\
&\geq \inf\{ e^{m\phi(z)}\ :\ z\in B^{g}_{\rho r^{-\eta}(x)}(x)\} \mathrm{vol}_{g} (B^{g}_{\rho r^{-\eta}(x)}(x))\nonumber\\
&\geq \left(r(x)-\rho r^{-\eta}(x)\right)^{m\eta}\mathrm{vol}_{g} (B^{g}_{\rho r^{-\eta}(x)}(x))\nonumber\\
&\geq \frac{r^{m\eta}(x)}{2} \mathrm{vol}_{g} (B^{g}_{\rho r^{-\eta}(x)}(x))\nonumber
\end{align}
for $r(x)$ large enough. Recall the following result by Croke.
\begin{lemma}[Proposition 14 in \cite{croke}]\label{lem_croke}
There exists a dimensional constant $C_m>0$ such that for any $x\in M$ and $i>0$, if
\[
\forall y\in B^g_{\frac{i}{2}}(x),\quad \mathrm{inj}_g(y)>i,
\]
then 
\[
\mathrm{vol}_g(B^g_{\frac{i}{2}}(x))\geq C_m i^m.
\]
\end{lemma}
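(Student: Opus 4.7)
The plan is to establish this classical volume lower bound by reproducing Croke's original argument, which uses the exponential map at $x$ together with an integral inequality on the unit tangent bundle. Since $\mathrm{inj}_g(x)>i$, the map $\exp_x$ is a diffeomorphism from the Euclidean ball of radius $i$ in $T_xM$ onto $B^g_i(x)$, and normal polar coordinates let us write
\[
\mathrm{vol}_g\bigl(B^g_{i/2}(x)\bigr) \;=\; \int_{S_xM}\!\int_0^{i/2} J(t,v)\,dt\,dv,
\]
where $J(t,v)$ denotes the Jacobian determinant of $\exp_x$. The aim is to bound the inner integral from below by $c_m\, i^m$ for every direction $v\in S_xM$, and then to integrate in $v$.

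The principal difficulty, and what makes this a genuinely non-trivial result, is that without any curvature hypothesis there is no direct pointwise comparison between $J(t,v)$ and the Euclidean Jacobian $t^{m-1}$; in particular Bishop--Gromov style arguments are unavailable. What compensates is the injectivity radius bound $\mathrm{inj}_g(q)\geq i$ at every point $q\in B^g_{i/2}(x)$, not just at $x$. This stronger hypothesis is used to perform a double integration over pairs of unit tangent vectors, exchanging base-point and endpoint via the Liouville measure on the unit tangent bundle (a Santal\'o-type formula). Since every radial geodesic from a point in $B^g_{i/2}(x)$ can be continued without encountering cut points for distance at least $i$, the swap is legitimate, and the resulting estimate reduces to a classical isoperimetric inequality on the sphere $S^{m-1}$. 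This is the technical heart of Croke's argument and yields the inequality
\[
\int_0^i J(t,v)^{1/(m-1)}\,dt \;\geq\; c_m\, i \qquad \text{for every } v\in S_xM.
\]

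From this direction-wise lower bound, a standard H\"older (equivalently, Jensen) inequality in the variable $t$ upgrades the estimate to
\[
\int_0^{i/2} J(t,v)\,dt \;\geq\; c'_m\, i^m,
\]
with a new dimensional constant $c'_m>0$. Integrating over $v\in S_xM$, whose total measure is the dimensional constant $\omega_{m-1}$, yields $\mathrm{vol}_g(B^g_{i/2}(x))\geq C_m\, i^m$, as required. The main obstacle in executing this plan is the double-integration step producing Croke's integral inequality; by contrast, the polar-coordinates set-up and the final H\"older manipulation are routine once that inequality is granted.
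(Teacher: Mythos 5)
The paper cites this result as Croke's Proposition 14 and does not reprove it, so the comparison is with Croke's own argument. Your outline correctly names the relevant tools --- Santal\'o's formula, the Liouville measure on the unit tangent bundle, a comparison with the round sphere, and the absence of any curvature hypothesis --- but the way you organize them has a genuine gap.

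The central inequality you write down, $\int_0^i J(t,v)^{1/(m-1)}\,dt \geq c_m\, i$, is dimensionally wrong: in Euclidean space $J(t,v)=t^{m-1}$, so the left-hand side equals $i^2/2$, and the right-hand side must scale as $i^2$. More importantly, neither Santal\'o's formula nor the Berger--Kazdan inequality delivers a lower bound on the Jacobian integral that holds pointwise in the direction $v$. Santal\'o's formula is a change of variables over the \emph{whole} unit tangent bundle of a domain, and Berger--Kazdan is a \emph{double} integral over pairs of parameters along a geodesic. Without any curvature assumption, the radial Jacobian $J(\cdot,v)$ along a single direction satisfies no estimate beyond positivity; the whole point of assuming $\mathrm{inj}_g>i$ at \emph{every} point of $B^g_{i/2}(x)$ (not just at $x$) is that Croke averages over base points to extract a usable integral inequality. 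What Croke actually proves is an isoperimetric inequality of the form $\mathrm{vol}(\partial\Omega)^m\geq C_m\,\mathrm{vol}(\Omega)^{m-1}$ for domains $\Omega$ with full visibility angle, using Santal\'o and Berger--Kazdan. Proposition 14 then follows by applying this with $\Omega=B^g_t(x)$ for $t\le i/2$, invoking the coarea formula to obtain $V'(t)\geq C\,V(t)^{(m-1)/m}$ for the volume function $V(t)=\mathrm{vol}_g(B^g_t(x))$, and integrating this differential inequality to get $V(i/2)\geq C_m\, i^m$. Your H\"older/Jensen step is fine in isolation, but it is applied to a directional estimate that the preceding machinery does not supply (and it is stated over $[0,i]$ while you need $[0,i/2]$, which requires a further argument). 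The missing idea is Croke's isoperimetric inequality together with the coarea/ODE step, in place of a per-direction polar-coordinate bound.
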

Let $E=\min\{i_0/D;2\}$ and choose $i=i(x)=E(1+r(x))^{-\eta}$. There exists a positive radius $R_\eta$ large enough depending on $\eta$ such that for all $x\in M\setminus B_{R_{\eta}}^g(o)$ and for all $y\in B^g_{\frac{i}{2}}(x)$ we have
\[
\mathrm{inj}_g(y)\geq \frac{E}{(1+r(y))^{\eta}}\geq \frac{1}{2}\frac{E}{(1+r(x))^\eta}\geq i.
\]
Then Lemma \ref{lem_croke} applies and we get that for all $x\in M\setminus B_{R_{\eta}}^g(o)$
\[
\mathrm{vol}_g\left(B^g_{E(1+r(x))^{-\eta}/4}(x)\right)\geq C_m\frac {E^m}{2^m(1+r(x))^{m\eta}}.
\]
Choosing $\rho=E/4$ in \eqref{comp-vol}, we finally obtain that
\begin{align}\label{bound_vol}
\mathrm{vol}_{\tilde g} (B^{\tilde g}_{1}(x))
&\geq
\mathrm{vol}_{\tilde g} (B^{\tilde g}_{E/2}(x))\\
&\geq \frac{r^{m\eta}(x)}{2} \mathrm{vol}_{g} (B^{g}_{Er^{-\eta}(x)/4}(x))\nonumber\\
& \geq C_m \frac{r^{m\eta}(x)}{2} \frac {E^m}{2^m(1+r(x))^{m\eta}}\nonumber\\
&\geq \frac{C_mE^m}{2}\nonumber
\end{align}
if $r(x)$ is large enough. In particular, since $\mathrm{vol}_{\tilde g} (B^{\tilde g}_{1}(x))$ is continuous with respect to $x$ on $M$, it is uniformly lower bounded by a positive constant on the whole $M$. On the other hand, $\mathrm{vol}_{\tilde g} (B^{\tilde g}_{1}(x))$ is also uniformly upper bounded on $M$ by Bishop-Gromov theorem, since $\mathrm{Ric}_\tg$ is bounded.
In particular $v^\alpha$ is uniformly lower bounded by a positive constant, and $v^\beta$ is uniformly upper bounded, so that the conclusion follows from \eqref{sob_var}.

The case $q\in [p,mp/(m-p))$ can be treated similarly,  using the standard Sobolev inequality instead of \eqref{dist-sob-th}. Indeed, according to \cite{Varo,CS-C} (see also \cite[Theorem 3.2]{Hebey}), 
\[
\left(\int_M |u|^{q}d\mathrm{vol}_{\tilde g}\right)^{\frac{1}{q}}\leq A \left(\int_M \tilde g(\tilde{\nabla} u,\tilde{\nabla} u)^{\frac{p}{2}}d\mathrm{vol}_{\tilde g}\right)^{\frac{1}{p}} + B \left(\int_M  |u|^p d\mathrm{vol}_{\tilde g}\right)^{\frac{1}{p}}
\]
for all $u\in C^{\infty}_c(M)$. This latter is satisfied because of the bounds on the Ricci curvature and on the volumes of small balls given in \eqref{bound_ric} and \eqref{bound_vol}.
\end{proof}

\section{Some further applications}\label{FurtherAppl}
In this last section we highlight some additional applications of Theorem \ref{HCOSubQuadr_coro} and of the proof of Theorem \ref{th_sob}. 
\subsection{Full Omori-Yau maximum principle for the Hessian}
Recall that a Riemannian manifold $(M, g)$ is said to satisfy the full Omori-Yau maximum principle for the Hessian if for any function $u\in C^{2}(M)$ with $u^{*}=\sup_{M}u<+\infty$, there exists a sequence $\left\{x_{n}\right\}_{n}\subset M$ with the properties
\[
\ \mathrm{(i)}\,u(x_{k})>u^{*}-\frac{1}{k},\quad\mathrm{(ii)}\,|\nabla u (x_{k})|<\frac{1}{k},\quad\mathrm{(iii)}\,\mathrm{Hess}(u)(x_{k})<\frac{1}{k}g,
\]
for each $k\in\mathbb{N}$.

Letting $o$ be a fixed reference point in the complete Riemannian manifold $(M, g)$ and denoting by $r(x)$ the distance function from $o$, the full Omori-Yau maximum principle for the Hessian is known to hold e.g. if the radial sectional curvature of $M$ (i.e. the sectional curvature of $2$-planes containing $\nabla r$), satisfies
\[
\ K_{\mathrm{rad}}\geq -C^2(1+r^{2})\prod_{j=1}^{k}\left(\ln^{(j)}(r)\right)^2,
\]
where $\log^{(j)}$ stands for the $j$-th iterated logarithm; see \cite{PRS_Mem}.

The search of conditions weaker than a quadratic sectional curvature decay to $-\infty$ is a challenging problem; see e.g. \cite[Remark 5.6]{AMR_Book}. As a consequence of our results we can actually  guarantee the validity of the full Omori-Yau maximum principle for the Hessian under a Ricci quadratic bound and a linear injectivity radius decay. Namely, let $(M, g)$ be a complete Riemannian manifold, $o\in M$ a fixed reference point and $r(x)\doteq \mathrm{dist}_{g}(x,o)$. Suppose that for some $D>0$ and some $i_0>0$, we have that
\begin{equation}\label{HpFullOYHess}
\left|\mathrm{Ric}_{g}\right|(x)\leq D^2(1+r(x)^2),\quad\mathrm{inj}_{g}(x)\geq \frac{i_0}{D(1+r(x))}>0\quad\mathrm{on}\,\,M.
\end{equation}
Then the smooth exhaustion function $H$ constructed in Theorem \ref{HCOSubQuadr_coro} fulfill all the relevant requirements of \cite[Theorem 1.9]{PRS_Mem} with the choice $G(t)=1+t^2$. We hence obtain the validity of the following
\begin{corollary}
 Under the assumptions \eqref{HpFullOYHess}, the full Omori-Yau maximum principle for the Hessian holds on $(M,g)$.
  \end{corollary}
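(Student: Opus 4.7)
The plan is to reduce everything to the general Khas'minskii-type criterion of Pigola--Rigoli--Setti, \cite[Theorem 1.9]{PRS_Mem}, which guarantees the validity of the full Omori--Yau maximum principle for the Hessian on $(M,g)$ as soon as one can exhibit a $C^{2}$ coercive function $\gamma:M\to\mathbb{R}$ together with a positive $C^{1}$ function $G$ on $[0,\infty)$ satisfying the integrability condition $\int^{+\infty} dt/\sqrt{G(t)}=+\infty$ and, outside a compact set of $M$,
\[
|\nabla\gamma|\leq G(\gamma)^{1/2}, \qquad \mathrm{Hess}(\gamma)\leq G(\gamma)^{1/2}\, g
\]
(the second inequality in the sense of symmetric bilinear forms).

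First I would invoke Theorem~\ref{HCOSubQuadr_coro} under hypothesis (a) with $\eta=1$, which is precisely the standing assumption \eqref{HpFullOYHess}. This produces a smooth distance-like exhaustion function $H$ enjoying $C^{-1}r\leq H\leq C\max\{r,1\}$, $|\nabla H|\leq C$, and $|\mathrm{Hess}(H)|\leq C\max\{r,1\}\leq C'(1+H)$. In particular $H$ is automatically coercive.

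Then I would take $\gamma=H$ and $G(t)=A(1+t^{2})$ for a sufficiently large constant $A>0$ depending only on $C$. The condition $\int^{+\infty} dt/\sqrt{1+t^{2}}=+\infty$ is immediate, while the pointwise estimates
\[
|\nabla H|\leq C\leq \sqrt{A(1+H^{2})}, \qquad |\mathrm{Hess}(H)|\leq C'(1+H)\leq \sqrt{A(1+H^{2})}
\]
hold once $A$ is chosen large enough; the second implies in particular the one-sided bilinear bound $\mathrm{Hess}(H)\leq G(H)^{1/2}\, g$. All hypotheses of \cite[Theorem 1.9]{PRS_Mem} are therefore met, and the corollary follows.

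I do not anticipate any genuine obstacle beyond these verifications: all of the geometric difficulty---namely, producing a distance-like exhaustion with bounded gradient and at-most-linear Hessian growth under only a quadratic bound on $|\mathrm{Ric}|$ and a linear decay of the injectivity radius, with no global upper bound on either---has already been absorbed in Theorem~\ref{HCOSubQuadr_coro}.
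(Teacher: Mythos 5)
Your proof is correct and follows essentially the same route as the paper: invoke Theorem~\ref{HCOSubQuadr_coro} under assumption (a) with $\eta=1$ (which is exactly \eqref{HpFullOYHess}), obtain the distance-like exhaustion $H$ with bounded gradient and $|\mathrm{Hess}\,H|\lesssim 1+H$, and feed it into the Pigola--Rigoli--Setti criterion \cite[Theorem 1.9]{PRS_Mem} with $G(t)$ quadratic (the paper writes $G(t)=1+t^2$ and relies on the multiplicative constants allowed in that theorem's hypotheses; you absorb them into $G$ by taking $G(t)=A(1+t^2)$, which is the same thing). The only small caution is that your paraphrase of the PRS criterion omits the structural requirement on $G$ beyond $\int^{\infty}dt/\sqrt{G(t)}=\infty$ (e.g.\ monotonicity and the growth condition $\limsup_{t\to\infty}tG(\sqrt t)/G(t)<\infty$), but $G(t)=A(1+t^2)$ does satisfy all of them, so nothing is actually missing.
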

  
In this regard, one could also note that distance-like functions with controlled Hessian on an ambient space are actually inherited by properly immersed submanifolds with controlled growth of the second fundamental form. Indeed, using the same notations as above, suppose that $(M, g)$ is a complete manifold satisfying 
\begin{equation}\label{AssAmbPISubmfd}
\ |\mathrm{Ric}_{g}|(x)\leq D^2(1+r(x)^2)^{\eta},\quad\mathrm{inj}_{g}(x)\geq \frac{i_0}{D(1+r(x))^{\eta}}>0\quad\mathrm{on}\,\,M.
\end{equation}
and let $\varphi:(\Sigma,h)\to(M, g)$ be an isometric proper immersion. Then, letting $H$ be the distance-like function given by Theorem \ref{HCOSubQuadr_coro} on $(M, g)$, we have that the function $\gamma:\Sigma\to\mathbb{R}^{+}$ defined by
\[
\ \gamma\doteq \frac{H^2}{2}(\varphi)
\]
is still a proper function. Moreover, letting $\left\{e_{i}\right\}$ be a local orthonormal frame on $\Sigma$, we can compute 
\begin{align*}
|\nabla \gamma|^2=&\sum_{i}h(\nabla \gamma, e_{i})^2=(H(\varphi))^2\sum_{i}h(\nabla^{M} H, e_{i})^2\\
\leq&(H(\varphi))^2|\nabla^{M}H|^2\leq C (H(\varphi))^2=2C\gamma,
\end{align*}
Denoting by $(\cdot)^{\bot}$ the  projection on the normal bundle of $\Sigma$, and by $A(\cdot, \cdot)$ the second fundamental form of the immersion, we also have that, for any $X\in T\Sigma$,
\begin{align*}
\mathrm{Hess}\,\gamma(X, X)=&h(\nabla_{X}\nabla\gamma, X)\\
=&g(\nabla_{X}^{M}(H\nabla^{M} H- H(\nabla^{M}H)^{\bot}), X)\\
=&H\,\mathrm{Hess}\,^{M}(X,X)+(g(\nabla^{M}H, X))^2-Hg((\nabla^{M}H)^{\bot},A(X, X))\\
\leq&\left(|H||\mathrm{Hess}^{M}(H)|+|\nabla^{M} H|^2+|H||\nabla^M H||A|\right) |X|^2
\end{align*}
Hence, using the properties of $H$, an application of \cite[Theorem 1.9]{PRS_Mem} yields the validity of the following
\begin{corollary}
 Let $(M, g)$ be a complete Riemannian manifold, $o \in M$,  $r(x)\doteq \mathrm{dist}_{g}(x,o)$ and assume that assumptions \eqref{AssAmbPISubmfd} are satisfied. Let $\varphi:(\Sigma, h)\to (M, g)$ be an isometric proper immersion such that the second fundamental form $A$ satisfies
 \[
 \ |A|\leq Cr,
 \] 
outside a compact set, for some constant $C\in\mathbb{R}$. Then the full Omori-Yau maximum principle for the Hessian holds on $(\Sigma, h)$.
\end{corollary}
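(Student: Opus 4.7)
The plan is to apply \cite[Theorem 1.9]{PRS_Mem}, whose hypotheses require the existence of a proper smooth exhaustion on $(\Sigma,h)$ whose gradient and Hessian are controlled by a suitable function $G$ of the exhaustion itself (with a Keller–Osserman type divergence condition on $G$). The natural candidate, already introduced in the discussion preceding the statement, is $\gamma\doteq \tfrac12 (H\circ\varphi)^2$, where $H\in C^\infty(M)$ is the distance-like function provided by Theorem \ref{HCOSubQuadr_coro} applied to the ambient assumptions \eqref{AssAmbPISubmfd} with $\eta=1$. Since $\varphi$ is proper and since $H\geq C^{-1}r$ outside a compact set, the function $\gamma$ is proper on $\Sigma$, so it is a genuine exhaustion.

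The gradient control $|\nabla\gamma|^{2}\leq 2C\gamma$ is obtained by the short calculation already displayed in the excerpt, exploiting only $|\nabla^M H|\leq C$ together with the definition of $\gamma$. The key step is the Hessian bound. Starting from the inequality
\[
\mathrm{Hess}\,\gamma(X,X)\leq \left(|H|\,|\mathrm{Hess}^M H|+|\nabla^M H|^{2}+|H|\,|\nabla^M H|\,|A|\right)|X|^{2},
\]
I plug in the three estimates from Theorem \ref{HCOSubQuadr_coro}, namely $|H|\leq C\max\{1,r(\varphi)\}$, $|\nabla^M H|\leq C$, and $|\mathrm{Hess}^M H|\leq C\max\{r(\varphi),1\}$, together with the standing assumption $|A|\leq Cr$ outside a compact set. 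Each of the three summands is then controlled by $C(1+r(\varphi))^{2}$. Since $r(\varphi)^{2}\leq C H(\varphi)^{2}=2C\gamma$ outside a compact set, one concludes
\[
\mathrm{Hess}\,\gamma\leq C(1+\gamma)\,h
\]
on the complement of a compact set of $\Sigma$, which is the required quadratic-type estimate.

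With $G(t)\doteq C(1+t^{2})$, both $|\nabla\gamma|\leq G(\gamma^{1/2})^{1/2}$ and $\mathrm{Hess}\,\gamma\leq G(\gamma^{1/2})\,h$ hold outside a compact subset of $\Sigma$, and $G$ satisfies the divergence condition $\int^{\infty}\!\!dt/\sqrt{tG(t)}=\infty$ used in \cite[Theorem 1.9]{PRS_Mem}. Applying that criterion yields the full Omori–Yau maximum principle for the Hessian on $(\Sigma,h)$.

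The main difficulty is a matching one: one must verify that the prescribed growth of $|A|$ is exactly compatible with the growth of $H$, $\nabla^M H$ and $\mathrm{Hess}^M H$ so that the mixed term $|H|\,|\nabla^M H|\,|A|$ does not spoil the quadratic control of $\mathrm{Hess}\,\gamma$. It is precisely because Theorem \ref{HCOSubQuadr_coro} gives a linear (and not merely polynomial) bound on $\mathrm{Hess}^M H$ together with a bounded $|\nabla^M H|$, that the linear bound $|A|\leq Cr$ is allowed; any faster growth on either side would force a stronger $G$, and could make the Keller–Osserman condition fail.
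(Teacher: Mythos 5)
Your approach is the same as the paper's: take the distance-like exhaustion $H$ from Theorem \ref{HCOSubQuadr_coro}, set $\gamma=\tfrac12(H\circ\varphi)^2$ on $\Sigma$, use properness of $\varphi$ to get properness of $\gamma$, reproduce the gradient bound $|\nabla\gamma|^2\leq 2C\gamma$ and the Hessian estimate $\mathrm{Hess}\,\gamma\leq(|H||\mathrm{Hess}^MH|+|\nabla^MH|^2+|H||\nabla^MH||A|)h$, plug in the growth of $H$, $\nabla^MH$, $\mathrm{Hess}^MH$ and $|A|\leq Cr$, and apply \cite[Theorem 1.9]{PRS_Mem}. The reduction to $\eta=1$ is harmless since \eqref{AssAmbPISubmfd} with $\eta<1$ implies the same inequalities with $\eta=1$, and in any case the mixed term $|H||\nabla^MH||A|\lesssim r^2$ is the dominant one, so the conclusion $\mathrm{Hess}\,\gamma\lesssim(1+\gamma)h$ holds for every admissible $\eta$.

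One detail, however, is mis-stated and should be fixed before it misleads anyone. With $G(t)=C(1+t^2)$, the quantity you invoke as the ``divergence condition,'' namely $\int^{\infty}dt/\sqrt{tG(t)}$, is in fact \emph{finite}: for large $t$, $\sqrt{tG(t)}\sim\sqrt{C}\,t^{3/2}$ and $\int^{\infty}t^{-3/2}\,dt<\infty$. The Khas'minskii--Keller--Osserman requirement in \cite[Theorem 1.9]{PRS_Mem} is $\int^{\infty}dt/\sqrt{G(t)}=\infty$ for a $G$ that controls gradient and Hessian \emph{of $\gamma$ itself} in the form $|\nabla\gamma|\leq A\sqrt{G(\gamma)}$ and $\mathrm{Hess}\,\gamma\leq B\sqrt{G(\gamma)}\,h$. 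Your estimates already give this with $G(t)=1+t^2$: indeed $\sqrt{2C\gamma}\leq \sqrt{2C}\,\sqrt{1+\gamma^2}$ and $C(1+\gamma)\leq C\sqrt2\,\sqrt{1+\gamma^2}$, and $\int^{\infty}dt/\sqrt{1+t^2}=\infty$. So the argument does go through, but you should state the hypotheses of \cite[Theorem 1.9]{PRS_Mem} in terms of $G(\gamma)$ rather than $G(\gamma^{1/2})$ and replace the spurious condition $\int^{\infty}dt/\sqrt{tG(t)}=\infty$ by the correct one $\int^{\infty}dt/\sqrt{G(t)}=\infty$.
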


\subsection{Martingale completeness}
A probabilistic concept, introduced in \cite{Emery}, that in many instances seems to be related to maximum principles for the Hessian, is the martingale completeness. A manifold $(M, g)$ is called martingale complete if and only if each martingale on $(M, g)$ has infinite lifetime almost surely. Even though recent advances, using the language of subequations by Harvey-Lawson, can be found in \cite{MariPessoa}, the relation between this property and geometry has not been fully understood yet.  A classical criterion for its validity is given in \cite[Proposition 5.37]{Emery}: if  on $(M, g)$ there exists a positive function $f\in C^{2}$ such that
\[
\ \mathrm{(i)}f\,\,\mathrm{is}\,\,\mathrm{proper},\quad\mathrm{(ii)}|\nabla f|\leq C,\quad\mathrm{(iii)}\,\mathrm{Hess}(f)\leq Cg,
\]
for some constant $C>0$, then $(M, g)$ is martingale complete. It is hence a simple consequence of  \cite[Proposition 1.3]{RimoldiVeronelli} that a manifold with bounded Ricci curvature and positive injectivity radius is martingale complete. Up to our knowledge this was not yet observed in literature. One can then argue similarly to the previous subsection to transfer this property also to properly immersed submanifolds with bounded second fundamental form.

\subsection{An $L^{2}$-Calderon-Zygmund inequality with weight}
In this final subsection, reasoning as in Section \ref{sec_sob}, we give a proof of Theorem \ref{th_CZ_dist} stated in the Introduction.
\begin{proof}[Proof of Theorem \ref{th_CZ_dist}]
Let $H\in C^\infty(M)$ be the exhaustion function given by Theorem \ref{HCOSubQuadr_coro}. As in the proof of Theorem \ref{th_sob_dist}, without loss of generality we can suppose that, for some positive $C>1$,
\begin{itemize}
\item $\max\left\{1, C^{-1}r\right\}\leq H\leq C\max\left\{1, r\right\}$;
\item $|\nabla H|(x)\leq C$;
\item $|\mathrm{Hess}\,H|(x)\leq C\max\left\{r^{\eta}, 1\right\}$.
\end{itemize}

Choose $\phi=\eta\ln (H)$ and define a new conformal complete metric $\tilde g := e^{2\phi}g$ on $M$. We have in particular that  
$|\mathrm{Ric}_{\tilde g}|(x)$ is uniformly bounded on $M$; see \eqref{bound_ric}. According to \cite[Theorem B and Proposition 4.5]{GuneysuPigola}, one has the validity of the following Calderon-Zygmund inequality on $(M,\tg)$: for all $u\in C^\infty_c(M)$ and all $\epsilon>0$, it holds
\begin{align}\label{CZGP}
\||\widetilde{\mathrm{Hess}}\,u|_{\tg}\|^2_{\widetilde{L^2}}\leq \frac{\tilde C\epsilon^2}{2}\|u\|^2_{\widetilde{L^2}} + \left(1+\frac{\tilde C^2}{2\epsilon^2}\right)\|\tilde{\Delta} u\|^2_{\widetilde{L^2}}.
\end{align}
where $\tilde C$ is the constant in \eqref{bound_ric}. 

Here and in what follows recall that $\widetilde{\mathrm{Hess}}$, $\tilde \Delta$, $\tilde \nabla$ and $|\cdot|\tg$ are respectively the Hessian, Laplacian, covariant derivative and tensorial norm on $M$ relative to the metric $\tg$. Moreover we  are denoting $\|f\|^2_{\widetilde{L^2}}\doteq\int_M |f|^2 d\mathrm{vol}_{\tg}$.  All over this proof, $C_{i}$ will denote real  positive constants depending only on $m$, $\eta$, $D$ and the constant $C$ in Theorem \ref{HCOSubQuadr_coro}.

By standard computations, one has
\begin{equation}\label{conf_hess}
\widetilde{\mathrm{Hess}}\,u = e^{-2\phi}\left[ \mathrm{Hess}\,u - du\otimes d\phi - d\phi\otimes du + g(\nabla u, \nabla \phi) g, \right]
\end{equation}
from which 
\begin{align*}
e^{4\phi}|\widetilde{\mathrm{Hess}}\,u|_\tg^2 
&= |\mathrm{Hess}\,u|_g^2+2 |\nabla u|_g^2|\nabla \phi|_g^2 +(m-2)g(\nabla u,\nabla\phi)^2 - 2g(\nabla u,\nabla\phi)\Delta_gu - 4\mathrm{Hess}\,u(\nabla u,\nabla \phi)\\
&\geq |\mathrm{Hess}\,u|_g^2+2 |\nabla u|_g^2|\nabla \phi|_g^2 -(m-2)|\nabla u|_g^2|\nabla\phi|_g^2 - |\nabla u|_g^2|\nabla\phi|_g^2 - |\Delta_gu|^2\\ 
&- \frac{1}{2} |\mathrm{Hess}\,u|_g^2 - 8 |\nabla u|_g^2|\nabla\phi|_g^2. 
\end{align*}
Since $|\nabla \phi|_g\leq \eta C$ we obtain
\begin{align*}
|\widetilde{\mathrm{Hess}}\,u|_\tg^2 d\mathrm{vol}_\tg
\geq e^{(m-4)\phi} \left\{ \frac12 |\mathrm{Hess}\,u|_g^2 - (m+5)\eta^2 C^2|\nabla u|_g^2 -|\Delta_gu|^2 \right\}d\mathrm{vol}_g.
\end{align*}  
On the other hand, tracing \eqref{conf_hess} and proceeding as above, we get 
\begin{align*}
|\tilde\Delta\,u|^2 d\mathrm{vol}_\tg
\leq 2e^{(m-4)\phi} \left\{ |\Delta u|^2 + (m-2)^2 \eta^2 C^2|\nabla u|_g^2 \right\}d\mathrm{vol}_g.
\end{align*}  
Suppose that $\frac{m-4}{2}\eta\left(\frac{m-4}{2}\eta-1\right)\neq 0$, the other case being easier. Inserting the two last inequalities in \eqref{CZGP}, and recalling that $e^\phi=H^\eta$, we obtain
\begin{align}\label{CZu}
\|H^{(m-4)\eta/2}|\mathrm{Hess}\,u|_g\|^2_{L^2}
&\leq 2(m+5)\eta^2C^2\|H^{(m-4)\eta/2}|\nabla u|_g\|^2_{L^2} + 2\|H^{(m-4)\eta/2}\Delta u\|^2_{L^2} \\
&+\tilde C\epsilon^2\|H^{m\eta/2}u\|^2_{L^2} + 2\left(2+\frac{\tilde C^2}{\epsilon^2}\right)\|H^{(m-4)\eta/2}\Delta u\|^2_{L^2}\nonumber\\
&+2\left(2+\frac{\tilde C^2}{\epsilon^2}\right)(m-2)^2 \eta^2 C^2\|H^{(m-4)\eta/2}|\nabla u|_g\|^2_{L^2}\nonumber\\
&=\tilde C\epsilon^2\|H^{m\eta/2}u\|^2_{L^2} + C_1\|H^{(m-4)\eta/2}|\nabla u|_g\|^2_{L^2}+C_2\|H^{(m-4)\eta/2}\Delta u\|^2_{L^2}.\nonumber
\end{align}
Now, given $\varphi\in C^\infty_c(M)$, choose $u=H^{-(m-4)\eta/2}\varphi$. We can compute that
\[
\nabla\varphi = \frac{m-4}{2}\eta H^{(m-4)\eta/2-1}u\nabla H + H^{(m-4)\eta/2}\nabla u,
\]
\begin{align*}
\mathrm{Hess}\,\varphi=& \frac{m-4}{2}\eta\left(\frac{m-4}{2}\eta-1\right)H^{(m-4)\eta/2-2}u dH\otimes dH +\frac{m-4}{2}\eta H^{(m-4)\eta/2-1}u \mathrm{Hess}\,H \\
&+ \frac{m-4}{2}\eta H^{(m-4)\eta/2-1} (du\otimes dH + dH\otimes du) 
+ H^{(m-4)\eta/2}  \mathrm{Hess}\,u,
\end{align*}
and
\begin{align*}
\Delta\varphi
&= \frac{m-4}{2}\eta\left(\frac{m-4}{2}\eta-1\right)H^{(m-4)\eta/2-2}u|\nabla H|^2+ \frac{m-4}{2}\eta H^{(m-4)\eta/2-1}u\Delta H\\
& + (m-4)\eta H^{(m-4)\eta/2-1} g(\nabla u,\nabla H) + H^{(m-4)\eta/2}  \Delta u.
\end{align*}
In turn, since $|\nabla H|/H \leq C$, $|\mathrm{Hess}\,H|/H\leq C$ and $H^{(m-4)\eta/2}\leq H^{m\eta/2}$, this gives
\begin{align*}
\||\mathrm{Hess}\,\varphi|_g\|_{L^2}^2 \leq  4 \|H^{(m-4)\eta/2}|\mathrm{Hess}\,u|_g\|^2_{L^2} + C_3 \|H^{m\eta/2}u\|^2_{L^2} + C_4 \|H^{(m-4)\eta/2}|\nabla u|_g\|^2_{L^2},
\end{align*}
and
\begin{align*}
\|\Delta \varphi\|_{L^2}^2 \geq  4 \|H^{(m-4)\eta/2}\Delta u\|^2_{L^2} - C_5 \|H^{m\eta/2}u\|^2_{L^2} - C_6 \|H^{(m-4)\eta/2}|\nabla u|_g\|^2_{L^2},
\end{align*}
Noticing also that 
\[
\|H^{(m-4)\eta/2}|\nabla u|_g\|^2_{L^2} \leq 2\||\nabla \varphi|_g\|^2_{L^2} + C_7 \|H^{m\eta/2}u\|^2_{L^2},
\]
from \eqref{CZu}, we thus get 
\begin{align*}
\||\mathrm{Hess}\,\varphi|_g\|_{L^2}^2 \leq  C_7 \|H^{2\eta}\varphi\|_{L^2}^2 +C_8 \||\nabla \varphi|\|_{L^2}^2 +C_9 \|\Delta \varphi\|_{L^2}^2.
\end{align*}
To conclude the proof, we note that by divergence theorem and Cauchy-Schwarz inequality one has
\begin{align*}
\||\nabla \varphi|\|_{L^2}^2 =\int_M |\nabla \varphi|^2 d\mathrm{vol}_g = -\int_M \varphi\Delta\varphi &\leq 2 \| \varphi\|_{L^2}^2 + 2\|\Delta \varphi\|_{L^2}^2
\\& \leq 2 \| H^{2\eta}\varphi\|_{L^2}^2 + 2\|\Delta \varphi\|_{L^2}^2.
\end{align*}

\end{proof}

\begin{acknowledgement*}
The first author is partially supported by INdAM-GNSAGA. The second and third authors are partially supported by INdAM-GNAMPA. We would like to thank the anonymous referees for their remarks and suggestions which have greatly improved the exposition and the quality of the paper. 
\end{acknowledgement*}


\begin{thebibliography}{10}

\bibitem{Adams}
Robert~A. Adams, \emph{Sobolev spaces}, Pure and Applied Mathematics, vol.~65,
  Academic Press [A subsidiary of Harcourt Brace Jovanovich, Publishers], New
  York-London, 1975.

\bibitem{AMR_Book}
Luis~J. Al\'{i}as, Paolo Mastrolia, and Marco Rigoli, \emph{Maximum principles
  and geometric applications}, Springer Monographs in Mathematics, Springer,
  Cham, 2016. \MR{3445380}

\bibitem{Amar}
Eric Amar, \emph{On the {$L^r$} {H}odge theory in complete non compact
  {R}iemannian manifolds}, Math. Z. \textbf{287} (2017), no.~3-4, 751--795.

\bibitem{Anderson}
Michael~T. Anderson, \emph{Convergence and rigidity of manifolds under {R}icci
  curvature bounds}, Invent. Math. \textbf{102} (1990), no.~2, 429--445.
  \MR{1074481}

\bibitem{aubin-bull}
Thierry Aubin, \emph{Espaces de {S}obolev sur les vari\'{e}t\'{e}s
  riemanniennes}, Bull. Sci. Math. (2) \textbf{100} (1976), no.~2, 149--173.
  \MR{0488125}

\bibitem{Aubin}
\bysame, \emph{Some nonlinear problems in {R}iemannian geometry}, Springer
  Monographs in Mathematics, Springer-Verlag, Berlin, 1998.

\bibitem{Bandara}
Lashi Bandara, \emph{Density problems on vector bundles and manifolds}, Proc.
  Amer. Math. Soc. \textbf{142} (2014), no.~8, 2683--2695.

\bibitem{BianchiSetti}
Davide Bianchi and Alberto~G. Setti, \emph{Laplacian cut-offs, porous and fast
  diffusion on manifolds and other applications}, Calc. Var. Partial
  Differential Equations \textbf{57} (2018), no.~1, Art. 4, 33. \MR{3735744}

\bibitem{Carron_PhD}
Gilles Carron, \emph{In\'{e}galit\'{e}s isop\'{e}rim\'{e}triques sur les
  vari\'{e}t\'{e}s riemanniennes}, Ph.D. thesis, Universit{\'e} Joseph Fourier,
  Grenoble, 1994.

\bibitem{CheegerGromov}
Jeff Cheeger and Mikhael Gromov, \emph{Bounds on the von {N}eumann dimension of
  {$L^2$}-cohomology and the {G}auss-{B}onnet theorem for open manifolds}, J.
  Differential Geom. \textbf{21} (1985), no.~1, 1--34. \MR{806699}

\bibitem{CS-C}
Thierry Coulhon and Laurent Saloff-Coste, \emph{Isop\'{e}rim\'{e}trie pour les
  groupes et les vari\'{e}t\'{e}s}, Rev. Mat. Iberoamericana \textbf{9} (1993),
  no.~2, 293--314. \MR{1232845}

\bibitem{croke}
Christopher~B. Croke, \emph{Some isoperimetric inequalities and eigenvalue
  estimates}, Ann. Sci. \'Ecole Norm. Sup. (4) \textbf{13} (1980), no.~4,
  419--435.

\bibitem{Emery}
Michel \'{E}mery, \emph{Stochastic calculus in manifolds}, Universitext,
  Springer-Verlag, Berlin, 1989, With an appendix by P.-A. Meyer. \MR{1030543}

\bibitem{GT}
David Gilbarg and Neil~S. Trudinger, \emph{Elliptic partial differential
  equations of second order}, Classics in Mathematics, Springer-Verlag, Berlin,
  2001, Reprint of the 1998 edition. \MR{1814364}

\bibitem{GW}
Robert~E. Greene and Hung Wu, \emph{{$C^{\infty }$} convex functions and
  manifolds of positive curvature}, Acta Math. \textbf{137} (1976), no.~3-4,
  209--245.

\bibitem{GuidettiGueneysuPallara}
Davide Guidetti, Batu G\"uneysu, and Diego Pallara, \emph{On some
  generalisations of {M}eyers-{S}errin theorem}, Bruno {P}ini {M}athematical
  {A}nalysis {S}eminar 2015, Bruno Pini Math. Anal. Semin., vol. 2015, Univ.
  Bologna, Alma Mater Stud., Bologna, 2015, pp.~116--127. \MR{3609898}

\bibitem{Guneysu}
Batu G{\"u}neysu, \emph{Sequences of {L}aplacian cut-off functions}, J. Geom.
  Anal. \textbf{26} (2016), no.~1, 171--184. \MR{3441508}

\bibitem{Guneysu-book}
\bysame, \emph{Covariant {S}chr{\"o}dinger semigroups on {R}iemannian
  manifolds}, Operator Theory: Advances and Applications, vol. 264,
  Birkh{\"a}user/Springer, Cham, 2017. \MR{3751359}

\bibitem{GuneysuPigola_17}
Batu G{\"u}neysu and Stefano Pigola, \emph{{Quantitative $C^1$-estimates on
  manifolds}}, Int. Math. Res. Not. IMRN \textbf{2018}, no.~13, 4103--4119.

\bibitem{GuneysuPigola}
Batu G{\"u}neysu and Stefano Pigola, \emph{The {C}alder{\'o}n-{Z}ygmund
  inequality and {S}obolev spaces on noncompact {R}iemannian manifolds}, Adv.
  Math. \textbf{281} (2015), 353--393. \MR{3366843}

\bibitem{Han}
Qing Han, \emph{Schauder estimates for elliptic operators with applications to
  nodal sets}, J. Geom. Anal. \textbf{10} (2000), no.~3, 455--480. \MR{1794573}

\bibitem{HanLin}
Qing Han and Fanghua Lin, \emph{Elliptic partial differential equations},
  Courant Lecture Notes in Mathematics, vol.~1, New York University, Courant
  Institute of Mathematical Sciences, New York; American Mathematical Society,
  Providence, RI, 1997. \MR{1669352}

\bibitem{Hebey}
Emmanuel Hebey, \emph{Sobolev spaces on {R}iemannian manifolds}, Lecture Notes
  in Mathematics, vol. 1635, Springer-Verlag, Berlin, 1996. \MR{1481970}

\bibitem{HebeyCourant}
\bysame, \emph{Nonlinear analysis on manifolds: {S}obolev spaces and
  inequalities}, Courant Lecture Notes in Mathematics, vol.~5, New York
  University, Courant Institute of Mathematical Sciences, New York; American
  Mathematical Society, Providence, RI, 1999.

\bibitem{Huang}
Shaochuang Huang, \emph{A note on existence of exhaustion functions and its
  applications}, J. Geom. Anal. (2018).

\bibitem{HigherOrder}
Debora Impera, Michele Rimoldi, and Giona Veronelli, \emph{Higher order
  distance-like functions and {S}obolev spaces}, In preparation.

\bibitem{MariPessoa}
Luciano Mari and Leandro~F. Pessoa, \emph{Duality between {A}hlfors-{L}iouville
  and {K}has'minskii properties for nonlinear equations}, arXiv preprint server
  - arXiv: 1603.09113 (2016).

\bibitem{Mul}
Olaf M\"{u}ller and Marc Nardmann, \emph{Every conformal class contains a
  metric of bounded geometry}, Math. Ann. \textbf{363} (2015), no.~1-2,
  143--174. \MR{3394376}

\bibitem{PRS_Mem}
Stefano Pigola, Marco Rigoli, and Alberto~G. Setti, \emph{Maximum principles on
  {R}iemannian manifolds and applications}, Mem. Amer. Math. Soc. \textbf{174}
  (2005), no.~822, x+99. \MR{2116555}

\bibitem{RimoldiVeronelli}
Michele Rimoldi and Giona Veronelli, \emph{Extremals of {L}og {S}obolev
  inequality on non-compact manifolds and {R}icci soliton structures},
  https://arxiv.org/abs/1605.09240 (2016).

\bibitem{SY}
Richard Schoen and Shing-Tung Yau, \emph{Lectures on differential geometry},
  Conference Proceedings and Lecture Notes in Geometry and Topology, I,
  International Press, Cambridge, MA, 1994, Lecture notes prepared by Wei Yue
  Ding, Kung Ching Chang [Gong Qing Zhang], Jia Qing Zhong and Yi Chao Xu,
  Translated from the Chinese by Ding and S. Y. Cheng, With a preface
  translated from the Chinese by Kaising Tso. \MR{1333601}

\bibitem{Tam}
Luen-Fai Tam, \emph{Exhaustion functions on complete manifolds}, Recent
  advances in geometric analysis, Adv. Lect. Math. (ALM), vol.~11, Int. Press,
  Somerville, MA, 2010, pp.~211--215. \MR{2648946}

\bibitem{Varo}
Nicolas~Th. Varopoulos, \emph{Small time {G}aussian estimates of heat diffusion
  kernels. {I}. {T}he semigroup technique}, Bull. Sci. Math. \textbf{113}
  (1989), no.~3, 253--277. \MR{1016211}

\end{thebibliography}
\end{document}